\numberwithin{equation}{section}
\theoremstyle{plain}
\newtheorem{theorem}[equation]{Theorem}
\newtheorem{lemma}[equation]{Lemma}
\newtheorem{corollary}[equation]{Corollary}
\theoremstyle{definition}
\newtheorem*{acknowledgment}{Acknowledgment}
\theoremstyle{remark}
\newtheorem{remark}[equation]{Remark}
\newcommand{\dv}{\operatorname{div}}
\newcommand{\mysection}[1]{\section{#1}
\setcounter{equation}{0}}
\newcommand{\bR}{\mathbb R}
\newcommand{\bH}{\mathbb H}
\newcommand{\bP}{\mathbb P}
\newcommand{\bQ}{\mathbb Q}
\newcommand{\bN}{\mathbb N}
\newcommand{\bZ}{\mathbb Z}
\newcommand\cH{\mathcal{H}}
\newcommand\cL{\mathcal{L}}
\newcommand\cP{\mathcal{P}}
\newcommand\hd{\hat{\delta}}
\newcommand\hp{\hat{p}}
\newcommand\tD{\tilde{D}}
\newcommand\tv{\tilde{v}}
\newcommand\tu{\tilde{u}}
\newcommand\tf{\tilde{f}}
\newcommand\tT{\tilde{T}}
\newcommand{\set}[1]{\left\{#1\right\}}
\newcommand{\norm}[1]{\lVert#1\rVert}
\newcommand{\Norm}[1]{\left\lVert#1\right\rVert}
\newcommand{\Abs}[1]{\left\lvert#1\right\rvert}
\newcommand{\abs}[1]{\lvert#1\rvert}
\renewcommand{\epsilon}{\varepsilon}
\renewcommand{\vec}[1]{\boldsymbol{#1}}
\renewcommand{\qedsymbol}{$\blacksquare$}
\begin{document}
\title[Partial Schauder estimates]
{Partial Schauder estimates for second-order elliptic and parabolic equations: a revisit}

\author[H. Dong]{Hongjie Dong}
\address[H. Dong]{Division of Applied Mathematics, Brown University,
182 George Street, Providence, RI 02912, United States of America}
\email{Hongjie\_Dong@brown.edu}
\thanks{H. Dong was partially supported by the NSF under agreements DMS-1056737 and DMS-1600593.}

\author[S. Kim]{Seick Kim}
\address[S. Kim]{Department of Mathematics, Yonsei University, 50 Yonsei-ro, Seodaemun-gu, Seoul 03722, Republic of Korea}
\email{kimseick@yonsei.ac.kr}
\thanks{S. Kim is partially supported by NRF-2014R1A1A2056839.}

\subjclass[2010]{35B45, 35J15, 35K10}

\keywords{partial Schauder estimates, second-order elliptic equations, second-order parabolic equations.}

\begin{abstract}
Under various conditions, we establish Schauder estimates for both divergence and non-divergence form second-order elliptic and parabolic equations involving H\"older semi-norms not with respect to all, but only with respect to some of the independent variables.
A novelty of our results is that the coefficients are allowed to be merely measurable with respect to the other independent variables.
\end{abstract}

\maketitle

\mysection{Introduction}

The classical Schauder theory was established by J. Schauder about eighty years ago and since then plays an important role in  the existence theory for linear and non-linear elliptic and parabolic equations.
Roughly speaking, the Schauder theory for second-order elliptic equations in non-divergence (or divergence) form says that if all the coefficients and data are H\"older continuous in all variables, then the same holds for the second (or the first, respectively) derivatives of the solution.
For second-order parabolic equations in non-divergence (or divergence) form, the Schauder theory reads that  if the coefficients and data are H\"older continuous in both space and time variables, then the same holds for the second spatial derivatives and the first time derivative of the solution (or the solution itself and the first spatial derivatives, respectively).
Such results were proved both in the interior of the domain and near the boundary with appropriate boundary conditions, as well as for higher-order equations and systems. See, for instance, \cite{ADN64}.

For higher-order elliptic equations with smooth coefficients, by using the potential theory as in \cite{ADN64}, P. Fife \cite{Fife} established certain Schauder estimates involving H\"older semi-norms not with respect to all, but only with respect to some of the independent variables.
It was also observed by B. Knerr \cite{Knerr} and G. Lieberman \cite{Lieb92} that, for second-order parabolic equations in both divergence and non-divergence form, the regularity assumption on the coefficients and data with respect to the time variable can sometimes be dropped, which is also known as {\em intermediate} Schauder theory.
The proofs in \cite{Knerr} are based on the maximum principle, while in \cite{Lieb92} both the maximum principle and the Campanato's approach are used. See \cite{Lorenzi,KrPr} and references therein for other more recent results in this direction.
These are some earlier work on what we shall hereafter refer to \textit{partial Schauder estimates}, which is the subject of the current paper. Partial Schauder estimates have attracted many attentions due to their important applications, for instance, in problems arising from linearly elastic laminates and composite materials (cf. \cite{CKV,LV00,LN03}).

To be more precise, we first fix some related notation.
Let $x=(x^1,\ldots, x^d)$ be a point in $\bR^d$, with $d\ge 2$, and $q$ be an integer such that $1\le q <d$.
We distinguish the first $q$ coordinates of $x$ from the rest and write $x=(x',x'')$, where
\[
x'=(x^1,\ldots,x^q)\quad \text{and}\quad x''=(x^{q+1},\ldots,x^d).
\]
Roughly speaking, $x'$ denotes ``good'' coordinate variables while $x''$ represents ``bad'' coordinate variables.
For a function $u$ on a domain $\Omega\subset\bR^d$, naturally we define a \textit{partial H\"older semi-norm} with respect to $x'$ as
\[
[u]_{x',\delta;\,\Omega}:=\sup_{\substack{(x',x''), \,(y',x'') \in \Omega \\x' \neq  y'}} \frac{\abs{u(x',x'')-u(y',x'')}}{\abs{x'-y'}^\delta}.
\]
Throughout this article, we assume $0<\delta<1$ unless explicitly otherwise stated.

Let us mention some recent work on partial Schauder estimates in \cite{DK11,TW,Dong12,JLW14}, which are closely related the current paper. In \cite{DK11}, we considered both divergence and non-divergence form second-order scalar elliptic and parabolic equations.
Among other results, we proved that if the coefficients are independent of $x'$ and the data are H\"older continuous with respect to $x'$, then derivatives of solutions with respect to $x'$ are H\"older continuous in $x'$.
By using a different method, G. Tian and X.-J. Wang \cite{TW} proved similar results for non-divergence form elliptic equations with data Dini continuous in some variables.
Under certain conditions, their results also extend to second-order fully nonlinear equations.
We note that for non-divergence form equations, in both \cite{DK11} and \cite{TW} the coefficients are assumed to be continuous in $x$, even though the estimates are independent of the moduli of continuity with respect to $x$.
In \cite{Dong12}, the first named author studied second-order divergence form elliptic and parabolic systems as well as non-divergence scalar equations, with coefficients and data H\"older or Dini continuous in the time variable and all but one spatial variable, i.e., $q=d-1$.
In particular, it is proved that for non-divergence form equations if the coefficients and data are Dini continuous in $z':=(t,x')$ and merely measurable in $x^d$, then any solution $u$ is $C^1$ in $t$, $C^{1,1}$ in $x$, and $u_t$ and $D_{xx'} u$ are continuous.
Under the stronger condition that the coefficients and data are H\"older continuous in $z'$, $u_t$ and $D_{xx'} u$ are H\"older continuous in all variables.
In a very recent paper \cite{JLW14}, Y. Jin, D. Li, and X.-J. Wang obtained the following results for non-divergence form elliptic equations: If the coefficients are independent of a direction $\xi$ and the data is analytic in $\xi$, then any strong solution is analytic in $\xi$; if the leading coefficients are continuous, and the coefficients and data are analytic in a direction $\xi$, then any strong solution is analytic in $\xi$; if the leading coefficients are continuous, and the coefficients and data are H\"older continuous in $\xi$, then for any strong solution $u$, $D_{x\xi} u$ is H\"older continuous.

The main objective of this paper is to study the regularity of solutions for both divergence and non-divergence form elliptic and parabolic equations when the coefficients are merely {\em measurable} in the ``bad'' directions, which are allowed to be more than one.
We give a brief account of our main results as follows.
In the elliptic case, we assume that data is H\"older continuous in $x'$ and treat the following three classes of coefficients:
\begin{enumerate}[1.]
\item
The coefficients are independent of $x'$ and with no regularity assumption with respect to $x''$.
\item
The coefficients are H\"older continuous in $x'$ and with no regularity assumption with respect to $x''$.
\item
The coefficients are uniformly continuous in $(x^1,\ldots,x^{d-1})$, merely measurable in $x^d$, and H\"older continuous in $x'$.
\end{enumerate}
In the first case, we show that for any $W^2_d$ strong solution (or  $W^1_2$ weak solution) $u$ of non-divergence (or divergence) form equations, $D_{x'}^2 u$ (or $D_{x'} u$, respectively) are H\"older continuous in $x'$.
This result improves the aforementioned results in \cite{DK11,TW} by removing the continuity condition with respect to $x$ (see Theorem \ref{thm1}).
The main novelty of our paper lies in the second case, in which we prove that for any $W^2_p$ strong solution (or any $W^1_p$ weak solution) with a sufficient large $p$, $D_{x'}^2 u$ (or $D_{x'} u$) are H\"older continuous in {\em all} the variables (see Theorem \ref{thm4m}).
In the particular case when $q=d-2$, one can actually estimate the H\"older norm of $D_{xx'}u$ for non-divergence form equations, and the H\"older norm of $D_{x'}u$ for divergence form strongly elliptic systems (see Theorem \ref{thm4mn}).
In the last case, we show that for any strong solution (or any  weak solution), $D_{xx'} u$ (or $D_{x'} u$) are H\"older continuous in all the variables (see Theorem \ref{thm4}).
Analogous results for parabolic equations are also established.

As mentioned before, for non-divergence elliptic equations, an estimate similar to Theorem \ref{thm4} was recently proved in \cite{JLW14}.
Compared to \cite{JLW14}, our proof is technically different and we also obtain a sharper H\"older exponent.

Our proofs are all based on the Campanato's approach, but with various techniques in the three different cases.
They work equally well for both divergence and non-divergence elliptic and parabolic equations.
Let us give a short descriptions of the proofs in the elliptic case.
For Theorem \ref{thm1}, we mainly follow the outline of the argument in \cite{DK11}, which in turn adopts an idea of decomposition by M. V. Safonov and the mollification method of N. Trudinger.
In the proof, we emphasize how to use an approximation argument to remove the continuity condition and also how to localize the estimates by using an iteration argument.
The main idea of the proof of Theorem \ref{thm4m} is to apply an $L_\varepsilon$ version of Campanato's characterization of H\"older continuous functions (cf. Lemma \ref{lemma6.14}).
To the best of our knowledge, such application is new, as the Campanato's approach is usually used in the $L_p$ setting for $p\ge 1$ (mostly in the case $p=2$ or $p=\infty$).
The proof also relies on the Krylov--Safonov estimate, the De Giorgi--Nash--Moser estimate, and an $W^2_\varepsilon$ estimate due to F. Lin \cite{Li86}.
In the proof of Theorem \ref{thm4mn}, we exploit the reverse H\"older's inequality for elliptic systems and a recent result in \cite{DoKr10} by the first named author and N. V. Krylov about the $W^2_{2+\varepsilon}$ estimates for non-divergence form elliptic equations with coefficients measurable in two directions.
Finally, for Theorem \ref{thm4} we appeal to some recent work in \cite{KK07, DK09, Dong12b} on $W^2_p$ (or $W^1_p$) estimates for elliptic equations with coefficients measurable in one direction, from which we obtain an interior $C^{1,\alpha}$ (or $C^\alpha$) estimate for solutions to homogeneous equations.

The organization of this paper is as follows.
We state our main results for elliptic equations in Section \ref{sec:m}.
Section \ref{sec:e} is devoted to the proofs of these results.
The main results for parabolic equations are stated in Section \ref{sec:p} and their proofs are given in Section \ref{sec:pp}, where we also use a special type of interpolation inequalities proved in the Appendix for parabolic H\"older semi-norms, which might be of independent interest.
\newpage

\mysection{Main Results for elliptic equations}		\label{sec:m}

We consider elliptic operators in non-divergence form
\[
Lu:=a^{ij}(x)D_{ij}u,
\]
and in divergence form
\[
\cL u:=D_i(a^{ij}(x)D_j u),
\]
where the coefficients $a^{ij}(x)$ are measurable functions on $\bR^d$ satisfying the uniform ellipticity condition
\begin{equation}		\label{elliptic}
\nu \abs{\xi}^2\le a^{ij}(x) \xi^i\xi^j\le \nu^{-1} \abs{\xi}^2,\quad\forall x\in\bR^{d},\,\, \xi\in \bR^d,
\end{equation}
for some constant $\nu\in (0,1]$.
We assume the symmetry of the coefficients (i.e., $a^{ij}=a^{ji}$) for the operators $L$ in non-divergence form.
For the operators $\cL$ in divergence form, we do not require coefficients to be symmetric but instead assume that they are bounded; to avoid introducing a new constant, we simply assume that $\sum_{i,j=1}^d \abs{a^{ij}(x)}^2 \le \nu^{-2}$ for all $x \in \bR^d$ instead of the second inequality in \eqref{elliptic}.

For  $k=1,2,\ldots$, we set
\[
 [u]_{x',k+\delta;\,\Omega}=[D_{x'}^k u]_{x',\delta;\,\Omega}=\max_{\alpha\in\bZ_+^q,\, \abs{\alpha}=k}[\tD^\alpha u]_{x',\delta; \,\Omega},
\]
where we used the usual multi-index notation and $\tD^\alpha:=D_1^{\alpha_1} \cdots D_q^{\alpha_q}$.
We also use the notation
\[
\abs{u}_{0;\, \Omega}= \sup_{\Omega}\, \abs{u}.
\]
When $\Omega=\bR^d$, we will drop the reference to the domain and simply write
\[
[u]_{x', \delta}=[u]_{x',\delta; \,\bR^d},\;\text{ etc.}
\]
We denote $C^{k}_{x'}(\Omega)$ the set of all bounded measurable functions $u$ on $\Omega$ whose derivatives $\tilde{D}^\alpha u$ for $\alpha\in \bZ_+^q$ with $\abs{\alpha}\le k$ are continuous and bounded in $\Omega$.
We denote by $C^{k+\delta}_{x'}(\Omega)$ the set of all functions $u\in C^{k}_{x'}(\Omega)$ for which the partial H\"older semi-norm $[u]_{x', k+\delta; \,\Omega}$ is finite.
We use the notation $W^k_p(\Omega)$, $k=1,2,\ldots$, for the Sobolev spaces in $\Omega$.

For $p\in (1,\infty)$, we say that $u\in W^2_{p;\, loc}(\Omega)$ is a strong solution of $Lu=f$ in $\Omega$ if $u$ satisfies the equation $Lu=f$ a.e. in $\Omega$.
In the first theorem below, we assume that $a^{ij}$ are independent of $x'$ and merely measurable in $x''$.
We denote by $B_r(x_0)$ the Euclidean ball with radius $r$ centered at $x_0$.
When the center is the origin, we simply write $B_r$ for $B_r(0)$.

\begin{theorem}
                                    \label{thm1}
Assume that  $a=[a^{ij}]$ are independent of $x'$.
\begin{enumerate}[(i)]
\item
Let $u\in W^2_{d;\,loc}$ be a bounded strong solution of the equation
\[
L u=f \quad \text{in }\,\bR^d,
\]
where $f \in C^\delta_{x'}$.
Then $u\in C^{2+\delta}_{x'}$ and there is a constant $N=N(d,q,\nu, \delta)$ such that
\begin{equation}
                                            \label{eq3.58}
[u]_{x',2+\delta} \le N[f]_{x',\delta}.
\end{equation}

\item
Let $u\in W^2_{d;\,loc}(B_1)$ be a bounded strong solution of the equation
\[
L u=f \quad \text{in }\,B_1,
\]
where $f \in C^\delta_{x'}(B_1)$. Then $u\in C^{2+\delta}_{x'}(B_{1/2})$ and there is a constant $N=N(d,q,\nu, \delta)$ such that
\begin{equation}
                                            \label{eq3.58b}
[u]_{x',2+\delta; \,B_{1/2}}\le N \left([f]_{x',\delta; \,B_1}+ \abs{u}_{0; \,B_1} \right).
\end{equation}

\item
Let $u \in W^1_2(B_1)$ be bounded weak solution of the equation
\[
\cL u = \dv \vec f\quad \text{in }\, B_1
\]
where $\vec f = (f^1, \ldots, f^d) \in C^\delta_{x'}(B_1)$. Then $u\in C^{1+\delta}_{x'}(B_{1/2})$ and there is a constant $N=N(d,q,\nu, \delta)$ such that
\[
[u]_{x',1+\delta; \, B_{1/2}}\le N \left([\vec f]_{x',\delta; \,B_1}+ \abs{u}_{0; \,B_1} \right).
\]
\end{enumerate}
\end{theorem}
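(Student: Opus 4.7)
The plan is to adapt the Campanato--Safonov--Trudinger scheme of \cite{DK11}, exploiting that because $a=a(x'')$ depends only on the ``bad'' variables, the operator $L$ commutes with translation and convolution in $x'$. Accordingly, one first mollifies $u$ and $f$ in $x'$ only: if $\zeta_\varepsilon(x')$ is a standard mollifier on $\bR^q$ and $u_\varepsilon(x):=\int u(x'-y',x'')\zeta_\varepsilon(y')\,dy'$, then $Lu_\varepsilon=f_\varepsilon$ with $[f_\varepsilon]_{x',\delta}\le[f]_{x',\delta}$, while each $\tD^\alpha u_\varepsilon$ is smooth in $x'$ and satisfies $L(\tD^\alpha u_\varepsilon)=\tD^\alpha f_\varepsilon$. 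Simultaneously we approximate $a$ by smooth coefficients $a^\varepsilon$ mollified in $x''$; since the target inequality depends only on $d,q,\nu,\delta$, the limit $\varepsilon\downarrow 0$ can be taken at the end via $W^2_d$ weak compactness, and this is precisely where the improvement over \cite{DK11,TW} (removal of any continuity assumption on $a$) is gained.

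For the core decay, fix $x_0=(x_0',x_0'')$ and $R>0$ and perform a Safonov-type splitting: write $f(x)=f(x_0',x'')+g(x)$ with $\abs{g(x)}\le[f]_{x',\delta}\abs{x'-x_0'}^\delta$, let $w$ solve $Lw=f(x_0',\,\cdot\,)$ in $B_R(x_0)$ with $w=u$ on $\partial B_R(x_0)$, and set $v=u-w$. Then $v|_{\partial B_R(x_0)}=0$ and $Lv=g$, so the Alexandrov--Bakelman--Pucci estimate yields $\abs{v}_{0;B_R(x_0)}\le NR^{2+\delta}[f]_{x',\delta}$. For $w$, since both $a$ and the right-hand side $f(x_0',\,\cdot\,)$ are independent of $x'$, every $\tD^\alpha w$ with $|\alpha|\ge 1$ also solves the homogeneous equation $L(\tD^\alpha w)=0$. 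Iterated application of the Krylov--Safonov H\"older estimate (to $\tD^\alpha w$ for $|\alpha|$ as large as needed) produces interior $C^{\alpha_0}$ bounds on all $x'$-derivatives of $w$, with scale-invariant constants. After subtracting off a suitable polynomial in $x'$ so that the bound is expressed in terms of partial oscillations of $\tD^\beta u$ rather than $L^\infty$ norms of $u$, one obtains a Campanato-type decay
\[
\operatorname{osc}_{B_\rho(x_0),\,x'}\tD^\beta u \le N(\rho/R)\operatorname{osc}_{B_{R/2}(x_0),\,x'}\tD^\beta u + NR^\delta[f]_{x',\delta} \qquad (|\beta|=2,\ 0<\rho\le R/2),
\]
the linear factor $\rho/R$ being afforded by running Krylov--Safonov one order higher in $x'$. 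Standard iteration over dyadic scales then yields $\operatorname{osc}_{B_r(x_0),\,x'}\tD^\beta u\le Nr^\delta[f]_{x',\delta}$, which by the Campanato characterization of partial H\"older continuity is equivalent to the global bound \eqref{eq3.58}, completing (i).

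The interior statement (ii) is obtained by running the same decay on balls $B_r(x_0)\subset B_1$ together with a cutoff-and-interpolation argument that absorbs intermediate norms of $u$ into the left-hand side, producing the additional $\abs{u}_{0;B_1}$ term. The divergence form part (iii) follows the identical outline with three substitutions: ABP is replaced by the Caccioppoli energy estimate, Krylov--Safonov by De Giorgi--Nash--Moser, and $\tD^\alpha$ is taken with $|\alpha|=1$. The main obstacle is the bootstrap in the estimate for $w$: since $a$ is only bounded measurable in $x''$, no classical $C^{2,\alpha}$ Schauder theory is available, and one must exploit repeatedly the fact that every $x'$-derivative of $w$ again solves the homogeneous equation so that Krylov--Safonov (resp.\ De Giorgi--Nash--Moser) can be invoked on those derivatives. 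A secondary technical difficulty is the careful choice of the subtraction polynomial, together with the iterative absorption argument needed to pass from the global estimate to the localized form (ii).
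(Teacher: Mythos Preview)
Your overall architecture is right, and you correctly identify the two key ingredients: the ABP estimate for the ``bad'' piece and the iterated Krylov--Safonov estimate in $x'$ for the ``good'' piece, the latter being possible precisely because $a=a(x'')$ so that $x'$-difference quotients of $w$ satisfy the homogeneous equation. The approximation of $a$ by smooth $a^\varepsilon(x'')$ and the passage to the limit is also exactly how the paper removes the continuity hypothesis of \cite{DK11,TW}.

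However, there is a real gap at the step you gloss over as ``after subtracting off a suitable polynomial in $x'$ so that the bound is expressed in terms of partial oscillations of $\tD^\beta u$ rather than $L^\infty$ norms of $u$.'' In your decomposition $u=w+v$, the ABP estimate controls only $\abs{v}_{0;B_R}$, not $\abs{D^2_{x'}v}_{0;B_{R/2}}$; and no such bound on $D^2_{x'}v$ is available, because $Lv=g$ with $g$ merely $C^\delta_{x'}$, so differentiating the equation twice in $x'$ is illegitimate. Consequently you cannot pass from $\operatorname{osc}_{B_{R/2}}D^2_{x'}w$ to $\operatorname{osc}_{B_{R/2}}D^2_{x'}u$ to close your claimed Campanato decay. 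Nor does subtracting an element of $\tilde\bP_2$ help: if the coefficients of $P$ depend on $x''$ (which is needed to make $\abs{u-P}_{0;B_R}\le NR^{2+\delta}[u]_{x',2+\delta}$), then $LP$ acquires $x'$-dependence through cross-derivative terms, destroying the ``$\tD^\alpha w$ solves the homogeneous equation'' mechanism.

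The paper resolves exactly this difficulty with Trudinger's partial mollification: one solves $L_nw=0$ with boundary data $u-\tilde u^{\kappa r}$ rather than $u$. Because $L$ commutes with $x'$-mollification one has $L_n\tilde u^{\kappa r}=\tilde f_n^{\kappa r}$ exactly, and because $\tilde u^{\kappa r}$ is built from $u$ itself one gets $\abs{u-\tilde u^{\kappa r}}_{0}\le N(\kappa r)^{2+\delta}[u]_{x',2+\delta}$ on the boundary. This simultaneously makes $w$ satisfy the \emph{homogeneous} equation (so the $D^3_{x'}w$ bound applies) and makes $\abs{w}_{0;B_{\kappa r}}$ scale like $(\kappa r)^{2+\delta}[u]_{x',2+\delta}$ rather than $\abs{u}_0$. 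The argument then proceeds at the level of $\inf_{p\in\tilde\bP_2}\abs{u-p}_{0;B_r}$ and Krylov's polynomial characterization of $C^{2+\delta}_{x'}$, never needing pointwise control of $D^2_{x'}v$. Your sketch would be repaired by replacing the unspecified ``suitable polynomial'' with $\tilde u^{\kappa r}$ and reorganizing the decay accordingly; the localization in (ii) is then an iteration over dyadic annuli rather than a cutoff-and-interpolation step.
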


\begin{remark}
Under the additional assumption that $a^{ij}$ are uniformly continuous with respect to $x''$, results similar to Theorem \ref{thm1} were proved in \cite{DK11} and \cite{TW}.
\end{remark}

In the next theorem, we assume that $a^{ij}$ are H\"older continuous in $x'$ and merely measurable in $x''$.

\begin{theorem}
                                    \label{thm4m}
Let $\delta\in (0,1]$ and $p\in (d,\infty)$ be such that $\delta-d/p>0$.
Assume that $a=[a^{ij}]$ are $\delta$-H\"older continuous in $x'$ and merely measurable in $x''$.
Then, there exists a constant $\delta_0=\delta_0(d,\nu) > 0$ such that the following assertions hold with any $\hd \in (0, \delta_0)$ satisfying $\hd \le \delta-d/p$.
\begin{enumerate}[(i)]
\item
Let $u\in W^2_{p}(B_1)$ be a strong solution of the equation
\[
L u=f \quad \text{in }\,B_1,
\]
where $f \in  C^{\hd}_{x'}(B_1)$. Then $D^2_{x'}u\in C^{\hd}(B_{1/2})$ and there is a constant $N=N(d, q, \nu, p, \delta)$ such that
\begin{equation}
                                            \label{eq3.59}
[D^2_{x'}u]_{\hd; \,B_{1/2}}\le N \left([f]_{x',\hd;\, B_1}+ \left(1+ [a]_{x', \delta; \,B_1}\right) \norm{D^2 u}_{L_p(B_1)} \right).
\end{equation}

\item
Let $u\in W^1_{p}(B_1)$ be a weak solution of the equation
\[
\cL u=\dv \vec f \quad \text{in }\,B_1,
\]
where $\vec f= (f^1,\ldots,f^d) \in C^{\hd}_{x'}(B_1)$.
Then $D_{x'}u\in C^{\hd}(B_{1/2})$ and there is a constant $N=N(d, q, \nu, p, \delta)$ such that
\begin{equation}
                                            \label{eq4.00}
[D_{x'}u]_{\hd; \,B_{1/2}}\le N \left([\vec f]_{x',\hd; \,B_1}+\left(1+ [a]_{x', \delta; \,B_1}\right) \norm{Du}_{L_p(B_1)} \right).
\end{equation}
\end{enumerate}
\end{theorem}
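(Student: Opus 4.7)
The plan is to establish an $L_\epsilon$ Campanato-type decay for oscillations of $D^2_{x'} u$ on balls centered in $B_{1/2}$, and then invoke the $L_\epsilon$ version of Campanato's characterization (Lemma \ref{lemma6.14}) to upgrade this to the pointwise Hölder estimate \eqref{eq3.59}. I focus on part (i); part (ii) follows the same scheme with $D_{x'} u$ in place of $D^2_{x'} u$, the De Giorgi--Nash--Moser estimate replacing Krylov--Safonov, and a Meyers-type reverse Hölder inequality replacing Lin's $W^2_\epsilon$.

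Fix $x_0\in B_{1/2}$ and a small $r>0$. I freeze the coefficients and the right-hand side in the $x'$-variable at $x_0'$ by setting $\bar a^{ij}(x''):=a^{ij}(x_0',x'')$, $\bar f(x''):=f(x_0',x'')$, and $\bar L:=\bar a^{ij}D_{ij}$. Via smooth approximation of $\bar a$, I let $w$ solve $\bar L w=\bar f$ in $B_r(x_0)$ with $w=u$ on $\partial B_r(x_0)$ and put $v:=u-w$, so that
\[
\bar L v=(f-\bar f)+(\bar a^{ij}-a^{ij})D_{ij}u\quad\text{in }B_r(x_0),\qquad v=0\text{ on }\partial B_r(x_0).
\]
Because $\bar a$ and $\bar f$ depend only on $x''$, for every multi-index $\alpha\in\bZ_+^q$ with $|\alpha|\le 2$ the derivative $\tilde D^\alpha w$ is a strong solution of the homogeneous equation $\bar L(\tilde D^\alpha w)=0$. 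Applying the $L_\epsilon$ form of the Krylov--Safonov Hölder estimate (obtained by combining the standard Krylov--Safonov $C^{\delta_0}$ estimate with the local maximum principle for homogeneous non-divergence equations), I get, for some $\delta_0=\delta_0(d,\nu)>0$ and all $\theta\in(0,1/4)$,
\[
\inf_{c\in\bR}\Bigl(\fint_{B_{\theta r}(x_0)}|D^2_{x'}w-c|^\epsilon\Bigr)^{1/\epsilon}
\le N\theta^{\delta_0}\Bigl(\fint_{B_{r/2}(x_0)}|D^2_{x'}w-(D^2_{x'}w)_{B_{r/2}}|^\epsilon\Bigr)^{1/\epsilon}.
\]

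For $v$, I invoke Lin's $W^2_\epsilon$ estimate (together with ABP, properly scaled) to obtain an $L_\epsilon$ bound for $D^2 v$ on $B_r(x_0)$ in terms of an integral norm of $\bar L v$. Using the $x'$-Hölder regularity of $f$ and $a$, I have
\[
|\bar L v(x)|\le [f]_{x',\hat\delta;B_1}|x'-x_0'|^{\hat\delta}+[a]_{x',\delta;B_1}|x'-x_0'|^\delta|D^2 u(x)|\quad\text{on }B_r(x_0),
\]
and Hölder's inequality together with the gap condition $\hat\delta\le\delta-d/p$ yields
\[
\Bigl(\fint_{B_r(x_0)}|D^2 v|^\epsilon\Bigr)^{1/\epsilon}\le N r^{\hat\delta}K,\qquad K:=[f]_{x',\hat\delta;B_1}+(1+[a]_{x',\delta;B_1})\|D^2 u\|_{L_p(B_1)}.
\]
Combining the two displays with $u=v+w$ and the triangle inequality in $L_\epsilon$, I arrive at the iteration
\[
\phi(x_0,\theta r)\le N\theta^{\delta_0}\phi(x_0,r)+N\theta^{-d/\epsilon}r^{\hat\delta}K,
\]
where $\phi(x_0,\rho):=\inf_c(\fint_{B_\rho(x_0)}|D^2_{x'}u-c|^\epsilon)^{1/\epsilon}$. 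Since $\hat\delta<\delta_0$, a suitable choice of $\theta$ and a standard geometric iteration in $r$ give $\phi(x_0,r)\le N r^{\hat\delta}K$ uniformly in $x_0\in B_{1/2}$ and small $r$, and Lemma \ref{lemma6.14} then yields \eqref{eq3.59}.

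The most delicate points I anticipate are two. First, justifying the $L_\epsilon$ form of the Krylov--Safonov estimate applied to the $x'$-derivatives of $w$: since the coefficients of $\bar L$ are only measurable in $x''$, one cannot differentiate classically, and one must instead work with difference quotients and pass to the limit using Lin's estimate to stay inside the $L_\epsilon$ scale. Second, the matching between $\hat\delta$ and $\delta-d/p$ is what allows the uncontrolled factor $D^2 u$ appearing in $\bar L v$ (which one only knows to lie in $L_p$) to be absorbed into an $r^{\hat\delta}$ estimate after a Hölder interpolation; this is precisely the origin of the hypothesis $\hat\delta\le\delta-d/p$ in the statement.
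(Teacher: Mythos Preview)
Your proposal is correct and follows essentially the same approach as the paper: freeze the coefficients in $x'$, decompose $u$ into a piece whose $x'$-derivatives satisfy a homogeneous frozen equation (controlled by Krylov--Safonov at the $L_\epsilon$ scale) and a correction with zero boundary data (controlled by Lin's $W^2_\epsilon$ estimate), iterate the resulting Campanato-type inequality, and conclude via Lemma~\ref{lemma6.14}. The only cosmetic differences are that the paper swaps the roles of your $v$ and $w$ (it solves the perturbed equation with zero boundary and lets the difference satisfy the frozen one, which streamlines the solvability step via approximation by continuous coefficients), and for part~(ii) it uses the plain $W^1_2$ energy estimate rather than a Meyers-type inequality, which already suffices.
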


\begin{remark}
For non-divergence elliptic equations, an estimate similar to \eqref{eq3.59} was obtained in \cite{TW} under the assumption that $a^{ij}$ are independent of $x'$ and continuous in $x''$.
\end{remark}

Our next result is regarding the special case when $q=d-2$.
\begin{theorem}
                                    \label{thm4mn}
Assume $d\ge 3$ and $q=d-2$ so that $x''=(x^{d-1}, x^d)$.
Let $\delta\in (0,1]$ and $p\in (d,\infty)$ be such that $\delta-d/p>0$.
Assume $a=[a^{ij}]$ are $\delta$-H\"older continuous in $x'$ but merely measurable in $x''$.
Then, there exists a constant $\delta_0=\delta_0(d,\nu) > 0$ such that the following assertions hold with any $\hd \in (0, \delta_0)$ satisfying $\hd \le \delta-d/p$.
\begin{enumerate}[(i)]
\item
Let $u\in W^2_{p}(B_1)$ be a strong solution of the equation
\[
L u=f \quad \text{in }\,B_1,
\]
where $f \in C^{\hd}_{x'}(B_1)$. Then $DD_{x'}u\in C^{\hd}(B_{1/2})$ and there is a constant $N=N(d, \nu, p, \delta)$ such that
\begin{equation}
                                            \label{eq3.59mn}
[DD_{x'}u]_{\hd; \,B_{1/2}}\le N \left([f]_{x',\hd; \,B_1}+ \left(1+ [a]_{x', \delta; \,B_1}\right) \norm{D^2 u}_{L_p(B_1)} \right).
\end{equation}

\item
The statement of Theorem~\ref{thm4m} (ii) still holds for strongly elliptic systems.

\end{enumerate}
\end{theorem}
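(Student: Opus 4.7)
My plan is to follow the Campanato-style framework from the proof of Theorem \ref{thm4m}, with the key improvement that when $q = d-2$ the ``bad'' variables $x'' = (x^{d-1}, x^d)$ form exactly two directions, which unlocks stronger integrability estimates for the frozen equation than the $L_\varepsilon$ bounds used in the general case.

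For part (i), I would freeze the coefficients in the $x'$ direction at a base point $x_0$, producing $\bar{a}^{ij}(x'') := a^{ij}(x_0', x'')$ which are measurable in precisely two directions. For a strong solution $v$ of the frozen homogeneous equation $\bar{L} v = 0$, difference quotients in the $x'$ direction give that $D_{x'} v$ is again a strong solution (legitimate since $\bar a$ does not depend on $x'$). The Dong--Krylov $W^2_{2+\varepsilon}$ estimate in \cite{DoKr10}, which is tailored exactly for non-divergence equations with coefficients measurable in two directions, then yields local $L_{2+\varepsilon}$ control of $D^2(D_{x'} v)$, and in particular of the full gradient $D D_{x'} v$ rather than only $D^2_{x'} v$ as in Theorem \ref{thm4m}. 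The resulting mean-oscillation decay of $D D_{x'} v$ on shrinking balls, combined with the standard perturbation $L u - \bar{L} u = (a - \bar a) D^2 u = O\bigl([a]_{x',\delta}\, r^{\delta}\bigr) D^2 u$ on balls of radius $r$, can then be fed into the $L_\varepsilon$ Campanato characterization (Lemma \ref{lemma6.14}) to give H\"older continuity of $D D_{x'} u$ in all variables and the estimate \eqref{eq3.59mn}.

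For part (ii), the De Giorgi--Nash--Moser estimate is not available for general strongly elliptic systems, so I would replace it by Meyers' reverse H\"older inequality: any weak solution $v$ of the frozen homogeneous system $\bar{\cL} v = 0$ satisfies $D v \in L_{2+\varepsilon}$ locally, with norm controlled by $\norm{D v}_{L_2}$. Again, $D_{x'} v$ is itself a weak solution, so the same improvement applies to $D(D_{x'} v)$. This higher integrability plays the role that De Giorgi--Nash--Moser played in the scalar case of Theorem \ref{thm4m}(ii), and plugging it into the Campanato iteration together with the $L_{2+\varepsilon}$ mean-oscillation characterization yields the desired H\"older estimate for $D_{x'} u$ in all variables. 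The hypothesis $q = d-2$ is essential both for invoking \cite{DoKr10} in part (i) and, in part (ii), for ensuring that the two-dimensional $x''$ is compatible with the integrability gain provided by Meyers' estimate.

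The main obstacle I anticipate is the perturbation step: one must estimate the $L_{2+\varepsilon}$ norm of the error $(a - \bar a) D^2 u$ on small balls in terms of $[a]_{x',\delta;\, B_1}$ and $\norm{D^2 u}_{L_p(B_1)}$, and the required matching of exponents is what forces the restriction $\hd \le \delta - d/p$. A secondary subtlety in the systems case is that Meyers' reverse H\"older gain $\varepsilon$ is only qualitative, so the constant $\delta_0 = \delta_0(d,\nu)$ must be chosen to be compatible with this (possibly very small) gain, and the iteration has to be set up so that the final H\"older exponent $\hd < \delta_0$ absorbs this dependence cleanly.
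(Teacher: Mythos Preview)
Your overall strategy matches the paper's: freeze coefficients in $x'$, use \cite{DoKr10} (part (i)) or reverse H\"older (part (ii)) for the frozen homogeneous equation, and run the $L_\varepsilon$ Campanato iteration as in Theorem~\ref{thm4m}. However, there is a gap at the heart of the argument that the paper closes via a dedicated lemma (Lemma~\ref{lem:sss}) and that your outline does not address.

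The issue is this: the Dong--Krylov result gives $D^2(D_{x'}v)\in L_{p_0}$ locally for some $p_0=p_0(d,\nu)>2$, and Meyers gives $D(D_{x'}v)\in L_{p_0}$; but to feed into the Campanato iteration you need \emph{H\"older continuity} of $D(D_{x'}v)$ (part (i)) or of $D_{x'}v$ (part (ii)), i.e.\ a decay like $(r/R)^{d+\epsilon\delta_0}$ for the mean oscillation. Integrability in $L_{p_0}$ with $p_0$ only slightly larger than $2$ does not yield this in $\bR^d$ for $d\ge 3$. The paper's key observation is that since the frozen coefficients depend only on the \emph{two}-dimensional variable $x''$, one can bootstrap freely in $x'$ (take arbitrarily many $x'$-derivatives, each still a solution) and then apply the Morrey--Sobolev embedding in the $x''$ variable alone, where $p_0>2=\dim x''$ suffices for H\"older continuity. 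This combination---unlimited $x'$-regularity plus two-dimensional $x''$-embedding---is precisely what produces the estimate $[D\hat v]_{\delta_0}\le N\|\hat v\|_{L_2}$ (resp.\ $[\hat v]_{\delta_0}\le N\|\hat v\|_{L_2}$) with $\delta_0=1-2/p_0$, and is the substantive content of Lemma~\ref{lem:sss}. Your proposal alludes to the role of $q=d-2$ but does not identify this mechanism; without it, the passage from ``$L_{2+\varepsilon}$ control of $D^2(D_{x'}v)$'' to ``mean-oscillation decay of $DD_{x'}v$'' is unjustified.

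A second, smaller point: in part~(i) the paper still works in the $L_\varepsilon$ framework (using Lemma~\ref{lem4} for the correction $w$ and the Krylov--Safonov estimate to pass between $L_2$ and $L_\varepsilon$), and applies Poincar\'e to $\tilde v=\hat v-(\hat v)_{B_{R/2}}-\vec c\cdot(x-x_0)$ to extract the decay of $D\hat v$ from that of $\hat v$; your sketch should make this reduction explicit rather than jumping directly to $DD_{x'}u$.
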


We obtain better regularity for $u$ when the coefficients are assumed to be uniformly continuous in $(x^1,\ldots,x^{d-1})$, i.e., all but one independent variable.
\begin{theorem}
                                    \label{thm4}
Let $\delta\in (0,1]$ and $p\in (d,\infty)$ be such that $\hd:=\delta-d/p>0$.
Assume that $a=[a^{ij}]$ are uniformly continuous in $(x^1,\ldots,x^{d-1})$, merely measurable in $x^d$, and $\delta$-H\"older continuous in $x'$.
Let $\omega_{a}$ denote a modulus of continuity of $a=[a^{ij}]$ with respect to $(x^1,\ldots, x^{d-1})$.
\begin{enumerate}[(i)]
\item
Let $u\in W^2_{p}(B_1)$ be a strong solution of the equation
\[
L u=f \quad \text{in }\,B_1,
\]
where $f \in C^{\hd}_{x'}(B_1)$. Then $DD_{x'}u\in C^{\hd}(B_{1/2})$ and there is a constant $N$ depending only on $d$, $q$,  $\nu$, $p$,  $\delta$, and $\omega_a$ such that
\begin{equation}
                                            \label{eq4.11}
[DD_{x'}u]_{\hd; \,B_{1/2}}\le N \left([f]_{x',\hd; \,B_1}+ \left(1+ [a]_{x', \delta; \,B_1}\right) \norm{D^2 u}_{L_p(B_1)}\right).
\end{equation}

\item
Let $u\in W^1_{p}(B_1)$ be a weak solution of the equation
\[
\cL u=\dv \vec f \quad \text{in }\,B_1,
\]
where $\vec f =(f^1,\ldots, f^d) \in C^{\hd}_{x'}(B_1)$.
Then $D_{x'}u\in C^{\hd}(B_{1/2})$ and there is a constant $N$ depending only on $d$, $q$, $\nu$, $p$, $\delta$, and $\omega_a$ such that
\begin{equation}
                                            \label{eq4.12}
[D_{x'}u]_{\hd; \,B_{1/2}}\le N \left([\vec f]_{x',\hd; \,B_1}+ \left(1+[a]_{x', \delta; \,B_1}\right) \norm{Du}_{L_p(B_1)}\right).
\end{equation}
\end{enumerate}
\end{theorem}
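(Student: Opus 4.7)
The plan is to apply a Campanato mean-oscillation approach to $DD_{x'}u$ (for part (i)) and to $D_{x'}u$ (for part (ii)), comparing $u$ at each scale to a solution $v$ of an equation with coefficients frozen in the $(x^1,\ldots,x^{d-1})$-directions and forcing frozen in the $x'$-direction. The frozen coefficients depend only on $x^d$, so the homogeneous equation falls into the class for which \cite{KK07,DK09,Dong12b} supply $W^2_p$ (resp.\ $W^1_p$) estimates for any $p\in(1,\infty)$. Since the frozen forcing and coefficients are independent of $x'$, differentiating in $x'$ shows $D_{x'}v$ solves a homogeneous equation of the same type; combining with Morrey's embedding, one obtains interior $C^{1,\alpha}$ regularity of $D_{x'}v$ in case (i) and interior $C^\alpha$ regularity of $D_{x'}v$ in case (ii), for any $\alpha\in(0,1)$.

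More concretely for part (i), fix $x_0\in B_{1/2}$ and a small $r>0$. With $\hat x=(x^1,\ldots,x^{d-1})$ and $\hat x_0=(x_0^1,\ldots,x_0^{d-1})$, set $\bar a^{ij}(x^d):=a^{ij}(\hat x_0,x^d)$, $\bar L:=\bar a^{ij}D_{ij}$, and $\bar f(x):=f(x_0',x'')$. Let $v$ solve $\bar Lv=\bar f$ in $B_r(x_0)$ with $v=u$ on $\partial B_r(x_0)$, and put $w:=u-v$. Since $\bar f$ and $\bar a$ are independent of $x'$, $D_{x'}v$ satisfies $\bar L(D_{x'}v)=0$, and the interior regularity above yields
\[
\Psi(x_0,\theta r)\le N\theta^{\hd}\Psi(x_0,r),\quad \theta\in(0,1/4),
\]
for $\Psi$ the scale-invariant $L_p$ mean oscillation of $DD_{x'}v$. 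Meanwhile $\bar Lw=(f-\bar f)+(\bar a-a)^{ij}D_{ij}u$ with zero boundary data, and the $W^2_p$ estimate of \cite{KK07} gives
\[
\|D^2w\|_{L_p(B_r(x_0))}\le N\bigl(r^{\hd+d/p}[f]_{x',\hd;\,B_1}+\omega_a(r)\|D^2u\|_{L_p(B_r(x_0))}\bigr),
\]
using $|f-\bar f|\le r^{\hd}[f]_{x',\hd;\,B_1}$ and $|a-\bar a|\le\omega_a(r)$ on $B_r(x_0)$. Combining via $u=v+w$ produces the decay inequality
\[
\Phi(x_0,\theta r)\le N\theta^{\hd}\Phi(x_0,r)+N\theta^{-d/p}\bigl(r^{\hd}[f]_{x',\hd;\,B_1}+\omega_a(r)\|D^2u\|_{L_p(B_1)}\bigr)
\]
for $\Phi$ the analogous mean oscillation of $DD_{x'}u$.

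A standard Campanato iteration closes this decay inequality via a two-step smallness: first pick $\theta$ depending only on $d$ and $\nu$ so that $N\theta^{\hd}<1/2$, then shrink $r$ below a threshold $r_0$ on which $\omega_a(r)\le\theta^{\hd}$. This yields $\Phi(x_0,r)\le Nr^{\hd}(\cdots)$ for all small $r$, and \eqref{eq4.11} follows from the Campanato characterization of $C^{\hd}$. Part (ii) is handled identically: $D_{x'}v$ solves the homogeneous divergence-form equation $D_i(\bar a^{ij}D_j D_{x'}v)=0$, so by iteration of the $W^1_p$ estimate of \cite{DK09,Dong12b} and Morrey's embedding it is $C^\alpha$ for any $\alpha\in(0,1)$; the $W^1_p$ estimate also replaces the $W^2_p$ estimate in bounding $w$.

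The main obstacle is the absence of any quantitative rate on $\omega_a$, which precludes single-step geometric decay: the two-step smallness just described is the standard device used to compensate, and the dependence on $\omega_a$ is then absorbed into the constant $N$. A secondary point is the factor $1+[a]_{x',\delta;\,B_1}$ appearing on the right-hand side of \eqref{eq4.11} and \eqref{eq4.12}; this arises from a preliminary mollification of the coefficients in $x'$ (needed to legitimize the $x'$-differentiation inside the perturbation argument, as in the proof of Theorem~\ref{thm1}), with the mollified $D_{x'}a$ bounded by $[a]_{x',\delta;\,B_1}$ times a negative power of the mollification parameter. Optimization of that parameter produces precisely the stated factor.
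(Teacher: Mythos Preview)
Your decay inequality does not close. By freezing the coefficients in \emph{all} of $(x^1,\ldots,x^{d-1})$ rather than only in $x'$, the remainder $(\bar a-a)^{ij}D_{ij}u$ is controlled only by $\omega_a(r)$, which under the hypotheses of the theorem carries no quantitative rate: the coefficients are assumed merely uniformly continuous in $(x^{q+1},\ldots,x^{d-1})$, not H\"older. Thus in your displayed inequality the additive term $N\theta^{-d/p}\,\omega_a(r)\,\|D^2u\|_{L_p(B_1)}$ does not decay like $r^{\hd}$. Iterating at scales $r_0,\theta r_0,\theta^2 r_0,\ldots$ produces a geometric series in the contraction factor $N\theta^{\hd}<1/2$, but each step contributes the \emph{same} order $\omega_a(\theta^j r_0)\,\|D^2u\|_{L_p(B_1)}$; the sum is bounded yet does not tend to zero with the scale. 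You obtain $\Phi$ bounded, not $\Phi(r)\lesssim r^{\hd}$, so Campanato's characterization does not yield $C^{\hd}$ regularity. The ``two-step smallness'' device you invoke is designed for errors of the form $\epsilon\,\Phi(R)$ (as in Lemma~\ref{lem:giaq}), not for additive terms multiplied by the fixed quantity $\|D^2u\|_{L_p(B_1)}$; choosing $r_0$ so that $\omega_a(r_0)\le\theta^{\hd}$ makes the error small but still non-decaying. A concrete obstruction is $\omega_a(r)=1/\log(1/r)$.

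The paper's remedy is to freeze only in the $x'$-direction, setting the comparison coefficients to $a^{ij}(x_0',x'')$. The coefficient error is then $|a^{ij}(x_0',x'')-a^{ij}(x)|\le [a]_{x',\delta}\,r^{\delta}$, which has the required power decay and is precisely the origin of the factor $1+[a]_{x',\delta;\,B_1}$ in \eqref{eq4.11}--\eqref{eq4.12} (your attribution of this factor to a mollification of the coefficients is incorrect; no differentiation of $a$ in $x'$ is needed once the coefficients are frozen). The price is that the frozen coefficients now depend on $x''=(x^{q+1},\ldots,x^d)$, so both the Dirichlet solvability for $w$ and the interior $C^{1,\gamma}$ estimate for $\hat v=D_{x'}v$ must be supplied by the $W^2_p$ theory for operators with coefficients measurable in one direction and VMO (here, uniformly continuous) in the others; this is exactly the content of Lemmas~\ref{lem1}--\ref{lem3}, and it is where the dependence of $N$ on $\omega_a$ enters. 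Part~(ii) is corrected in the same way.
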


\begin{remark}
By the interior $L_p$ estimates established in \cite{KK07} and \cite{DK09}, in Theorem \ref{thm4} (i) we may assume that $u\in W^2_{\hat p}(B_1)$ for some small $\hat p>1$, and the term $\norm{D^2u}_{L_p(B_1)}$ on the right-hand side of \eqref{eq4.11} can be replaced by the sum of a weaker norm of $u$ and the $L_p$ norm of $f$; for example, we would obtain an estimate like
\[
[DD_{x'}u]_{\hd; \,B_{1/2}}\le N \left([f]_{x',\hd; \,B_1}+\norm{f}_{L_p(B_1)} + \norm{u}_{L_{\hat p}(B_1)} \right),
\]
where $N$ depends only on $d$, $q$, $\nu$, $p$, $\hat p$, $\delta$, $[a]_{x', \delta; \, B_1}$ and $\omega_a$.
Similarly, in Theorem \ref{thm4} (ii) we may assume that $u\in W^1_{\hat p}(B_1)$ for some small $\hat p>1$, and the term $\norm{Du}_{L_p(B_1)}$ on the right-hand side of \eqref{eq4.12} can be replaced by the sum of a weaker norm of $u$ and the $L_{p}$ norm of $\vec f$.
\end{remark}

\begin{remark}
For non-divergence elliptic equations, an estimate similar to \eqref{eq4.11} was recently shown in \cite{JLW14} by using a different proof.
It should be pointed out that in \cite{JLW14} it is assumed that $\hd$ is strictly less than $\delta-d/p$.
\end{remark}

\begin{remark}
We only consider operators without lower-order terms for the sake of simplicity of the presentation. In Section \ref{sec3.5}, we will illustrate how to extend our results to equations {\em with} lower-order terms.
\end{remark}

\mysection{The proofs: Elliptic estimates}		\label{sec:e}

\subsection{Proof of Theorem~\ref{thm1}}		\label{sec3.1}
We prove the theorem in essence by following M. V. Safonov's idea of applying equivalent norms and representing solutions as sums of ``small'' and smooth functions.
However, his argument as reproduced in the proof of \cite[Theorem 3.4.1]{Kr96} is not directly applicable in our case by several technical reasons and to get around this difficulty we also make use of the mollification method of Trudinger \cite{Trudinger}.

For a function $v$ defined on $\bR^d$ and $\epsilon>0$, we define a \textit{partial mollification} of $v$ with respect to the first $q$ coordinates $x'$ as
\begin{equation}		
				\label{eq:pamol}
\tv^\epsilon(x',x''):=\frac{1}{\epsilon^q}\int_{\bR^q}v(y',x'') \zeta\left(\frac{x'-y'}{\epsilon}\right)\,dy'=
\int_{\bR^q}v(x'-\epsilon y',x'')\zeta(y')\,dy',
\end{equation}
where $\zeta(x^1,\ldots,x^q)=\prod_{i=1}^q \eta(x^i)$ and $\eta=\eta(t)$ is a smooth function on $\bR$ with a compact support in $(-1,1)$ satisfying $\int \eta=1$, $\int t \eta\,dt=0$, and $\int t^2 \eta \,dt=0$.
We assume further that the support of $\eta$ is chosen so small that $\zeta \in C^\infty_c(B_1)$.
Then, by virtue of Taylor's formula, it is not hard to prove the following lemma for partial mollifications (see, e.g., \cite[Chapter 3]{Kr96}).

\begin{lemma}
                                    \label{lem12.25}
Let $\epsilon>0$ and $x_0\in \bR^d$.
\begin{enumerate}[(i)]
\item
Suppose $v\in C^{\delta}_{x'}(B_{\varepsilon}(x_0))$. Then we have
\[
\epsilon^{1-\delta}\abs{D_{x'} \tv^\epsilon(x_0)}+\epsilon^{2-\delta} \abs{D^{2}_{x'} \tv^\epsilon(x_0)} \le N(d,q,\delta,\eta) [v]_{x',\delta;\, B_{\varepsilon}(x_0)},
\]
\item
Suppose $v\in C_{x'}^{k+\delta}(B_\varepsilon(x_0))\,\,(k=0,1,2)$. Then we have
\[
\abs{v(x_0)-\tv^\epsilon(x_0)} \le N(d,q,\delta,\eta)\epsilon^{k+\delta}[v]_{x',k+\delta; \,B_{\varepsilon}(x_0)}.
\]
\end{enumerate}
\end{lemma}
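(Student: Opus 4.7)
The core idea is the familiar interplay between integration by parts on the mollifier side and Taylor expansion on the function side, exploiting the vanishing moments of $\eta$ (hence of $\zeta$).

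For part (i), the plan is to avoid differentiating $v$ by transferring derivatives onto $\zeta$. After a change of variables one has, for each multi-index $\alpha\in\bZ_+^q$ with $|\alpha|=1,2$,
\[
\tD^\alpha \tv^\epsilon(x_0)=\epsilon^{-|\alpha|}\int_{\bR^q} v(x_0'-\epsilon z',x_0'')(\tD^\alpha\zeta)(z')\,dz'.
\]
Because $\zeta\in C^\infty_c(B_1)$, integration by parts gives $\int \tD^\alpha\zeta\,dz'=0$ for $|\alpha|\ge 1$ and, for $|\alpha|=2$, also $\int z'^j\tD^\alpha\zeta\,dz'=0$ for every $j$. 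Thus one may freely subtract $v(x_0',x_0'')$ under the integral and estimate
\[
|\tD^\alpha\tv^\epsilon(x_0)|\le \epsilon^{-|\alpha|}[v]_{x',\delta;\,B_\epsilon(x_0)}\int |\epsilon z'|^\delta|\tD^\alpha\zeta(z')|\,dz'\le N\epsilon^{\delta-|\alpha|}[v]_{x',\delta;\,B_\epsilon(x_0)},
\]
which is the claim after multiplying by $\epsilon^{|\alpha|-\delta}$.

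For part (ii), the starting point is the identity
\[
\tv^\epsilon(x_0)-v(x_0)=\int_{\bR^q}\bigl[v(x_0'-\epsilon z',x_0'')-v(x_0',x_0'')\bigr]\zeta(z')\,dz',
\]
together with the Taylor expansion of $v(\cdot,x_0'')$ at $x_0'$ up to order $k$, whose remainder is controlled by $[v]_{x',k+\delta;\,B_\epsilon(x_0)}(\epsilon|z'|)^{k+\delta}$. For $k=0$ the estimate is immediate from the H\"older bound. For $k=1$ the linear term in $z'$ is killed by $\int z'^j\zeta(z')\,dz'=0$, which follows from $\int t\,\eta(t)\,dt=0$ and the product structure of $\zeta$. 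For $k=2$, one further needs $\int z'^j z'^l\zeta(z')\,dz'=0$ for all $j,l$; this is where the special normalization $\int t^2\eta\,dt=0$ enters, handling the diagonal terms $j=l$ (the off-diagonal terms vanish from $\int t\,\eta\,dt=0$). In all three cases the remainder integrates against $\zeta$ to yield the required $\epsilon^{k+\delta}$ factor.

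The argument is essentially mechanical; the only point requiring a brief comment is the step in (i) where derivatives are moved onto $\zeta$ without assuming any regularity of $v$ in $x'$, which is legitimate because the first form of \eqref{eq:pamol} expresses $\tv^\epsilon$ as a convolution in $x'$ with a smooth compactly supported kernel. No separate obstacle is anticipated; the only care needed is to keep track of the fact that we only mollify in the $x'$-variables, so that $x_0''$ remains fixed throughout and the H\"older seminorm $[v]_{x',k+\delta;\,B_\epsilon(x_0)}$ appears (rather than the full H\"older seminorm), since $|y'-x_0'|\le \epsilon$ when $\zeta((x_0'-y')/\epsilon)\ne 0$.
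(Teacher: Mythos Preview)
Your proposal is correct and is precisely the approach the paper intends: the paper does not spell out a proof but simply says the lemma follows ``by virtue of Taylor's formula'' and refers to \cite[Chapter~3]{Kr96}, which is exactly the mechanism you describe (transfer derivatives to $\zeta$ for (i), use the vanishing moments of $\zeta$ to kill the Taylor polynomial for (ii)). One minor remark: in part (i) the observation that $\int z'^j\tD^\alpha\zeta\,dz'=0$ for $|\alpha|=2$ is correct but unnecessary, since subtracting the constant $v(x_0',x_0'')$ already suffices and only requires $\int \tD^\alpha\zeta\,dz'=0$.
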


For $k=0,1,2,\ldots$, denote by $\tilde \bP_k$, which we shall refer to as the set of all $k$th-order partial polynomials in $x'$, the set of all functions $p=p(x',x'')$ on $\bR^d$ such that  $p(x',x'')$ is a polynomial of $x'\in \bR^q$ of degree at most $k$ for any $x''$.
We will also use the following notation for a partial Taylor's polynomial of order $k$ with respect to $x'$ of a function $v$ at a point $x_0'$:
\[
\tT^k_{x_0'}v(x',x''):=\sum_{\alpha\in \bZ_+^q,\, \abs{\alpha} \le k} \frac{1}{\alpha!} (x'-x_0')^\alpha
\tD^\alpha v(x_0',x'').
\]

First we prove assertion (i) of the theorem and derive an a priori estimate for $u$ assuming that $u\in C_{x'}^{2+\delta}(\bR^d)$.
By mollification, we can find a sequence of coefficients $a_n=[a^{ij}_n]$, which are continuous, independent of $x'$, satisfy \eqref{elliptic}, and $a_n \to a$ a.e. as $n\to \infty$. Let $L_n$ be the corresponding operator with $a_n$ in place of $a$.
Then we have
\[
L_n u=f_n,\quad \text{where}\quad f_n=f+(a^{ij}_n-a^{ij})D_{ij}u.
\]
Let $\kappa>2$ be a number to be chosen later.
Since $a^{ij}_n$ are independent of $x'$, we have for any $r>0$,
\[
L_n \tu^{\kappa r}=\tf_{n}^{\kappa r}.
\]
Let $B_r=B_r(x_0)$, where $x_0$ is a point in $\bR^d$, and let $w\,(=w_n)\in W^2_{d;\,loc}(B_{\kappa r})\cap C^0(\overline B_{\kappa r})$ be a unique solution of the Dirichlet problem (see \cite[Corollary 9.18]{GT})
\begin{equation}							 \label{eq1.08}
\left\{
  \begin{aligned}
    L_n w = 0 \quad & \hbox{in $\;B_{\kappa r}$,} \\
    w=u-\tu^{\kappa r} \quad & \hbox{on $\;\partial B_{\kappa r}$.}
  \end{aligned}
\right.
\end{equation}
By the ABP maximum principle and Lemma \ref{lem12.25} (ii), we obtain
\begin{equation}
                                                        \label{eq1.27}
\sup_{B_{\kappa r}}\, \abs{w}= \sup_{\partial B_{\kappa r}}\, \abs{w} \le N(\kappa r)^{2+\delta} [u]_{x',2+\delta;\, B_{2\kappa r}}.
\end{equation}
It follows from the theory of Krylov and Safonov that $w$ is locally H\"older continuous in $B_{\kappa r}$ with a H\"older exponent $\delta_0=\delta_0(d,\nu)\in (0,1)$.
Since $a^{ij}$ are independent of $x'$, it is reasonable to expect from \eqref{eq1.08} a better interior estimate for $w$ with respect to $x'$.
Indeed, by using a technique of the finite difference quotients and bootstrapping (see, e.g., \cite[\S 5.3]{CaCa95}), one easily gets from the H\"older estimates of  Krylov and Safonov that, for any integer $j \ge 1$,
\begin{equation}
                                            \label{eq16.41}
\abs{D^j_{x'}w}_{0; \,B_{\kappa r/2}}\le (\kappa r)^{-j} N(j,d,q,\nu) \,\abs{w}_{0;\,B_{\kappa r}},
\end{equation}
where we used notation
\[
\abs{D_{x'}^j w}_{0; \,B_r}=\max_{\alpha\in\bZ_+^q,\; \abs{\alpha}=j} \abs{\tilde{D}^\alpha w}_{0; \,B_r} \quad\text{and}\quad \abs{w}_{0; \,B_r}=\sup_{B_r} \,\abs{w}.
\]
In particular, with $j=3$, we get
\begin{align}
                                                    \label{Eq1.48}
\abs{w- \tT^2_{x_0'}w}_{0;\,B_r} &\le N r^3 \abs{D^3_{x'} w}_{0;\,B_r}\le Nr^3 \abs{D^3_{x'} w}_{0;\,B_{\kappa r/2}}\\
\nonumber
&\le N \kappa^{-3} \abs{w}_{0;\,B_{\kappa r}}\le N \kappa^{\delta-1} r^{2+\delta}[u]_{x',2+\delta; \,B_{2\kappa r}},
\end{align}
where the last inequality is due to \eqref{eq1.27}.

On the other hand, it is clear that $v:=u-\tu^{\kappa r}-w$ satisfies
\[
\left\{
  \begin{aligned}
    L_n v = f_n-\tf_n^{\kappa r} \quad & \hbox{in $B_{\kappa r}$;} \\
    v=0 \quad & \hbox{on $\partial B_{\kappa r}$.}
  \end{aligned}
\right.
\]
Therefore, by the ABP maximum principle and Lemma~\ref{lem12.25} (ii) we have
\begin{multline}
                                                \label{eq2.14}
\abs{u-\tu^{\kappa r}-w}_{0; \,B_{\kappa r}}=\abs{v}_{0; \,B_{\kappa r}}\le N\kappa r \norm{f_n-\tf_n^{\kappa r}}_{L_d(B_{\kappa r})}\\
\le N(\kappa r)^{2+\delta} [f]_{x',\delta; \, B_{2\kappa r}}+ N \kappa r \norm{(a^{ij}_n-a^{ij})D_{ij}u}_{L_d(B_{2\kappa r})}.
\end{multline}
By Lemma \ref{lem12.25} (i), we also get
\begin{multline}
                                                \label{eq2.17}
\abs{\tu^{\kappa r}-\tT^2_{x_0'} \tu^{\kappa r}}_{0;\,B_r} \le Nr^3 \abs{D_{x'}^3 \tu^{\kappa r}}_{0; \,B_r} \le  N r^3 (\kappa r)^{\delta-1} [D^2_{x'} u]_{x', \delta;\, B_{(1+ \kappa) r}} \\
\le N\kappa^{\delta-1} r^{2+\delta}[u]_{x',2+\delta; \,B_{2\kappa r}}.
\end{multline}
Take $p=\tT^2_{x_0'}w+\tT^2_{x_0'} \tu^{\kappa r}\in  \tilde{\bP}_{2}$.
Then combining \eqref{Eq1.48} -- \eqref{eq2.17} yields
\begin{multline}			\label{eq3.09d}
\abs{u-p}_{0;\,B_r} \le \abs{u-\tu^{\kappa r}-w}_{0;\,B_r}+\abs{\tu^{\kappa r}-\tT^2_{x_0'} \tu^{\kappa r}}_{0;\,B_r}+\abs{w-\tT^2_{x_0'}w}_{0;\,B_r}\\
\le N\kappa^{\delta-1} r^{2+\delta}[u]_{x',2+\delta; \,B_{2\kappa r}}+N(\kappa r)^{2+\delta} [f]_{x',\delta;\, B_{2\kappa r}}+N\kappa r \norm{(a^{ij}_n-a^{ij})D_{ij}u}_{L_d(B_{2\kappa r})}.
\end{multline}
Letting $n\to \infty$, this together with the dominated convergence theorem implies
\begin{equation}
                                \label{eq2.28}
r^{-2-\delta}\,\inf_{p\in \tilde{\bP}_{2}} \abs{u-p}_{0;\,B_r(x_0)}\le N\kappa^{\delta-1} [u]_{x',2+\delta; \,B_{2\kappa r}(x_0)}+N \kappa^{2+\delta} [f]_{x',\delta; \,B_{2\kappa r}(x_0)},
\end{equation}
for any $x_0\in \bR^d$ and $r>0$. We take the supremum of the left-hand side \eqref{eq2.28} with respect to $x_0\in \bR^d$ and $r>0$, and then apply \cite[Theorem 3.3.1]{Kr96} to get
\[
[u]_{x',2+\delta}\le N\kappa^{\delta-1} [u]_{x',2+\delta}+N\kappa^{2+\delta} [f]_{x',\delta}.
\]
To finish the proof of \eqref{eq3.58} for $u\in C_{x'}^{2+\delta}(\bR^d)$, it suffices to choose a large $\kappa$ such that $N\kappa^{\delta-1}<1/2$.

Now we drop the assumption that $u\in C_{x'}^{2+\delta}(\bR^d)$ by another use of the partial mollification method.
As noted earlier in the proof, since $a^{ij}$ are independent of $x'$, we have
\[
L \tilde u^{1/n}=\tilde f^{1/n},\qquad n=1,2,\ldots.
\]
Since $\tilde u^{1/n}\in C_{x'}^{2+\delta}(\bR^d)$, by the argument above, we have a uniform estimate
\[
[\tilde u^{1/n}]_{x',2+\delta}\le N[\tilde f^{1/n}]_{x',\delta}\le N[f]_{x',\delta},\quad n=1,2,\ldots.
\]
Moreover, $[\tilde u^{1/n}]_0\le [u]_0$ and $\tilde u^{1/n}$ converges locally uniformly to $u$ as $n$ tends to infinity.
We thus conclude that  $u\in C_{x'}^{2+\delta}(\bR^d)$ and \eqref{eq3.58} holds. This completes the proof of assertion (i).

Next we prove assertion (ii).
In view of the proof above, without loss of generality we may assume that $a^{ij}$ are continuous in $B_1$ and $u\in C_{x'; \,loc}^{2+\delta}(B_1)$.
For $n=1,2,\ldots$, denote $r_n=3/4-2^{-n-1}$ and $B^{(n)}=B_{r_n}$. Note that $r_{n+1}-r_n=2^{-n-2}$ and $B^{(1)}=B_{1/2}$. Now we fix a point $x_0\in B^{(n)}$. Let $\kappa>2$ be a number to be fixed later.
For any $r\le 2^{-n-3}/\kappa$, we have $B_{2 \kappa r}(x_0)\subset B^{(n+1)}$.
It then follows from the proof of \eqref{eq2.28} in the previous step that
\begin{equation}
                                \label{eq2.41}
r^{-2-\delta}\,\inf_{p\in \tilde\bP_{2}} \abs{u-p}_{0; \,B_r(x_0)}\le N\kappa^{\delta-1} [u]_{x',2+\delta;\,B^{(n+1)}}+N \kappa^{2+\delta} [f]_{x',\delta;\,B_1}.
\end{equation}
On the other hand, for any $r\in ( 2^{-n-3}/\kappa,1/4)$, we have
\begin{equation}
                                \label{eq2.43}
r^{-2-\delta}\,\inf_{p\in \tilde\bP_{2}} \abs{u-p}_{0;\,B_r(x_0)}\le r^{-2-\delta} \abs{u}_{0;\,B_r(x_0)}\le (2^{n+3}\kappa)^{2+\delta} \abs{u}_{0;\,B_1}.
\end{equation}
Combining \eqref{eq2.41} and \eqref{eq2.43}, and  then applying \cite[Theorem 3.3.1]{Kr96}, we get
\begin{equation}
                                \label{eq2.48}
[u]_{x',2+\delta; \,B^{(n)}}\le N\kappa^{\delta-1} [u]_{x',2+\delta;\,B^{(n+1)}}+N\kappa^{2+\delta} [f]_{x',\delta;\,B_1}+N(2^{n+3}\kappa)^{2+\delta} \abs{u}_{0; \,B_1}.
\end{equation}
We choose a $\kappa$ sufficiently large such that $N\kappa^{\delta-1}\le 1/10$. By multiplying both sides of \eqref{eq2.48} by $10^{-n}$ and then summing over $n=1,2,\ldots$, we reach
\begin{align}
                                \label{eq2.54}
\sum_{n=1}^\infty 10^{-n} &[u]_{x',2+\delta;\,B^{(n)}}\nonumber\\
&\le \sum_{n=1}^\infty  10^{-n-1} [u]_{x',2+\delta;\,B^{(n+1)}}+N[f]_{x',\delta; \,B_1}+
N \sum_{n=1}^\infty 10^{-n} 2^{n(2+\delta)} \abs{u}_{0; \,B_1}\nonumber\\
&\le \sum_{n=1}^\infty  10^{-n-1} [u]_{x',2+\delta;\,B^{(n+1)}}+N[f]_{x',\delta;\, B_1}+N \abs{u}_{0; \,B_1}.
\end{align}
Since $u\in C^{2+\delta}_{x'}(B_{3/4})$, the summations in \eqref{eq2.54} are finite. By absorbing the first term on the right-hand side of \eqref{eq2.54} to the left-hand side, we get \eqref{eq3.58b}.

Finally, we prove assertion (iii) by combining the proof of \cite[Theorem 2.14]{DK11} with that of assertion (ii).
The theorem is proved.
\hfill\qedsymbol

\subsection{Proof of Theorem~\ref{thm4m}}
In order to prove the theorem, we need a slight generalization of the main result of \cite{Li86}, which can be proved in the same way as in \cite{Li86} by using dilations and standard approximation arguments.
\begin{lemma}
                                                  \label{lem4}
Let $r>0$ and $w\in   W^{2}_{d}(B_{r})$ be a function such that $w=0$ on $\partial B_r$.
Then there are constants $\epsilon \in (0,1]$ and $N$, depending only on $d$ and $\nu$, such that we have
\[
\fint_{B_{r}} \abs{D^2 w}^\epsilon \,dx   \leq N \left(\fint_{B_{r}} \abs{Lw}^{d} \,dx \right)^{\epsilon/ d}.
\]
\end{lemma}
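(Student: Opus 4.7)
The plan is to reduce the statement to F.-H. Lin's original $W^2_\epsilon$ estimate on the unit ball and then transfer it to $B_r$ by a standard dilation, using a routine approximation argument to pass from smooth test functions to $W^2_d$ functions.

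First, I would record Lin's theorem in the following normalized form: for every uniformly elliptic operator $L_0=b^{ij}D_{ij}$ with $b^{ij}$ measurable and satisfying \eqref{elliptic}, and for every $v\in W^2_d(B_1)$ with $v=0$ on $\partial B_1$, there exist $\epsilon\in(0,1]$ and $N$, depending only on $d$ and $\nu$, such that
\[
\Norm{D^2 v}_{L_\epsilon(B_1)}\le N\Norm{L_0 v}_{L_d(B_1)}.
\]
Lin proves this in \cite{Li86} for smooth $v$; to obtain the $W^2_d$ version one approximates $v$ by smooth functions $v_k$ with $v_k=0$ near $\partial B_1$ converging to $v$ in $W^2_d(B_1)$, applies Lin's estimate to each $v_k$, and passes to the limit using $L_d$-convergence of $L_0 v_k$ on the right and Fatou's lemma (together with a.e.\ convergence of $D^2 v_k$ along a subsequence) on the left.

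Next I would scale. Given $w\in W^2_d(B_r)$ with $w=0$ on $\partial B_r$, set
\[
v(y):=r^{-2}\,w(ry),\qquad y\in B_1,
\]
and let $L_0=b^{ij}D_{ij}$ with $b^{ij}(y):=a^{ij}(ry)$. Then $v\in W^2_d(B_1)$, $v=0$ on $\partial B_1$, $b^{ij}$ still satisfies \eqref{elliptic} with the same constant $\nu$, and
\[
D^2 v(y)=(D^2 w)(ry),\qquad L_0 v(y)=(Lw)(ry).
\]
Applying the unit-ball estimate to $v$ and $L_0$, raising to the $\epsilon$-th power, and changing variables $x=ry$ gives
\[
\int_{B_1}\abs{D^2 w(ry)}^\epsilon\,dy \le N^\epsilon\left(\int_{B_1}\abs{Lw(ry)}^d\,dy\right)^{\epsilon/d},
\]
and since the Jacobian factors $r^{-d}$ match on both sides once one divides through by $\abs{B_1}$, this translates into
\[
\fint_{B_r}\abs{D^2 w}^\epsilon\,dx\le N\left(\fint_{B_r}\abs{Lw}^d\,dx\right)^{\epsilon/d},
\]
after absorbing the dimensional constant $\abs{B_1}^{\epsilon/d}$ into $N$.

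There is no serious obstacle: the only nontrivial input is Lin's theorem itself, which is quoted. The two things to check carefully are (i) that the approximation by smooth compactly supported functions is legitimate for general $w\in W^2_d(B_r)$ with zero trace (standard, via mollification combined with a cutoff supported slightly inside $B_r$), and (ii) that the dimensional exponents line up correctly under the dilation, which they do precisely because the left-hand side scales like $r^{-d+0}$ and the $\epsilon/d$ power on the right reproduces the same scaling.
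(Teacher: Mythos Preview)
Your proposal is correct and follows exactly the approach the paper indicates: the paper does not give a detailed proof but merely states that the lemma ``is a slight generalization of the main result of \cite{Li86}, which can be proved in the same way as in \cite{Li86} by using dilations and standard approximation arguments,'' and your argument supplies precisely those two ingredients. The scaling explanation could be phrased more cleanly---since averages satisfy $\fint_{B_r}g(x)\,dx=\fint_{B_1}g(ry)\,dy$, the averaged inequality on $B_r$ is literally the averaged inequality on $B_1$ for $v$ and $L_0$---but your conclusion is correct.
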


Let $\delta_0=\delta_0(d,\nu)>0$ be the H\"older exponent appearing in the Krylov--Safonov estimate. We first assume that $D_{x'}^2u\in C^{\hd}(B_{1/2})$.
By mollification, we can find a sequence of coefficient matrices $a^{ij}_n$, which are continuous in $x$, $\delta$-H\"older continuous in $x'$ with $[a^{ij}_n]_{x',\delta}\le [a^{ij}]_{x',\delta}$, satisfy \eqref{elliptic}, and $a^{ij}_n\to a^{ij}$ a.e. as $n\to \infty$.
Let $L_n$ be the corresponding operator with $a^{ij}_n$ in place of $a^{ij}$.
Then we have
\[
L_n u=f_n,\quad \text{where}\quad f_n=f+(a^{ij}_n-a^{ij})D_{ij}u.
\]
We take a point $x_0\in B_{1/2}$ and $r,R\in (0,1/4)$ such that $0<r<R/4$.
Clearly, $u$ satisfies
\begin{equation}
                        \label{eq8.54}
a_n^{ij}(x_0',x'')D_{ij}u=f_n+g_n,
\end{equation}
where $g_n=\left(a_n^{ij}(x_0',x'')-a_n^{ij}\right)D_{ij}u$.
By the classical $W^2_d$ solvability for elliptic equations with continuous coefficients, there is a unique solution $w\in W^2_d(B_{R}(x_0))$ of the equation
\[
a_n^{ij}(x_0',x'')D_{ij}w=f_n-f(x_0',x'')+g_n
\]
in $B_{R}(x_0)$ with the zero Dirichlet boundary condition.
Thanks to Lemma \ref{lem4}, the triangle inequality, and H\"older's inequality, we have
\begin{align}
\int_{B_R(x_0)}  \Abs{D^2 w}^\epsilon \,dx
&\le NR^{d}\left(\fint_{B_R(x_0)} \Abs{f_n-f(x_0',x'')+g_n}^d \,dx\right)^{\epsilon/d}
\nonumber\\
&\le NR^{d-\epsilon}\left(\int_{B_R(x_0)} \abs{f_n-f}^d \,dx \right)^{\epsilon/d} \nonumber\\
                            \label{eq11.41m}
&\qquad+NR^{d+\epsilon\hd} [f]^\epsilon_{x',\hd; \,B_1}+N R^{d+\epsilon(\delta-d/p)} [a]_{x', \delta; \,B_1}^\epsilon \norm{D^2 u}_{L_p(B_1)}^\epsilon,
\end{align}
where $N=N(d, \nu)$.
It is easily seen that $v:=u-w\in  W^2_d(B_{R}(x_0))$ satisfies
\begin{equation}
                            \label{eq11.47m}
a^{ij}_n(x_0',x'')D_{ij}v=f(x_0',x'')\quad \text{in}\quad B_{R}(x_0).
\end{equation}
Note that both $a^{ij}(x_0',x'')$ and $f(x_0',x'')$ are independent of $x'$.
By mollification with respect to $x'$, without loss of generality, we may assume that $v$ is smooth with respect to $x'$.
By differentiating \eqref{eq11.47m} with respect to $x'$ twice, we see that $\hat v:=D_{x'}^2 v$ satisfies
\[
a_n^{ij}(x_0',x'')D_{ij}\hat v=0\quad \text{in}\quad B_{R/2}(x_0).
\]
Clearly, for any constant $c \in \bR$, the same equation is satisfied by $\tilde v:=\hat v-c$ in place of $\hat v$.
Denote $(\hat{v})_{B_r(x_0)}=\fint_{B_r(x_0)} \hat{v}$.
By applying the Krylov--Safonov estimate, we get
\begin{multline}
                                \label{eq12.31m}
\int_{B_r(x_0)} \Abs{\hat v-(\hat v)_{B_r(x_0)}}^\epsilon \,dx
=\int_{B_r(x_0)} \Abs{\tilde v-(\tilde v)_{B_r(x_0)}}^\epsilon \,dx
\le Nr^{d+\epsilon\delta_0}[\tilde v]^\epsilon_{\delta_0; \,B_{R/4}(x_0)} \\
\le N\left(\frac{r}{R}\right)^{d+\epsilon\delta_0}\int_{B_{R/2}(x_0)} \Abs{\tilde v}^\epsilon \,dx
=N\left(\frac{r}{R}\right)^{d+\epsilon\delta_0}\int_{B_{R/2}(x_0)} \Abs{\hat v-c}^\epsilon \,dx.
\end{multline}
Here, we recall the facts that for all $a, b \ge 0$ we have
\[
(a+b)^\epsilon \le a^\epsilon + b^\epsilon, \quad (a^\epsilon+ b^\epsilon) \le 2(a+b)^\epsilon.
\]
By \eqref{eq11.41m}, \eqref{eq12.31m}, and the above inequalities, we obtain \begin{align}
                                        \label{eq12.43m}
\int_{B_r(x_0)} & \Abs{D_{x'}^2u-(D_{x'}^2 v)_{B_r(x_0)}}^\epsilon \nonumber\\
&\le N\int_{B_r(x_0)} \Abs{D_{x'}^2v-(D_{x'}^2v)_{B_r(x_0)}}^\epsilon +N \int_{B_r(x_0)} \abs{D_{x'}^2 w}^\epsilon \nonumber\\
&\le N\left(\frac{r}{R}\right)^{d+\epsilon\delta_0}\int_{B_R(x_0)} \Abs{D_{x'}^2 v-c}^\epsilon  +N \int_{B_R(x_0)} \Abs{D^2 w}^\epsilon	\nonumber\\
&\le N \left(\frac{r}{R}\right)^{d+\epsilon\delta_0}\int_{B_R(x_0)} \Abs{D_{x'}^2 u-c}^\epsilon
+NR^{d-\epsilon} \left(\int_{B_R(x_0)} \abs{f_n-f}^d \right)^{\epsilon/d}	\nonumber\\
&\quad +NR^{d+\epsilon\hd} [f]^\epsilon_{x',\hd; \,B_1}+NR^{d+\epsilon(\delta-d/p)} [a]_{x', \delta; \,B_1}^\epsilon \norm{D^2 u}_{L_p(B_1)}^\epsilon.
\end{align}
Taking $n\to \infty$ in \eqref{eq12.43m}, by the dominated convergence theorem, we reach
\begin{multline*}
\int_{B_r(x_0)}  \Abs{D_{x'}^2 u-(D_{x'}^2 v)_{B_r(x_0)}}^\epsilon \le N\left(\frac{r}{R}\right)^{d+\epsilon\delta_0} \int_{B_R(x_0)} \Abs{D_{x'}^2 u-c}^\epsilon \\
+NR^{d+\epsilon\hd} [f]^\epsilon_{x',\hd; \,B_1}+NR^{d+\epsilon(\delta-d/p)} [a]_{x', \delta; \,B_1}^\epsilon \norm{D^2 u}_{L_p(B_1)}^\epsilon.
\end{multline*}
We set \footnote{By abuse of notation, we use $D^2_{x'}u$ to denote the scalar $D_{ij}u$ for $i,j=1,\ldots, q$.}
\[
\phi(x_0,r):=\inf_{c \in \bR} \int_{B(x_0,r)} \abs{D^2_{x'}u - c}^\epsilon.
\]
Note that since $c \in \bR$ is arbitrary, we get from the above inequality that
\begin{multline}		                                        \label{eq12.43m1}
\phi(x_0,r) \le N\left(\frac{r}{R}\right)^{d+\epsilon\delta_0} \phi(x_0, R)\\
+NR^{d+\epsilon\hd} [f]^\epsilon_{x',\hd; \,B_1}+NR^{d+\epsilon(\delta-d/p)} [a]_{x', \delta; \,B_1}^\epsilon \norm{D^2 u}_{L_p(B_1)}^\epsilon.
\end{multline}
The following lemma is a is variant of \cite[Lemma 2.1, p. 86]{Giaq83}, the main distinction from which is that the monotonicity of $\phi$ is not assumed below.
\begin{lemma}		\label{lem:giaq}
Let $\phi(t)$ be a nonnegative, bounded function on $(0,R_0]$
such that
\begin{equation}	\label{eq:giap86}
\phi (\rho) \le A \left[(\rho/R)^\alpha+\epsilon \right] \phi(R)+B R^\beta
\end{equation}
for all $0<\rho \le R \le R_0$, where $A$, $\alpha$, $\beta$ are nonnegative constants and $\beta<\alpha$.
Then there exists a constant $\epsilon_0=\epsilon_0(A,\alpha,\beta)$ such that if $\epsilon<\epsilon_0$, for all $0<\rho \le R_0$ we have
\[
\phi(\rho) \le C\left[ R_0^{-\beta} \left(\sup_{(0,R_0]} \phi \right) \rho^\beta +B \rho^\beta \right]
\]
with a constant  $C=C(A, \alpha, \beta)$.
\end{lemma}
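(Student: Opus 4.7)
The plan is to mimic the standard iteration argument used to prove the classical version of this lemma in Giaquinta \cite{Giaq83}, but to carry out the iteration along the discrete geometric sequence $\tau^{k}R_0$ directly, rather than iterating the hypothesis in the form $\phi(\tau R)\le\cdots\phi(R)$ at generic radii (which would require monotonicity of $\phi$ to translate ``dyadic'' control into ``continuous'' control). The hypothesis \eqref{eq:giap86} is in any event valid for \emph{every} pair $\rho\le R\le R_0$, so no monotonicity is needed to iterate it along a fixed geometric sequence.

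Concretely, I would first fix an intermediate exponent $\beta'\in(\beta,\alpha)$, for instance $\beta':=(\alpha+\beta)/2$, and then choose $\tau=\tau(A,\alpha,\beta)\in(0,1)$ so small that $2A\tau^\alpha\le\tau^{\beta'}$. With this $\tau$ fixed, I would set $\epsilon_0:=\tau^{\beta'}/(2A)$; then for any $\epsilon<\epsilon_0$, we have $A(\tau^\alpha+\epsilon)\le\tau^{\beta'}$. Applying \eqref{eq:giap86} with the pair $(\rho,R)=(\tau^{k+1}R_0,\tau^k R_0)$, which is admissible for every $k\ge 0$, we obtain the one-step estimate
\[
\phi(\tau^{k+1}R_0)\le\tau^{\beta'}\phi(\tau^k R_0)+B\tau^{k\beta}R_0^{\beta}.
\]
Iterating this and summing the resulting geometric series (which converges because $\beta'>\beta$) yields
\[
\phi(\tau^k R_0)\le\tau^{k\beta'}\phi(R_0)+C(\tau)B\tau^{k\beta}R_0^{\beta}\le\tau^{k\beta}\bigl[\phi(R_0)+C(\tau)BR_0^\beta\bigr].
\]

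To pass from the discrete sequence $\tau^k R_0$ to a general $\rho\in(0,R_0]$ without invoking monotonicity of $\phi$, I would choose the unique $k\ge 0$ with $\tau^{k+1}R_0<\rho\le\tau^k R_0$ and apply the hypothesis \eqref{eq:giap86} \emph{one more time} to the pair $(\rho,\tau^k R_0)$. Since $\rho\le\tau^k R_0$, the bracket $[(\rho/\tau^k R_0)^\alpha+\epsilon]$ is bounded by $1+\epsilon_0$, which gives
\[
\phi(\rho)\le A(1+\epsilon_0)\phi(\tau^k R_0)+B(\tau^k R_0)^\beta,
\]
and then the previous discrete estimate together with the comparison $\tau^k R_0<\rho/\tau$ (so that $(\tau^k R_0)^\beta\le\tau^{-\beta}\rho^\beta$ and $\tau^{k\beta}\le\tau^{-\beta}R_0^{-\beta}\rho^\beta$) delivers the claim after replacing $\phi(R_0)$ by $\sup_{(0,R_0]}\phi$. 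The constants depend only on $A,\alpha,\beta$ through the choices of $\tau$, $\beta'$, $\epsilon_0$, and $C(\tau)$.

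The only subtle point — the one that would otherwise be the main obstacle — is precisely the absence of monotonicity, which in Giaquinta's original argument is used silently to say $\phi(\rho)\le\phi(\tau^k R_0)$ whenever $\tau^{k+1}R_0<\rho\le\tau^k R_0$. Here we sidestep that step by using the hypothesis itself to estimate $\phi(\rho)$ in terms of $\phi(\tau^k R_0)$, which merely costs us an extra multiplicative constant $A(1+\epsilon_0)$ and an additive $B(\tau^k R_0)^\beta\le B\tau^{-\beta}\rho^\beta$. Everything else is an entirely routine geometric-series computation.
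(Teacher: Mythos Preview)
Your proof is correct, but it follows a genuinely different route from the paper's. The paper does not redo the iteration: instead it introduces the nondecreasing envelope
\[
\psi(t):=\sup_{0<s\le t}\phi(s),
\]
observes that for any $\tau\in(0,1]$ the hypothesis applied to the pair $(\tau\rho,\tau R)$ reads
\[
\phi(\tau\rho)\le A\bigl[(\rho/R)^\alpha+\epsilon\bigr]\phi(\tau R)+B(\tau R)^\beta,
\]
and then takes the supremum over $\tau\in(0,1]$ to conclude that $\psi$ itself satisfies \eqref{eq:giap86}. Since $\psi$ is nondecreasing, the classical lemma of \cite[Lemma~2.1, p.~86]{Giaq83} applies verbatim to $\psi$, and the conclusion for $\phi$ follows from $\phi\le\psi$ and $\psi(R_0)=\sup_{(0,R_0]}\phi$.

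The two arguments trade off as follows. The paper's envelope trick is shorter and reduces everything to the known monotone case in one line; its key observation is the scale invariance $(\tau\rho/\tau R)^\alpha=(\rho/R)^\alpha$, which makes the hypothesis stable under passing to $\psi$. Your argument is more self-contained: you never invoke the classical lemma as a black box, and the device of applying the hypothesis \emph{once more} to the pair $(\rho,\tau^k R_0)$ to bridge from the discrete sequence to a general radius is a neat substitute for monotonicity. Both produce constants depending only on $A,\alpha,\beta$, and both handle the degenerate case $A=0$ trivially (the hypothesis with $R=\rho$ already gives $\phi(\rho)\le B\rho^\beta$).
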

\begin{proof}
Let $\psi(t)= \sup_{0<s\le t} \phi(s)$.
Then $\psi$ is a nonnegative, nondecreasing function on $(0,R_0]$.
For any $\tau \in (0, 1]$, we obtain from \eqref{eq:giap86} that
\[
\phi (\tau \rho) \le A \left[(\rho/R)^\alpha+\epsilon \right] \phi(\tau R)+B(\tau R)^\beta.
\]
Then by taking the supremum over $\tau \in (0,1]$, we find $\psi$ also satisfies the inequality \eqref{eq:giap86}.
Therefore, by \cite[Lemma 2.1, p. 86]{Giaq83}, for all $0<\rho \le R_0$ we have
\[
\psi (\rho) \le C \left[(\rho/R)^\beta \psi(R)+B \rho^\beta \right],
\]
where $C=C(A, \alpha, \beta)$.
\end{proof}

Observe that $\phi(x_0, r) \le \int_{B(x_0,r)}\, \abs{D^2_{x'}u}^\epsilon$ and thus we have
\[
\sup \set{\phi(x_0, r) : x_0 \in B_{1/2}, \,0<r<1/2}  \le N(\epsilon) \left(\int_{B_1} \abs{D^2 u} \,dx\right)^\epsilon.
\]
Therefore, by Lemma~\ref{lem:giaq}, we get from \eqref{eq12.43m1} that for any $x_0 \in B_{1/2}$ and any $r \in (0,1/16)$, there is a number  $c_{x_0,r}$ such that\footnote{The infimum in the definition of $\phi(x_0,r)$ is realized by some number $c$ in $\bR$.}
\begin{equation}
                                        \label{eq2.31m}
\int_{B(x_0,r)} \Abs{D^2_{x'}u - c_{x_0,r}}^\epsilon \le Nr^{d+\epsilon\hd}\left( [f]^\epsilon_{x',\hd; \,B_1} +\left(1+ [a]_{x', \delta;\, B_1}^\epsilon \right) \norm{D^2 u}^\epsilon_{L_p(B_1)}\right).
\end{equation}
By Campanato's characterization of H\"older continuous functions (see Lemma \ref{lemma6.14}), we obtain \eqref{eq3.59} from \eqref{eq2.31m}.

To remove the additional assumption $D_{x'}^2u\in C^{\hd}(B_{1/2})$, we use a mollification argument. Taking the mollification of \eqref{eq8.54} with respect to $x'$ and then differentiating with respect to $x'$ twice, we get that for any $\theta\in (0,1/4)$,
\[
a_n^{ij}(x_0',x'')D_{ij}\left(D_{x'}^2\tilde u^\theta\right)=D_{x'}^2\tilde f_n^\theta+D_{x'}^2\tilde g_n^\theta\quad \text{in}\quad B_{3/4}.
\]
Thanks to the Krylov--Safonov estimate, $D_{x'}^2\tilde u^\theta\in C^{\delta_0}(B_{1/2})$.
Then by the proof above with $u^\theta$, $\tilde f_n^\theta$, $\tilde f^\theta$, and $\tilde g_n^\theta$ in place of $u$, $f_n$, $f$, and $g_n$, respectively,  we get \eqref{eq2.31m} with $u$ replaced by $\tilde u^\theta$ on the left-hand side. Therefore, by Lemma \ref{lemma6.14},
\[
[D_{x'}^2\tilde u^\theta]_{\hd; \,B_{1/2}}\le
N\left( [f]_{x',\hd; \,B_1} +\left(1+ [a]_{x', \delta; \,B_1}\right) \norm{D^2 u}_{L_p(B_1)}\right),
\]
where $N$ is independent of $\theta$.
Since $D_{x'}^2\tilde u^\theta\to D_{x'}^2 u$ a.e. as $\theta\to 0$ by the Lebesgue lemma, we obtain \eqref{eq3.59}.
This completes the proof of assertion (i).

The proof of assertion (ii) is similar.
We provide the details for the completeness.
Take a point $x_0\in B_{1/2}$ and $r,R\in (0,1/4)$ such that $0<r<R/4$.
Let $w\in W^1_2(B_{R}(x_0))$ be the weak solution of the equation
\[
D_i \left(a^{ij}(x_0',x'')D_{j}w \right)=\dv \left(\vec f(x)-\vec f(x_0',x'')\right) +D_i \left((a^{ij}(x_0',x'')-a^{ij}(x))D_{j}u\right)
\]
in $B_{R}(x_0)$ with the zero Dirichlet boundary condition.
By the $W^1_2$ estimate and H\"older's inequality, we have
\begin{align}
\int_{B_R(x_0)} \abs{Dw}^2 \,dx
&\le N\int_{B_R(x_0)} \Abs{ \vec f(x)-\vec f(x_0',x'') + \left(a^{ij}(x_0',x'')-a^{ij}(x)\right) D_{j}u}^2 \,dx\nonumber\\
                            \label{eq11.41md}
&\le N \left(R^{d+2\hd} [\vec f]^2_{x',\hd; \,B_1}+R^{d+2(\delta-d/p)} [a]_{x', \delta; \,B_1}^2 \norm{Du}_{L_{p}(B_1)}^2\right),
\end{align}
where $N=N(d, \nu)$.
It is easily seen that $v:=u-w\in  W^1_2(B_{R}(x_0))$ satisfies
\begin{equation}
                            \label{eq11.47md}
D_i \left(a^{ij}(x_0',x'')D_{j}v \right)=\dv \vec f(x_0',x'')\quad \text{in}\quad B_{R}(x_0).
\end{equation}
Note that both $a^{ij}(x_0',x'')$ and $\vec f(x_0',x'')$ are independent of $x'$.
By mollification with respect to $x'$, without loss of generality, we may assume that $v$ is smooth with respect to $x'$.
By differentiating \eqref{eq11.47md} with respect to $x'$, we see that $\hat v:=D_{x'} v$ satisfies
\begin{equation}
				\label{eq3.38ca}
D_i \left(a^{ij}(x_0',x'')D_{j}\hat v\right)=0\quad \text{in}\quad B_{R/2}(x_0).
\end{equation}
Clearly, the above equation is still satisfied by $\tilde v:=\hat v-(\hat v)_{B_R(x_0)}$ in place of $\hat v$.
Therefore, by applying the De Giorgi--Nash--Moser estimate, we get
\begin{align}
                                \label{eq12.31md}
\int_{B_r(x_0)} & \Abs{\hat v-(\hat v)_{B_r(x_0)}}^2 \,dx
=\int_{B_r(x_0)}\Abs{\tilde v-(\tilde v)_{B_r(x_0)}}^2 \,dx
\le Nr^{d+2\delta_0} [\tilde v]^2_{\delta_0; \,B_{R/4}(x_0)}\nonumber\\
&\le N\left(\frac{r}{R}\right)^{d+2\delta_0}\int_{B_{R/2}(x_0)} \Abs{\tilde v}^2 \,dx
= N\left(\frac{r}{R}\right)^{d+2\delta_0}\int_{B_{R/2}(x_0)} \Abs{\hat v-(\hat v)_{B_R(x_0)}}^2 \,dx,
\end{align}
where  $\delta_0=\delta_0(n, \nu)>0$ is the H\"older exponent appearing in the De Giorgi--Nash--Moser estimate.
By \eqref{eq11.41md}, \eqref{eq12.31md}, and the triangle inequality, we obtain
\begin{align}
                                        \label{eq12.43md}
\int_{B_r(x_0)} & \Abs{D_{x'}u-(D_{x'}u)_{B_r(x_0)}}^2 \,dx\nonumber\\
&\le N\int_{B_r(x_0)}\Abs{D_{x'}v-(D_{x'}v)_{B_r(x_0)}}^2 \,dx+N
\int_{B_r(x_0)}\Abs{D_{x'}w}^2 \,dx	\nonumber\\
&\le N\left(\frac{r}{R}\right)^{d+2\delta_0}\int_{B_{R/2}(x_0)} \Abs{D_{x'}v-(D_{x'} v)_{B_{R}(x_0)} }^2 \,dx +N \int_{B_R(x_0)}\Abs{D w}^2 \,dx
\nonumber\\
&\le N\left(\frac{r}{R}\right)^{d+2\delta_0}\int_{B_{R/2}(x_0)} \Abs{D_{x'} u-(D_{x'} u)_{B_{R}(x_0)}}^2\,dx + N \int_{B_R(x_0)}\Abs{D w}^2 \,dx	\nonumber\\
&\le N\left(\frac{r}{R}\right)^{d+2\delta_0}\int_{B_{R}(x_0)} \Abs{D_{x'} u-(D_{x'} u)_{B_{R}(x_0)}}^2\,dx	\nonumber\\
&\omit\hfill $+NR^{d+2\hd} \left([\vec f]^2_{x',\hd; \,B_1}
+ [a]_{x', \delta; \,B_1}^2 \norm{Du}^2_{L_p(B_1)}\right)$.
\end{align}
By Lemma~\ref{lem:giaq}, we infer from \eqref{eq12.43md} that, for all $0<r<1/16$, we have
\begin{align}
                                        \label{eq2.31md}
\int_{B_r(x_0)} &\Abs{D_{x'}u-(D_{x'}u)_{B_r(x_0)}}^2 \,dx	\nonumber\\
&\le N r^{d+2\hd}\left(\int_{B_{1/4}(x_0)} \Abs{D_{x'} u}^2 \,dx+[\vec f]^2_{x',\hd; \,B_1}
+ [a]_{x', \delta; \,B_1}^2 \norm{Du}^2_{L_p(B_1)}\right)	\nonumber\\
&\le Nr^{d+2\hd}\left([\vec f]^2_{x',\hd; \,B_1} + \left(1+ [a]_{x', \delta; \,B_1}^2\right) \norm{Du}^2_{L_p(B_1)}\right).
\end{align}
Then we get \eqref{eq4.00} from \eqref{eq2.31md}.
The theorem is proved.
\hfill\qedsymbol

\subsection{Proof of Theorem~\ref{thm4mn}}
For the simplicity of presentation, in Lemma \ref{lem:sss} (ii) and the proof of Theorem~\ref{thm4mn} below, we slightly abuse the notation by still using $u$, $a^{ij}$, and $\cL$ to denote $(u_1,\ldots,u_m)^T$, $[a^{ij}_{\alpha\beta}]_{\alpha,\beta=1}^m$, and $D_i (a^{ij}_{\alpha\beta}D_j)$, respectively, where $m$ is a positive integer; that is, we keep using the scalar notation for the systems as well.
The following lemma is a key for the proof.
\begin{lemma}		\label{lem:sss}
Assume that $a^{ij}(x)=a^{ij}(x'')=a^{ij}(x^{d-1},x^d)$.
There exist constants $\delta_0=\delta_0(d, \nu) \in (0,1)$ and $N=N(d,\nu)$ such that the following hold.
\begin{enumerate}[(i)]
\item
If $u \in W^2_2(B_1)$ is a strong solution of $Lu=0$ in $B_1$, then we have
\[
[Du]_{\delta_0; \,B_{1/2}} \le N \norm{u}_{L_2(B_1)}.
\]
\item
If $u \in W^1_2(B_1)$ is a weak solution of a strongly elliptic system $\cL u =0$ in $B_1$, then we have
\[
[u]_{\delta_0; \,B_{1/2}} \le N \norm{u}_{L_2(B_1)}.
\]
\end{enumerate}
\end{lemma}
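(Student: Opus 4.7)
The key structural observation is that $a^{ij}$ depends only on the two ``bad'' variables $x''=(x^{d-1},x^d)$, which allows two complementary uses: (1) differentiating the equation any number of times in the $d-2$ ``good'' variables $x'$ preserves the equation, so iterated regularity in the $x'$-directions is free; and (2) on each slice $\{x'=\mathrm{const}\}$ the equation is effectively two-dimensional, where higher regularity is available even for merely measurable coefficients.

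For (ii), the plan is as follows. Since $\cL(\tD^\beta u)=0$ for every $\beta\in\bZ_+^{d-2}$, iterating the Caccioppoli inequality on nested balls gives
\[
\norm{\tD^\beta u}_{W^1_2(B_r)}\le N(\beta,r)\norm{u}_{L_2(B_1)},\qquad 0<r<1,
\]
for every $\beta$. A Sobolev embedding in the $(d-2)$-dimensional $x'$-variables converts this into pointwise control of $u$ at the cost of an $L_2$ weight in $x''$. For each fixed $x'$, the slice $u(x',\cdot)$ satisfies a two-dimensional divergence-form elliptic system whose right-hand side is the $x''$-divergence of $H^i=\sum_{j\le q}a^{ij}(x'')\partial^{x'}_j u$ plus a source $F=\sum_{i\le q}\partial^{x'}_i J^i$ controlled by the iterated bounds. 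Morrey's classical H\"older regularity theorem for planar elliptic systems with bounded measurable coefficients then yields $[u(x',\cdot)]_{\delta_0;\,B^{x''}_{1/2}}\le N\norm{u}_{L_2(B_1)}$ uniformly in $x'$. Finally, since $\partial_j u$ ($j\le q$) is again a weak solution of the system, re-running these steps for $\partial_j u$ in place of $u$ gives $D_{x'}u\in L^\infty(B_{1/2})$, so $u$ is Lipschitz in $x'$; combining with the slice H\"older estimate in $x''$ yields $u\in C^{\delta_0}(B_{1/2})$ with the desired bound.

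For (i), I would follow the same three-step scheme with the Caccioppoli step replaced by interior $W^2_p$ estimates coming from the Dong--Krylov $W^2_{2+\varepsilon}$ result for non-divergence equations with coefficients measurable in only two directions, applied iteratively to $\tD^\beta u$. For the planar slice estimate, the two-dimensional theory of non-divergence elliptic equations with measurable coefficients supplies $W^2_p$ bounds for some $p>2$ depending only on $\nu$, and Morrey's inequality in dimension two then gives $u(x',\cdot)\in C^{1,\alpha_0}(B^{x''}_{1/2})$ uniformly in $x'$. Applying the Krylov--Safonov estimate to $\partial_j u$ for $j\le q$ covers H\"older continuity of $D_{x'}u$ in all variables; applying the slice regularity to $\partial_j u$ shows that $D^{x''}\partial_j u\in L^\infty$, so $D_{x''}u$ is Lipschitz in $x'$. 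The combination gives $[Du]_{\delta_0;\,B_{1/2}}\le N\norm{u}_{L_2(B_1)}$.

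The main technical obstacle lies in (i): arranging the iterative $W^2_p$ bootstrap so that the integrability exponent remains strictly above $2$ throughout, and extracting uniform-in-$x'$ $L^p$ bounds on the slice-equation source so that the two-dimensional interior estimate may actually be invoked. By contrast, once the slice H\"older estimate in $x''$ and the Lipschitz estimate in $x'$ are in hand, assembling them into H\"older continuity in all variables is essentially elementary.
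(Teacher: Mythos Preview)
Your overall strategy---exploit $x'$-invariance to differentiate freely in the good directions, then extract $x''$-regularity from a two-dimensional mechanism---matches the paper's, but the implementation differs, and in part~(ii) there is a real gap.

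In (ii) you iterate only the Caccioppoli inequality, obtaining $\tD^\beta u\in W^1_2$ for every $\beta$. After Sobolev embedding in the $x'$-variables this yields, for each fixed $x'$, slice data $H^i=\sum_{j\le q}a^{ij}\partial_j u$ lying merely in $L^2(dx'')$. But Morrey's planar H\"older theorem for systems with right-hand side $\operatorname{div}_{x''}H+F$ requires $H\in L^p_{x''}$ for some $p>2$; the case $p=2$ is exactly borderline and fails (for $\Delta u=\operatorname{div}H$ in $\bR^2$ with $H\in L^2$ one only gets $u\in W^{1,2}\hookrightarrow\mathrm{BMO}$, not $C^\alpha$). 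The missing ingredient is the reverse H\"older inequality (Gehring), which upgrades $Du$ to $L^{p_0}$ for some $p_0=p_0(d,\nu)>2$; iterating on $\tD^\beta u$ then gives $DD^k_{x'}u\in L^{p_0}$ for every $k$, and now the slice data lie in $L^{p_0}_{x''}$ with $p_0>2$. This is precisely the step the paper supplies.

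Once $DD^k_{x'}u\in L^{p_0}$ for all $k$ (and in (i), $D^2D^k_{x'}u\in L^{p_0}$ via Dong--Krylov, which you correctly invoke), the paper's route is actually shorter than the slice-PDE approach: no slice equation is used at all. Setting $\delta_0=1-2/p_0$ and $v=Du$ (respectively $v=u$), the two-dimensional Morrey--Sobolev embedding gives
\[
\sup_{x''\ne y''}\frac{|v(x',x'')-v(x',y'')|}{|x''-y''|^{\delta_0}}\le N\bigl(\|v(x',\cdot)\|_{L^{p_0}_{x''}}+\|D_{x''}v(x',\cdot)\|_{L^{p_0}_{x''}}\bigr),
\]
and the right-hand side is bounded uniformly in $x'$ by applying a high-order Sobolev embedding in the $d-2$ variables $x'$ to the $L^{p_0}$ bounds on $D^k_{x'}v$ and $D_{x''}D^k_{x'}v$. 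The $x'$-H\"older estimate is handled symmetrically. In particular, for (i) your slice equation is redundant: once Dong--Krylov provides $D^2 D^k_{x'}u\in L^{p_0}$, Morrey--Sobolev in the two $x''$-variables already delivers $Du\in C^{\delta_0}_{x''}$ without re-solving anything on slices. You flagged (i) as the harder case for your method, but the genuine obstruction sits in (ii), and its cure---reverse H\"older---simultaneously simplifies both parts.
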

\begin{proof}
In the proof we shall denote $I_r=(-r,r)^d$, $I_r'=(-r,r)^{d-2}$, and $I_r''=(-r,r)^2$.
We begin with assertion (i).
We recall that there exist constants $p_0=p_0(d, \nu)>2$ and $N=N(d,\nu)$ such that  if $u \in W^{2}_2(I_2)$ is a strong solution of $Lu=0$ in $I_2$, then $u \in W_{p_0}^2(I_1)$ and
\begin{equation}		\label{eq:dokr}
\norm{u}_{W^2_{p_0}(I_1)} \le N \norm{u}_{L_{p_0}(I_{2})}.
\end{equation}
Indeed, for $1 \le s < t \le 2$ fixed, let $\hat\eta$ be a smooth function on $\bR$ with a compact support in $(-s,s)$ such that $\hat\eta \equiv 1$ on $(-t,t)$, $\abs{\hat\eta'}_0 \le 2/(s-t)$, and $\abs{\hat\eta''}_0 \le 6/(s-t)^2$.
By \cite[Theorem~2.8]{DoKr10} applied to $v=\hat\zeta u$, where $\hat\zeta(x)=\prod_{i=1}^d \hat\eta(x^i)$, followed by an interpolation inequality for the Sobolev spaces, we have
\begin{align*}
\norm{D^2 u}_{L_{p_0}(I_t)} &\le N (s-t)^{-2}\, \norm{u}_{L_{p_0}(I_s)} + N (s-t)^{-1}\, \norm{Du}_{L_{p_0}(I_s)} \\
& \le N \epsilon^{-1} (s-t)^{-2}\, \norm{u}_{L_{p_0}(I_s)} + \epsilon \norm{D^2u}_{L_{p_0}(I_s)}.
\end{align*}
Then, by \cite[Lemma~3.1, p. 161]{Giaq83}, we obtain
\[
\norm{D^2 u}_{L_{p_0}(I_1)} \le N \norm{u}_{L_{p_0}(I_{2})}.
\]
This inequality and interpolation inequalities for the Sobolev spaces, we get \eqref{eq:dokr}.
Since we assume that $a^{ij}$ are independent of $x'$, by using the argument of finite-difference quotients, the same inequality also holds
for $D_{x'}^j u$ ($j=1, 2, \ldots$ ) in place of $u$.
Then by successive application of \eqref{eq:dokr} together with standard covering argument, we obtain
\begin{equation}		\label{eq2rvh}
\norm{D^2 D^j_{x'} u}_{L_{p_0}(I_1)} + \norm{D D^j_{x'} u}_{L_{p_0}(I_1)} \le N \norm{u}_{L_{p_0}(I_{3/2})}\le N \norm{u}_{L_2(I_2)},\quad j=0,1,2,\ldots,
\end{equation}
where we used Krylov--Safonov estimate in the last inequality; that is,
\[
\norm{u}_{L_\infty(I_{3/2})}  \le N \norm{u}_{L_2(I_2)}.
\]
Take $\delta_0= 1-2/p_0$ and set $v=Du$.
By the Morrey-Sobolev inequality, we have
\begin{equation}		\label{eq:sevit0}
\sup_{\substack{x'', y'' \in I_1'' \\ x'' \neq y''}}\frac{\abs{v(x', x'')-v(x', y'')}}{\abs{x''-y''}^{\delta_0}} \le N \left( \norm{D v(x',\cdot\,)}_{L_{p_0}(I_1'')}+\norm{v(x',\cdot\,)}_{L_{p_0}(I_1'')}\right).
\end{equation}
On the other hand, by the Sobolev embedding, there exists $k \in \bN$ such that $v(x',x'')$ and $D_{x''}v(x',x'')$ as functions of $x' \in I_1'$ satisfy
\[
\sup_{x' \in I_1'} \left( \abs{v(x', x'')} + \abs{D_{x''}v(x', x'')}\right) \le N  \left(\norm{v(\,\cdot\,, x'')}_{W^k_{p_0}(I_1')}+\norm{D_{x''}v(\,\cdot\,, x'')}_{W^k_{p_0}(I_1')}\right).
\]
This implies that for all $x'\in I_1'$ we have
\[
\int_{I_1''} \abs{v(x',x'')}^{p_0} + \abs{D_{x''}v(x',x'')}^{p_0} \,dx'' \le N  \sum_{0 \le j \le k}\left(\norm{D_{x'}^j v}_{L_p(I_1)}^{p_0}+\norm{D_{x'}^j D_{x''} v}_{L_p(I_1)}^{p_0}\right).
\]
This combined with \eqref{eq:sevit0} and \eqref{eq2rvh} shows that (recall $v=Du$)
\begin{equation}		\label{eq:doong0}
[Du]_{x'', \delta_0; \,I_1} \le N  \norm{u}_{L_2(I_2)}.
\end{equation}
Next, again by the Sobolev embedding theorem, we find a positive integer $k$ such that $v(x',x'')$, as a function of $x' \in I_1'$, satisfies
\begin{equation}		\label{eq:sevit00}
\sup_{\substack{x', y' \in I_1' \\ x' \neq y'}}\frac{\abs{v(x', x'')-v(y', x'')}}{\abs{x'-y'}^{\delta_0}} \le N \norm{v(\,\cdot\,,x'')}_{W^k_{p_0}(I_1')}.
\end{equation}
By the Morrey-Sobolev inequality, for $j=0, 1, \ldots, k$, $D^j_{x'} v(x', x'')$, as a function of $x'' \in I_1''$, satisfies
\[
\sup_{x'' \in I_1''}\, \abs{D^j_{x'} v(x', x'')} \le N \left( \norm{D^j_{x'} v(x', \cdot\,)}_{L_{p_0}(I_1'')} + \norm{D_{x''}D^j_{x'} v(x', \cdot\,)}_{L_{p_0}(I_1'')}\right).
\]
This together with \eqref{eq:sevit00} and \eqref{eq2rvh} gives (recall $v=Du$)
\begin{equation}		\label{eq:doong00}
[Du]_{x', \delta_0;\, I_1} \le N \sum_{0\le j \le k} \left(\norm{D_{x'}^j Du}_{L_{p_0}(I_1)} + \norm{D_{x''}D_{x'}^j Du}_{L_{p_0}(I_1)} \right) \le N \norm{u}_{L_2(I_2)}.
\end{equation}
By combining \eqref{eq:doong0} and \eqref{eq:doong00} and using a standard covering argument, we get assertion (i).

The proof of assertion (ii) is similar.
Recall that there exist constants $p_0=p_0(d,\nu)>2$ and $N=N(d,\nu)$ such that if $u \in W^1_2(I_2)$ is a weak solution of $\cL u =0$ in $I_2$, then $Du$ is $p_0$-integrable in $I_{1}$ and
\[
\norm{Du}_{L_{p_0}(I_{1})} \le N \norm{Du}_{L_2(I_{3/2})}.
\]
Also, note that by the Sobolev embedding theorem, we have
\[
\norm{u}_{L_{2d/(d-2)}(I_{1})} \le N \norm{u}_{W^1_2(I_1)}.
\]
Therefore, by the Caccioppoli inequality, we get (by replacing $p_0$ by $2d/(d-2)$ and using H\"older's inequality if necessary)
\[
\norm{u}_{L_{p_0}(I_{1})}+ \norm{Du}_{L_{p_0}(I_{1})} \le N \norm{u}_{L_2(I_2)}.
\]
Since we assume $a^{ij}$ are independent of $x'$, the same inequality also holds
for $D_{x'}^j u$ ($j=1, 2, \ldots$ ) in place of $u$.
Then by successive application of the Caccioppoli inequality together with a standard covering argument, we obtain
\begin{equation}		\label{eq3rvh}
\norm{D^j_{x'}u}_{L_{p_0}(I_{1})} + \norm{DD^j_{x'}u}_{L_{p_0}(I_{1})} \le N(d, \nu, j) \,\norm{u}_{L_2(I_2)},\quad j=0,1,2,\ldots.
\end{equation}
Then by using \eqref{eq3rvh} instead of \eqref{eq2rvh} and repeating the same argument \eqref{eq:sevit0}--\eqref{eq:doong00} with $u$ in place of $v$, we get
assertion (ii).
The lemma is proved.
\end{proof}

We begin with the proof of assertion (i).
We follow exactly the same line of the proof of Theorem~\ref{thm4m} (i) up to \eqref{eq11.47m}.
As before, by the mollification argument we may assume that $DD_{x'}u\in C^{\hd}(B_{1/2})$.
Recall that $x''=(x^{d-1}, x^d)$ and observe that $\hat{v}:= D_{x'} v$ satisfies the equation
\begin{equation}		\label{eq:pout}
a^{ij}_n (x_0', x^{d-1}, x^d) D_{ij} \hat{v}=0\quad \text{in}\quad B_{R/2}(x_0).
\end{equation}
By Lemma~\ref{lem:sss} with a scaling, we find that
\begin{equation}		\label{eq:213b}
[D \hat{v}]_{\delta_0; \,B_{R/4}(x_0)} \le  N R^{-d/2-1-\delta_0} \norm{\hat{v}}_{L_2(B_{3R/8}(x_0))} \le N R^{-d/\epsilon-1-\delta_0} \norm{\hat{v}}_{L_\epsilon(B_{R/2}(x_0))},
\end{equation}
where we used Krylov--Safonov estimate in the last inequality; that is,
\[
\norm{\hat v}_{L_\infty(B_{3R/8}(x_0))}  \le N
\left(\fint_{B_{R/2}(x_0)} \abs{\hat v}^\epsilon \right)^{1/\epsilon}.
\]
Note that \eqref{eq:pout} is still satisfied with
\[
\tilde v:=\hat v-(\hat v)_{B_{R/2}(x_0)}-\vec c \cdot (x-x_0)
\]
in place of $\hat v$, where $\vec c \in \bR^d$ is an arbitrary constant vector.
It should be noted that $(\tilde{v})_{B_{R/2}(x_0)}=0$.
Therefore, by applying \eqref{eq:213b} and the Poincar\'e inequality, we get
\begin{align}
                                \label{eq12.31mnp}
\int_{B_r(x_0)} & \Abs{D\hat v-(D\hat v)_{B_r(x_0)}}^\epsilon \,dx
=\int_{B_r(x_0)} \Abs{D\tilde v-(D\tilde v)_{B_r(x_0)}}^\epsilon \,dx \nonumber \\
&\le Nr^{d+\epsilon \delta_0}[D\tilde v]^\epsilon_{\delta_0;\, B_{R/4}(x_0)}	\le Nr^{d+\epsilon\delta_0}R^{-d-\epsilon-\epsilon\delta_0}\int_{B_{R/2}(x_0)} \Abs{\tilde v}^\epsilon \,dx	\nonumber\\
& \le N\left(\frac{r}{R}\right)^{d+\epsilon\delta_0}\int_{B_{R/2}(x_0)} \Abs{D\tilde v}^\epsilon \,dx\nonumber\\
&= N \left(\frac{r}{R}\right)^{d+\epsilon\delta_0}\int_{B_{R/2}(x_0)} \Abs{D\hat v-\vec c}^\epsilon \,dx.
\end{align}
Then by using \eqref{eq12.31mnp} instead of \eqref{eq12.31m} and proceed as in the proof of Theorem~\ref{thm4m}, we find, similar to \eqref{eq2.31m}, that for any $x_0 \in B_{1/2}$ and any $r \in (0,1/16)$, there is a constant vector $\vec c_{x_0,r} \in \bR^d$ such that
\[
\int_{B_r(x_0)}  \Abs{DD_{x'}u-\vec c_{x_0 ,r}}^\epsilon \,dx \\
\le Nr^{d+\epsilon\hd} \left( [f]^\epsilon_{x',\hd; \,B_1}+ \left(1+[a]_{x', \delta; \,B_1}^\epsilon\right) \norm{D^2 u}_{L_p(B_1)}^\epsilon \right),
\]
from which \eqref{eq3.59mn} follows as before.
This completes the proof of assertion (i).

We now turn to assertion (ii), the proof of which is slightly different from that of Theorem~\ref{thm4m} (ii) in a way that we dispense with the De Giorgi--Nash--Moser estimate so that the proof carries over to the case of strongly elliptic systems.
First, we follow exactly the same proof of Theorem~\ref{thm4m} (ii) up to \eqref{eq3.38ca}.
Recall that we assumed that $x''=(x^{d-1}, x^d)$ and observe that \eqref{eq3.38ca} is still satisfied by $\tilde{v}:=\hat{v}-(\hat{v})_{B_R(x_0)}$.
Therefore, by Lemma~\ref{lem:sss} with a scaling (note that Lemma~\ref{lem:sss} (ii) holds for systems), we find that $\tilde{v}$ satisfies an estimate
\[
[\tilde{v}]_{\delta_0; \,B_{R/4}(x_0)} \le N R^{-\delta_0-d/2} \norm{\tilde{v}}_{L_2(B_{R/2}(x_0))}
\]
with $\delta_0=\delta_0(d, \nu)>0$ and $N=N(d, \nu)$.

Then, by utilizing the above instead of De Giorgi--Nash--Moser  estimate, we repeat the rest of proof of Theorem~\ref{thm4m} (ii) and obtain \eqref{eq4.00}.
The theorem is proved.
\hfill\qedsymbol

\subsection{Proof of Theorem~\ref{thm4}}
For the proof of the theorem we need the following three lemmas.
\begin{lemma}
                            \label{lem1}
Let $p\in (1,\infty)$ be a constant.
Assume that $a=[a^{ij}]$ are continuous with respect to $(x^1,\ldots,x^{d-1})$ in $\bar B_{2/3}$ with a modulus of continuity $\omega_a$ and continuous in $\bar B_{1}\setminus B_{2/3}$ with a modulus of continuity $\tilde \omega_a$.
Then for any $f\in L_p(B_1)$, there is a unique strong solution $u\in W^2_p(B_1)$ to the equation $a^{ij}D_{ij}u=f$ in $B_1$ with the Dirichlet boundary condition $u=0$ on $\partial B_1$.
Moreover, we have
\[
\norm{u}_{W^2_p(B_1)}\le N \norm{f}_{L_p(B_1)},
\]
where $N$ depends only on $d$, $p$, $\nu$, $\omega_a$, and $\tilde \omega_a$.
\end{lemma}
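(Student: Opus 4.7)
The plan is to establish the a priori estimate $\norm{u}_{W^2_p(B_1)} \le N \norm{f}_{L_p(B_1)}$ first, and then deduce existence and uniqueness by the method of continuity (or equivalently by smooth approximation of the coefficients combined with the uniform a priori estimate). The estimate itself is derived by splitting $u$ via a partition of unity into a piece supported in the ``rough'' interior region $B_{2/3}$, where $a$ is continuous only in $(x^1,\ldots,x^{d-1})$ and merely measurable in $x^d$, and a piece supported in the ``smooth'' annulus $\bar B_1\setminus B_{2/3}$, where $a$ is fully continuous, and then applying two different $W^2_p$ estimates to the two pieces.

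More precisely, I would choose smooth cutoffs $\eta_1,\eta_2\ge 0$ with $\eta_1+\eta_2\equiv 1$ on $\bar B_1$, $\mathrm{supp}\,\eta_1\subset B_{3/5}$, and $\mathrm{supp}\,\eta_2\subset B_1\setminus\bar B_{5/9}$, and set $v_k:=\eta_k u$, so that each $v_k$ satisfies
\[
a^{ij}D_{ij}v_k=\eta_k f+2a^{ij}(D_i\eta_k)(D_j u)+(a^{ij}D_{ij}\eta_k)\,u.
\]
For $v_1$, compactly supported inside $B_{3/5}\subset B_{2/3}$, I would extend $a$ from $\bar B_{2/3}$ to all of $\bR^d$ while preserving ellipticity and the uniform continuity in $(x^1,\ldots,x^{d-1})$, and then invoke the whole-space $W^2_p$ estimate for operators whose coefficients are measurable in $x^d$ and VMO in the remaining directions, from \cite{KK07,DK09,Dong12b} (continuity certainly implies VMO). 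For $v_2$, supported in a region where $a$ is uniformly continuous and vanishing on the smooth boundary $\partial B_1$, I would apply the classical $W^2_p$ solvability and estimate for the Dirichlet problem with VMO coefficients, which is valid up to $\partial B_1$.

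Adding the two resulting bounds produces $\norm{D^2u}_{L_p(B_1)}\le N\bigl(\norm{f}_{L_p(B_1)}+\norm{Du}_{L_p(B_1)}+\norm{u}_{L_p(B_1)}\bigr)$. The intermediate term is absorbed by the standard interpolation $\norm{Du}_{L_p}\le\epsilon\norm{D^2u}_{L_p}+C_\epsilon\norm{u}_{L_p}$, leaving the conditional bound $\norm{u}_{W^2_p(B_1)}\le N(\norm{f}_{L_p(B_1)}+\norm{u}_{L_p(B_1)})$. The residual $\norm{u}_{L_p}$ term is then removed by a Rellich compactness-and-uniqueness argument: were it not removable, a normalized blow-up sequence of solutions would produce, in the limit, a nontrivial $W^2_p$ solution of the homogeneous Dirichlet problem, contradicting uniqueness. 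With the unconditional a priori estimate in hand, the method of continuity along $L_t:=(1-t)\Delta+t\,a^{ij}D_{ij}$, whose coefficient families share the same moduli $\omega_a,\tilde\omega_a$ and the same ellipticity constant uniformly in $t\in[0,1]$, transports solvability from the classical case $t=0$ to $t=1$.

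The main obstacle I expect is twofold. First, the extension of $a$ from $\bar B_{2/3}$ to $\bR^d$ preserving both ellipticity and the uniform continuity in $(x^1,\ldots,x^{d-1})$ requires a careful construction; one natural route is a slice-wise Tietze-type extension glued smoothly to a fixed elliptic matrix outside a larger ball, with some care to keep the modulus of continuity uniform across slices. Second, establishing the uniqueness invoked above when $p<d$ is the most delicate step, since the ABP maximum principle is not directly available in $W^2_p$ for such $p$; this can be handled by bootstrapping through Sobolev embedding, observing that by the conditional estimate a homogeneous $W^2_p$ solution automatically lies in $W^2_q$ for every $q\in(1,\infty)$, and then applying ABP at $q=d$.
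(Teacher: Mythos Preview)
Your strategy matches the paper's two-line proof, which simply cites the interior $W^2_p$ estimate of \cite{KK07,Dong12b} (valid where $a$ is measurable in $x^d$), the classical boundary $W^2_p$ estimate (valid where $a$ is fully continuous), and the localization machinery of \cite[Chapter~11]{Kr08} to glue them. Your partition-of-unity plus method-of-continuity outline is exactly that machinery spelled out.

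There is one point you should tighten. The lemma asserts that $N$ depends only on $d,p,\nu,\omega_a,\tilde\omega_a$, and this uniformity over the \emph{class} of coefficients is what is actually used downstream in the proof of Theorem~\ref{thm4}, where the lemma is applied to a family $\hat a^{ij}$ (depending on $x_0$ and $R$) whose moduli are uniformly controlled as $R$ varies. Your Rellich--compactness step, as written, is run for a single fixed $a$ and yields a constant $N(a)$; to upgrade to the class-uniform bound by the same blow-up one would have to pass to the limit in $a_n^{ij}D_{ij}u_n=f_n$ with $a_n$ ranging over all admissible coefficients, but since $a_n$ is merely measurable in $x^d$ it converges at best weak-$*$ in $L_\infty$ while $D^2u_n$ converges only weakly in $L_p$, and the product need not converge to the product of the limits. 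A cleaner route is to bound $\norm{u}_{L_p}$ directly: for $p\ge d$ the Aleksandrov estimate gives $\abs{u}_{0;\,B_1}\le C(d,\nu)\norm{f}_{L_d(B_1)}$, which plugs into your conditional estimate with the correct dependence, and the range $p<d$ can then be handled by duality or by the device in \cite[Chapter~11]{Kr08}. With this adjustment your outline is correct; the two obstacles you did flag (extension of $a$ preserving ellipticity and partial continuity, and uniqueness for $p<d$ via bootstrapping to $W^2_d$ and ABP) are real but routine.
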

\begin{proof}
By following the arguments in \cite[Chapter 11]{Kr08}, the lemma is a consequence of the a priori interior $W^2_p$ estimates proved in \cite{KK07, Dong12b} for elliptic equations with coefficients measurable in one direction and the classical boundary $W^2_p$ estimate for elliptic equations with continuous coefficients.
\end{proof}

\begin{lemma}
                                \label{lem2}
Assume that $a=[a^{ij}]$ are continuous with respect to $(x^1,\ldots,x^{d-1})$ with a modulus of continuity $\omega_a$.
\begin{enumerate}[(i)]
\item
Let $p\in (1,\infty)$.
Assume that  $u\in W^2_p(B_1)$ and satisfies $a^{ij}D_{ij}u=f$ in $B_1$, where $f\in L_p(B_1)$.
Then we have
\[
\norm{u}_{W^2_p(B_{1/2})}\le N \left(\norm{f}_{L_p(B_1)}+\norm{u}_{L_p(B_1)}\right),
\]
where $N$ depends only on $d$,  $\nu$, $p$, and $\omega_a$.

\item
If in addition $f\in L_{\hp}(B_1)$ for some $\hp \in (p,\infty)$, then we have $u\in W^2_{\hp}(B_{1/2})$ and
\[
\norm{u}_{W^2_{\hp} (B_{1/2})}\le N \left( \norm{f}_{L_{\hp}(B_1)}+\norm{u}_{L_p(B_1)} \right),
\]
where $N$ depends only on $d$, $\nu$, $p$, and $\omega_a$.
In particular, if $\hp>d$, it holds that
\[
[Du]_{\gamma; \,B_{1/2}}\le N \left( \norm{f}_{L_{\hp}(B_1)}+\norm{u}_{L_p(B_1)} \right),
\]
where $\gamma=1-d/\hp$.
\end{enumerate}
\end{lemma}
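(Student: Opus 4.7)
The plan is to derive both parts from the Dirichlet solvability on $B_1$ provided by Lemma~\ref{lem1}, combined with a standard cutoff and nested-ball absorption for part (i) and a Sobolev bootstrap for part (ii).

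For (i), I would fix a cutoff $\zeta\in C^\infty_c(B_{2/3})$ with $\zeta\equiv 1$ on $B_{1/2}$ and the usual bounds on $|D^k\zeta|$, set $v:=\zeta u$, and compute
\[
a^{ij}D_{ij}v=\zeta f+2a^{ij}D_i\zeta\, D_j u+a^{ij}u\, D_{ij}\zeta =: g\quad\text{in } B_1,
\]
with $v=0$ off $\mathrm{supp}\,\zeta$. Since $v$ vanishes in a neighborhood of $\partial B_1$, I may modify the coefficients on $\bar B_1\setminus B_{2/3}$ without affecting the equation for $v$: choose $\bar a^{ij}$ equal to $a^{ij}$ on $\bar B_{2/3}$, equal to $\delta^{ij}$ near $\partial B_1$, and interpolated via a partition of unity (together with a mollification of $a^{ij}$ in $x^d$ carried out strictly outside $B_{2/3}$) so that $\bar a^{ij}$ is uniformly elliptic and uniformly continuous on $\bar B_1\setminus B_{2/3}$ with a modulus $\tilde\omega_a$ depending only on $\omega_a$. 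Then $\bar a^{ij}D_{ij}v=g$ on all of $B_1$ with $v=0$ on $\partial B_1$, and Lemma~\ref{lem1} yields
\[
\|v\|_{W^2_p(B_1)}\le N\|g\|_{L_p(B_1)}\le N\bigl(\|f\|_{L_p(B_1)}+\|Du\|_{L_p(B_1)}+\|u\|_{L_p(B_1)}\bigr).
\]
To eliminate the $\|Du\|_{L_p}$ term I would run the same construction on every pair $1/2\le r<R\le 2/3$ with a cutoff satisfying $|D^k\zeta|\le N(R-r)^{-k}$, obtaining
\[
\|D^2 u\|_{L_p(B_r)}\le N\|f\|_{L_p(B_R)}+N(R-r)^{-1}\|Du\|_{L_p(B_R)}+N(R-r)^{-2}\|u\|_{L_p(B_R)},
\]
then apply the interpolation $\|Du\|_{L_p(B_R)}\le\epsilon\|D^2 u\|_{L_p(B_R)}+C\epsilon^{-1}\|u\|_{L_p(B_R)}$ with $\epsilon=(R-r)/(2N)$ and invoke the iteration lemma \cite[Lemma~3.1, p.~161]{Giaq83} to absorb the $\|D^2 u\|_{L_p(B_R)}$ contribution. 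The lower-order $\|Du\|_{L_p(B_{1/2})}$ piece of the full $W^2_p$ norm is then recovered by one more interpolation.

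For (ii), I would bootstrap via Sobolev embedding. Define $p_0:=p$ and inductively $p_{k+1}:=\min(\hat p,\,dp_k/(d-p_k))$ while $p_k<d$, and $p_{k+1}:=\hat p$ as soon as $p_k\ge d$; then $p_K=\hat p$ after finitely many steps. Choose radii $1/2=r_K<r_{K-1}<\cdots<r_0<1$. At the $k$-th step, assuming $u\in W^2_{p_{k-1}}(B_{r_{k-1}})$ (by hypothesis for $k=1$, by the previous step otherwise), the Sobolev embedding gives $Du\in L_{p_k}(B_{r_{k-1}})$, so the localized right-hand side $g_k$ built with a cutoff between $B_{r_k}$ and $B_{r_{k-1}}$ lies in $L_{p_k}$. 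Rescaling $B_{r_{k-1}}$ to $B_1$ and appealing to Lemma~\ref{lem1} at exponent $p_k$ produces a unique $W^2_{p_k}$-solution of the corresponding Dirichlet problem; by the $W^2_{p_{k-1}}$-uniqueness clause of the same lemma this solution must coincide with $\zeta_k u$, hence $u\in W^2_{p_k}(B_{r_k})$ with the quantitative bound inherited from Lemma~\ref{lem1} and part~(i). After $K$ iterations $u\in W^2_{\hat p}(B_{1/2})$ with the claimed estimate, and the $C^{1,\gamma}$ bound with $\gamma=1-d/\hat p$ is immediate from the Morrey--Sobolev embedding $W^2_{\hat p}(B_{1/2})\hookrightarrow C^{1,\gamma}(B_{1/2})$, valid when $\hat p>d$.

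The main technical headache is the coefficient surgery needed to apply Lemma~\ref{lem1}: on the annulus $\bar B_1\setminus B_{2/3}$ the extended $\bar a^{ij}$ must be uniformly continuous in \emph{every} direction, whereas the original $a^{ij}$ is only measurable in $x^d$ there, and the resulting modulus $\tilde\omega_a$ must depend solely on $\omega_a$ so that the constant $N$ in (i) has the asserted dependence. A mollification of $a^{ij}$ in $x^d$ performed strictly away from $B_{2/3}$, patched by a partition of unity to $\delta^{ij}$ near $\partial B_1$, accomplishes this; after this surgery, the remaining steps are routine.
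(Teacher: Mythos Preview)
Your argument is correct, and for part~(ii) it matches the paper's one-line summary (``Sobolev embedding theorem and a bootstrap argument to successively improve the integrability'') essentially verbatim, with your uniqueness-in-$W^2_{p_{k-1}}$ step being the right way to identify the Dirichlet solution with $\zeta_k u$. For part~(i), however, you take a somewhat different route: the paper simply invokes the interior $W^2_p$ estimate of \cite{Dong12b} directly and localizes, whereas you pass through Lemma~\ref{lem1} (the Dirichlet problem on $B_1$) as an intermediary. This detour is what forces the coefficient surgery on the annulus $\bar B_1\setminus B_{2/3}$, which you correctly flag as the main technical headache and which you resolve adequately (the key observation being that $v$ vanishes on a full neighborhood of $\partial B_{2/3}$, so the possible jump of $\bar a^{ij}$ in the $x^d$-direction across that sphere is harmless, and Lemma~\ref{lem1} does not demand continuity across the interface). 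The paper's route avoids this surgery entirely by appealing straight to the a~priori interior estimate from \cite{Dong12b}, which already accommodates coefficients measurable in $x^d$; your route has the compensating virtue of being self-contained within the paper's stated lemmas. Both lead to the same nested-ball iteration via \cite[Lemma~3.1, p.~161]{Giaq83}.
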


\begin{proof}
The first assertion follows from the main result of \cite{Dong12b} by a standard localization argument.
The second assertion is a consequence of the first one, the Sobolev embedding theorem, and a bootstrap argument to successively improve the integrability.
\end{proof}

Here is a counterpart of Lemma~\ref{lem2} for divergence form equations.

\begin{lemma}
                                \label{lem3}
Assume that $a=[a^{ij}]$ are continuous with respect to $(x^1,\ldots,x^{d-1})$ with a modulus of continuity $\omega_a$.
\begin{enumerate}[(i)]
\item
Let $p\in (1,\infty)$.
Assume that $u\in W^1_p(B_1)$ and satisfies
$D_i(a^{ij}D_{j}u)=\dv \vec f$ in $B_1$, where $\vec f= (f^1,\ldots,f^d) \in L_{p}(B_1)$.
Then we have
\[
\norm{u}_{W^1_p(B_{1/2})}\le N \left( \norm{\vec f}_{L_p(B_1)}+\norm{u}_{L_p(B_1)} \right),
\]
where $N$ depends only on $d$,  $\nu$, $p$, and $\omega_a$.

\item
If in addition $\vec f\in L_{\hp}(B_1)$ for some ${\hp}\in (p,\infty)$, then we have $u\in W^1_{\hp}(B_{1/2})$ and
\[
\norm{u}_{W^1_{\hp}(B_{1/2})}\le N \left( \norm{\vec f}_{L_{\hp}(B_1)}+\norm{u}_{L_p(B_1)}\right),
\]
where $N$ depends only on $d$, $\nu$, $p$, and $\omega_a$.
In particular, if ${\hp}>d$, it holds that
\[
[u]_{\gamma; B_{1/2}}\le N \left( \norm{\vec f}_{L_{\hp}(B_1)}+\norm{u}_{L_p(B_1)}\right),
\]
where $\gamma=1-d/{\hp}$.
\end{enumerate}
\end{lemma}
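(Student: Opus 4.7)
The plan is to parallel the proof of Lemma~\ref{lem2} by substituting its non-divergence input with the analogous interior $W^1_p$ theory for divergence form elliptic equations whose coefficients are merely measurable in $x^d$ and uniformly continuous in $(x^1,\ldots,x^{d-1})$, as provided by \cite{KK07, DK09}.

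For assertion (i), my first step would be to invoke the a priori $W^1_p$ estimate for the global/half-space model from \cite{KK07, DK09} and then localize to $B_{1/2}$ via a cutoff $\eta \in C^\infty_c(B_{3/4})$ with $\eta \equiv 1$ on $B_{1/2}$. A direct computation shows that $v := \eta u \in W^1_p(B_1)$ has zero boundary trace and satisfies
\[
D_i(a^{ij} D_j v) = D_i \tilde f^i + h \quad \text{in } B_1,
\]
where $\tilde f^i = \eta f^i + u\, a^{ij} D_j \eta \in L_p(B_1)$ and $h = -f^i D_i \eta + a^{ij} D_j u \cdot D_i \eta$ is supported in $B_{3/4} \setminus B_{1/2}$. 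The residual term $h$ depends on $Du$; I would dispose of it either by representing $h = D_i g^i$ via a Newtonian-type potential so as to fold it into the divergence right-hand side, or by a hole-filling iteration on nested annuli in the spirit of \cite[Lemma~3.1, p.~161]{Giaq83}, absorbing the $\norm{Du}_{L_p}$ contribution at the cost of a larger multiplier on $\norm{u}_{L_p(B_1)}$.

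For assertion (ii), I would run a standard bootstrap. The $W^1_p$ bound from (i) combined with Sobolev embedding gives $u \in L_{p_1}(B_r)$ locally for $p_1 = dp/(d-p)$ when $p < d$ (any finite exponent when $p \ge d$); solving the Dirichlet problem for the same equation on a slightly smaller ball via the divergence counterpart of Lemma~\ref{lem1} then produces a $W^1_{\min(\hp, p_1)}$ solution which uniqueness identifies with $u$. Iterating on a nested sequence of shrinking balls reaches the exponent $\hp$ and yields the claimed $W^1_{\hp}(B_{1/2})$ estimate; Morrey's embedding then supplies the Hölder bound with exponent $\gamma = 1 - d/\hp$ when $\hp > d$.

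The step I expect to be the main technical obstacle is the localization in (i): the divergence structure forces the cutoff to produce a zeroth-order inhomogeneity $h$ depending on $Du$, and eliminating it without circular reasoning requires either a Poisson-type representation of $h$ as a divergence (facilitated by the fact that $h$ is supported in an annulus and has vanishing mean, so $D_i g^i = h$ is solvable in $L_p$ with good estimates) or the hole-filling iteration mentioned above. The bootstrap in (ii) is then routine once (i) and the solvability analog of Lemma~\ref{lem1} are in hand.
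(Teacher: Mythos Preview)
Your proposal is correct and follows essentially the same route as the paper, which dispatches Lemma~\ref{lem3} in one line: ``The proof is similar to that of Lemma~\ref{lem2} by appealing to \cite[Theorem 2.2]{DK09}.'' You have simply unpacked what ``standard localization argument'' and ``bootstrap argument'' mean in the divergence setting, including the genuine wrinkle that the cutoff produces a non-divergence term $a^{ij}D_j u\,D_i\eta$; both remedies you suggest (absorbing it via a Poisson representation, or hole-filling on nested balls) are standard and either works here.
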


\begin{proof}
The proof is similar to that of Lemma~\ref{lem2} by appealing to \cite[Theorem 2.2]{DK09}.
\end{proof}

Now, we turn to the proof of the theorem. We begin with assertion (i).
We take a point $x_0\in B_{1/2}$ and $r,R\in (0,1/4)$ such that $0<r<R/8$.
Let $\hat\zeta\in C_0^\infty(B_1)$ be a smooth cut-off function such that $0\le \hat\zeta\le 1$ in $B_1$, $\hat\zeta=1$ in $B_{1/2}$, and $\hat\zeta=0$ in $B_{1}\setminus B_{2/3}$.
Define
\[
\hat a^{ij}(x)=\hat\zeta ((x-x_0)/R )\,a^{ij}(x_0',x'') + \left(1-\hat\zeta((x-x_0)/R)\right)\delta_{ij}.
\]
Observe that $\hat a^{ij}$ are continuous with respect to $(x^1,\ldots,x^{d-1})$ in $\bar B_{2R/3}(x_0)$ and continuous (with respect to $x$) in $\bar B_{R}(x_0)\setminus B_{2R/3}(x_0)$.
By Lemma~\ref{lem1}, there is a unique solution $w\in W^2_p(B_{R}(x_0))$ of the equation
\[
\hat a^{ij}D_{ij}w=f(x)-f(x_0',x'') +(a^{ij}(x_0',x'')-a^{ij}(x))D_{ij}u
\]
in $B_{R}(x_0)$ with the zero Dirichlet boundary data.
It is crucial to observe that the modulus of continuity of $\hat{a}=\hat a^{ij}$ only improves under the affine transformation of $B_R(x_0)$ to $B_1$ as $R<1$.
Therefore, we have an estimate
\begin{align}
\norm{D^2 w}_{L_p(B_R(x_0))}&\le N \Norm{f(x)-f(x_0',x'') +(a^{ij}(x_0',x'')-a^{ij}(x))D_{ij}u}_{L_p(B_R(x_0))}\nonumber\\
                            \label{eq11.41}
&\le NR^{\hd+d/p}[f]_{x',\hd; \,B_1}
+NR^{\delta}  [a]_{x', \delta; \,B_1}\, \norm{D^2 u}_{L_p(B_R(x_0))}
\end{align}
with a constant $N=N(d, p, \nu, \omega_a)$ that is independent of $R \in (0, 1/4)$.

Since $\hat a^{ij}(x) =a^{ij}(x_0',x'')$ in $B_{R/2}(x_0)$, it is easily seen that $v:=u-w\in  W^2_p(B_{R}(x_0))$ satisfies
\begin{equation}
                            \label{eq11.47}
a^{ij}(x_0',x'')D_{ij}v=f(x_0',x'')\quad \text{in}\quad B_{R/2}(x_0).
\end{equation}
Note that both $a^{ij}(x_0',x'')$ and $f(x_0',x'')$ are independent of $x'$.
By mollification with respect to $x'$, without loss of generality, we may assume that
$\hat v:=D_{x'}v\in W^2_{p}(B_{R/4}(x_0))$.
By differentiating \eqref{eq11.47} with respect to $x'$, we see that $\hat v$ satisfies
\[
a^{ij}(x_0',x'')D_{ij}\hat v=0\quad \text{in}\quad B_{R/4}(x_0).
\]
Clearly, the equation above still holds with
\[
\tilde v:=\hat v-(\hat v)_{B_{R/4}(x_0)}-(x^i-x_0^i)(D_i \hat v)_{B_{R/4}(x_0)}
\]
in place of $\hat v$.

Take any $\gamma\in (\hd,1)$.
By applying Lemma \ref{lem2} with a scaling (the modulus of continuity only improves!) and the Poincar\'e inequality, we get (cf. \eqref{eq12.31mnp} above)
\begin{equation}
                                \label{eq12.31}
\int_{B_r(x_0)}  \Abs{D\hat v-(D\hat v)_{B_r(x_0)}}^p \,dx
\le N\left(\frac{r}{R}\right)^{d+p\gamma}\int_{B_{R/4}(x_0)} \Abs{D\hat v-(D\hat v)_{B_{R/4}(x_0)}}^p \,dx.
\end{equation}
By \eqref{eq11.41} and \eqref{eq12.31},  we reach (cf. \eqref{eq12.43md} above)
\begin{align}
                                        \label{eq12.43}
\int_{B_r(x_0)} &\Abs{DD_{x'}u-(DD_{x'}u)_{B_r(x_0)}}^p \,dx\nonumber\\
&\quad\le N\int_{B_r(x_0)}\Abs{DD_{x'}v-(DD_{x'}v)_{B_r(x_0)}}^p \,dx+N
\int_{B_r(x_0)}\Abs{DD_{x'}w}^p \,dx\nonumber\\
&\quad\le N\left(\frac{r}{R}\right)^{d+p\gamma}\int_{B_{R/4}(x_0)}\Abs{DD_{x'} u-(DD_{x'} u)_{B_{R/4}(x_0)}}^p \,dx \nonumber\\
&\omit\hfill $+NR^{d+p\hd}\left( [f]^p_{x',\hd; \,B_1} + [a]_{x', \delta; \,B_1}^p   \norm{D^2u}^p_{L_p(B_1)}\right).$
\end{align}
By Lemma~\ref{lem:giaq}, we infer from \eqref{eq12.43} that, for all $0<r<1/32$ we have
\begin{align*}
\int_{B_r(x_0)} &\Abs{DD_{x'}u-(DD_{x'}u)_{B_r(x_0)}}^p \,dx\nonumber\\
&\le Nr^{d+p\hd} \left(\int_{B_{1/16}(x_0)}\Abs{DD_{x'} u}^p \,dx+[f]^p_{x',\hd; \,B_1}
+ [a]_{x', \delta; \,B_1}^p \norm{D^2u}^p_{L_p(B_1)}\right)\nonumber\\
&\le Nr^{d+p\hd}\left([f]^p_{x',\hd; \,B_1}
+ \left(1+[a]_{x', \delta; \,B_1}^p\right) \norm{D^2u}^p_{L_p(B_1)}\right).
\end{align*}
Therefore, we obtain \eqref{eq4.11} by Campanato's theorem.
This completes the proof of assertion (i).

The proof of assertion (ii) is similar, and actually simpler. We give the details for the completeness.
We take a point $x_0\in B_{1/2}$ and $r,R\in (0,1/4)$ such that $0<r<R/4$. Let $w\in W^1_2(B_{R}(x_0))$ be the weak solution of the equation
\[
D_i \left(a^{ij}(x_0',x'')D_{j}w\right)=\dv \left(\vec f(x)-\vec f(x_0',x'')\right)+D_i \left(\left( a^{ij}(x_0',x'')-a^{ij}(x)\right)D_{j}u\right)
\]
in $B_{R}(x_0)$ with the Dirichlet boundary condition $w=0$ on $\partial B_{R}(x_0)$.
By the classical $L_2$ estimate for divergence form equations and H\"older's inequality, we have
\begin{align}
\norm{Dw}_{L_2(B_R(x_0))}&\le N \Norm{\vec f(x)-\vec f(x_0',x'') + (a^{ij}(x_0',x'')-a^{ij}(x) )D_{j}u}_{L_2(B_R(x_0))}\nonumber\\
                            \label{eq11.41d}
&\le NR^{\hd+d/2} [\vec f]_{x',\hd; \,B_1}
+NR^{\hd+d/2} [a]_{x', \delta; \,B_1} \norm{Du}_{L_p(B_R(x_0))},
\end{align}
where $N$ depends only on $d$, $p$, and $\nu$. It is easily seen that $v:=u-w\in  W^1_2(B_{R}(x_0))$ satisfies
\begin{equation}
                            \label{eq11.47d}
D_i \left(a^{ij}(x_0',x'')D_{j}v \right)=\dv \vec f(x_0',x'')\quad \text{in}\quad B_{R}(x_0).
\end{equation}
By mollification with respect to $x'$, without loss of generality, we may assume that $\hat v:=D_{x'}v\in W^1_{p}(B_{R/2}(x_0))$. By differentiating \eqref{eq11.47d} with respect to $x'$, we see that $\hat v$ satisfies
\[
D_i \left(a^{ij}(x_0',x'')D_{j}\hat v \right)=0\quad \text{in}\quad B_{R/2}(x_0).
\]
Clearly, the equation above still holds with
\[
\tilde v:=\hat v-(\hat v)_{B_{R/2}(x_0)}
\]
in place of $\hat v$.
Take any $\gamma\in (\hd,1)$.
By applying Lemma~\ref{lem3} with a scaling (the modulus of continuity only improves!) and the Poincar\'e inequality, we get (cf. \eqref{eq12.31mnp} above)
\begin{align}
                                \label{eq12.31d}
\int_{B_r(x_0)}\Abs{\hat v-(\hat v)_{B_r(x_0)}}^2 \,dx
&=\int_{B_r(x_0)}\Abs{\tilde v-(\tilde v)_{B_r(x_0)}}^2 \,dx\nonumber\\
&\le Nr^{d+2\gamma}[\tilde v]^2_{\gamma; \,B_{R/4}(x_0)}\le N\left(\frac{r}{R}\right)^{d+2\gamma}\int_{B_{R/2}(x_0)} \abs{\tilde v}^2 \,dx\nonumber\\
&\le N\left(\frac{r}{R}\right)^{d+2\gamma}\int_{B_{R/2}(x_0)}\Abs{\hat v-(\hat v)_{B_{R/2}(x_0)}}^2 \,dx.
\end{align}
Then, similar to \eqref{eq12.43md}, we get from \eqref{eq11.41d} and \eqref{eq12.31d} that
\begin{multline}
                                        \label{eq12.43d}
\int_{B_r(x_0)} \Abs{D_{x'}u-(D_{x'}u)_{B_r(x_0)}}^2 \,dx
\le N\left(\frac{r}{R}\right)^{d+2\gamma}\int_{B_{R}(x_0)} \Abs{D_{x'}u-(D_{x'} u)_{B_{R}(x_0)}}^2 \,dx \\
+NR^{d+2\hd}\left([\vec f]^2_{x',\hd; \,B_1} + [a]_{x', \delta; \, B_1}^2 \norm{Du}^2_{L_p(B_1)}\right).
\end{multline}
By Lemma~\ref{lem:giaq} and \eqref{eq12.43d}, for all $0<r<1/16$ we have
\begin{align*}
\int_{B_r(x_0)} & \Abs{D_{x'}u-(D_{x'}u)_{B_r(x_0)}}^2 dx\nonumber\\
&\le Nr^{d+2 \hd}\left(\int_{B_{1/4}(x_0)} \Abs{D_{x'} u}^2 dx+[\vec f]^2_{x',\hd; B_1}
+[a]_{x', \delta; B_1}^2 \norm{Du}^2_{L_p(B_1)}\right)\nonumber\\
&\le Nr^{d+2 \hd}\left([\vec f]^2_{x',\hd; \,B_1}
+ \left(1+[a]_{x', \delta; \,B_1}^2\right)  \norm{Du}^2_{L_p(B_1)}\right).
\end{align*}
Therefore, by Campanato's theorem, we get \eqref{eq4.12}.
The theorem is proved.
\hfill\qedsymbol

\subsection{Remarks on equations with lower-order terms}
                            \label{sec3.5}
In this subsection, we illustrate how to extend the results in Section \ref{sec:m} to operators with lower-order terms in non-divergence form
\[
Lu:=a^{ij}(x)D_{ij}u+b^i(x) D_iu+c(x)u,
\]
and in divergence form
\[
\cL u:=D_i(a^{ij}(x)D_j u+\hat b^i(x)u)+b^i(x) D_i u+c(x)u,
\]
where the lower-order coefficients are bounded and measurable, which satisfies
$\abs{b^i}$, $\abs{\hat b^i}$, $\abs{c} \le K$ for some $K\ge 0$.
As in the classical Schauder theory, the idea is to move lower-order terms to the right-hand side. However, because we only have estimates of partial H\"older norms, the argument here is a bit more involved.

Theorem \ref{thm1}~(ii) still holds if we assume, for instance, $b=[b^{i}]$ are independent of $x'$, and $c\in C^\delta_{x'}$.
In this case, by using the Krylov--Safonov estimate and the method of the finite difference quotients, it is easily seen that $u\in C^{\delta}_{x'}(B_r)$ for any $r\in (0,1)$, so one can move the zeroth-order term $cu$ to the right-hand of the equation.
Because $b$ are independent of $x'$ and in the proof $\kappa r\le 1$, the first-order term in the equation does not cause any trouble.

In the same fashion, Theorem \ref{thm1}~(iii) can be extended to the case when $b$ are independent of $x'$, and $\hat b=[\hat b^i]\in C^\delta_{x'}$.
Moreover, we can add $g\in L_p(B_1)$ to the right-hand side of the equation, where $p=d/(1-\delta)$.
To see this, first the term $cu$ can be absorbed to $g$. Now let $w\in W^2_p(B_1)$ be the unique solution to the equation $\Delta w=g$ in $B_1$ with the zero Dirichlet boundary condition on $\partial B_1$.
Then by the classical $W^2_p$ estimate and the Sobolev embedding theorem, we have
\[
[Dw]_{\delta;\, B_1}\le N \norm{w}_{W^2_p(B_1)} \le N \norm{g}_{L_p(B_1)},
\]
so we can rewrite the right-hand side as  $\dv (\vec f+\nabla w)$.
Similar to the reasoning above, we have $u\in C^{\delta}_{x'}(B_r)$ for any $r\in (0,1)$.
Thus the term $D_i(\hat b^i u)$ can be absorbed to the right-hand side as well.

In Theorem \ref{thm4m}~(i), we may assume that $b, c\in C^{\hd}_{x'}$.
Indeed, since $1-d/p\ge\delta-d/p\ge \hd$, by the Sobolev embedding theorem, we have $Du$, $u\in C^{\hd}(B_1)$.
Therefore, we can move the lower-order terms to the right-hand side.
Theorem \ref{thm4m}~(ii) can be extended to the case when $\hat b \in C^{\hd}_{x'}$ with an additional term $g\in L_p(B_1)$ on the right-hand side.
By the Sobolev embedding theorem, $u\in C^{\hd}(B_1)$.
Thus, $\hat b^i u$ can be absorbed to $\vec f$, and $b^i Du+cu$ can be absorbed to $g$.
The same extension can be carried out in Theorems \ref{thm4mn} and \ref{thm4}.

\mysection{Main results for parabolic equations}
                        \label{sec:p}

In this section, we consider parabolic operators in non-divergence form
\begin{equation}
				\label{eq0.3}
Pu:=u_t-a^{ij}(t,x) D_{ij}u
\end{equation}
and in divergence form
\begin{equation}
				\label{eq0.4}
\cP u:=u_t- D_i(a^{ij}(t,x)D_j u)
\end{equation}
where $t\in \bR$ and $x=(x',x'')\in \bR^d$.
Here, we assume the coefficients $a^{ij}(t,x)$ are bounded measurable functions on $\bR^{d+1}$ and satisfy the uniform ellipticity condition
\begin{equation}
				\label{parabolic}
\nu \abs{\xi}^2\le a^{ij}(t,x)\xi^i\xi^j\le \nu^{-1} \abs{\xi}^2,\quad\forall (t,x)\in\bR^{d+1},\,\, \xi\in \bR^d,
\end{equation}
for some constant $\nu\in (0,1]$.
As in the elliptic case we assume the symmetry of the coefficients for the non-divergence form operators $P$ but for the operators $\cP$ in divergence form, we instead assume that $\sum_{i,j=1}^d  \abs{a^{ij}(t,x)}^2\le \nu^{-2}$ for all $(t,x) \in \bR^{d+1}$.

For a function $u(t,x)=u(t,x',x'')$ on $Q\subset \bR^{d+1}$, we define a partial H\"older semi-norm with respect to $x'$ as
\[
[u]_{x',\delta; \,Q}:=\!\!\sup_{\substack{(t,x',x''), \,(t,y',x'')\in Q\\x'\neq y'}}\frac {\abs{u(t,x',x'')-u(t,y',x'')}}{\abs{x'-y'}^\delta}.
\]
Similarly, we define the partial H\"older semi-norm with respect to $t$ as
\[
[u]_{t,\delta; \,Q}:=\!\!\sup_{\substack{(t,x), \,(s,x)\in Q\\t\neq s}}\frac {\abs{u(t,x)-u(s,x)}}{\abs{t-s}^\delta},
\]
and the partial H\"older semi-norms with respect to $z':=(t,x')$ as
\[
[u]_{z',\delta/2,\delta; \,Q}:=[u]_{t,\delta/2; \,Q}+[u]_{x',\delta;\, Q},\quad
[u]_{z',(1+\delta)/2,1+\delta; \,Q}:=[u]_{t,(1+\delta)/2; \,Q}+[D_{x'}u]_{z',\delta/2,\delta; \,Q}.
\]
It should be made clear that the above definitions for $[u]_{z',\delta/2,\delta; \,Q}$ and $[u]_{z',(1+\delta)/2,1+\delta; \,Q}$ are equivalent to those defined in \cite{DK11} with $Q=\bR^{d+1}$.
Other related definitions such as $[u]_{x',k+\delta; \,Q}$, $[u]_{t, k+\delta; \,Q}$, and $[u]_{z',(k+\delta)/2,k+\delta;\,Q}$ ($k=0,1,2,\ldots$) are accordingly extended to functions $u=u(t,x)$ on $Q$.
The function spaces $C^{k+\delta}_{x'}(Q)$, $C_t^{k+\delta}(Q)$, and $C^{(k+\delta)/2,\, k+\delta}_{z'}(Q)$ are defined accordingly for $k=0, 1,2, \ldots$.

We say that $u\in W^{1,2}_p(Q)$ for some $p\ge 1$ if $u$ and its weak derivatives $D u$, $D^2 u$, and $u_t$ are in $L_p(Q)$.
For $p\in (1,\infty)$, we say that $u\in W^{1,2}_{p; \,loc}$ is a strong solution of $Pu=f$ if $u$ satisfies the equation $Pu=f$ a.e.

We also denote $\bH^{-1}_{p}(Q)$ to be the space consisting of all functions $u$ satisfying
\[
\inf \set{ \norm{\vec g}_{L_{p}(Q)}+\norm{h}_{L_{p}(Q)}\,:\,u=\dv \vec g+h}<\infty.
\]
It is easy to see that $\bH^{-1}_{p}(Q)$ is a Banach space. Naturally, for any $u\in \bH^{-1}_{p}(Q)$, we define the norm
\[
\norm{u}_{\bH^{-1}_{p}(Q)}=\inf\set{\norm{\vec g}_{L_{p}(Q)}+\norm{h}_{L_{p}(Q)}\,:\,u=\dv \vec g+h}.
\]
We also define
\[
\cH^{1}_{p}(Q)= \set{u:\, u, \,Du \in L_{p}(Q),\,u_t \in \bH^{-1}_{p}(Q)}.
\]
Finally, for $z=(t,x)\in \bR^{d+1}$ and $\rho>0$, let $Q_\rho(z)=(t-\rho^2,t)\times B_\rho(x)$ and $\partial_p Q_\rho(z)$ be its parabolic boundary.
We write $Q_r=Q_r(0)$ for simplicity.
The next theorem is a parabolic counterpart of Theorem~\ref{thm1}.
Hereafter, we denote
\[
\bR^{d+1}_{0} :=(-\infty,0)\times \bR^d.
\]

\begin{theorem}
                                    \label{thm2}

\begin{enumerate}[(i)]
\item
Let $u\in W^{1,2}_{d+1; \,loc}(\bR^{d+1}_0)$ be a bounded strong solution of the equation
\[
P u=f \quad \text{in}\,\,\bR^{d+1}_0.
\]
If $a=[a^{ij}]$ are independent of $x'$ and $f \in C^{\delta}_{x'}(\bR_0^{d+1})$, then $u\in C_{x'}^{2+\delta}(\bR_0^{d+1})$ and there is a constant $N=N(d,q,\delta,\nu)$ such that
\[
[u]_{x',2+\delta;\,\bR^{d+1}_0}\le N[f]_{x',\delta; \,\bR^{d+1}_0}.
\]
If $a=[a^{ij}]$ are independent of $t$ and $f \in C^{\delta/2}_{t}(\bR_0^{d+1})$, then $u\in C_{t}^{1+\delta/2}(\bR_0^{d+1})$ and there is a constant $N=N(d,q,\delta,\nu)$ such that
\[
[u]_{t,1+\delta/2; \,\bR^{d+1}_0}\le N[f]_{t,\delta/2 ;\,\bR^{d+1}_0}.
\]

\item
Let $u\in W^{1,2}_{d+1; \,loc}(Q_1)$ be a bounded strong solution of the equation
\[
P u=f \quad \text{in}\,\,Q_1.
\]
If $a=[a^{ij}]$ are independent of $x'$ and $f \in C^{\delta}_{x'}(Q_1)$, then $u\in C_{x'}^{2+\delta}(Q_{1/2})$ and there is a constant $N=N(d,q,\delta,\nu)$ such that
\[
[u]_{x',2+\delta; \,Q_{1/2}}\le N \left( [f]_{x',\delta;\,Q_1}+ \abs{u}_{0; \,Q_1} \right).
\]
If $a=[a^{ij}]$ are independent of $t$ and $f\in C^{\delta/2}_{t}(Q_1)$,
then $u\in C_{t}^{1+\delta/2}(Q_{1/2})$ and there is a constant $N=N(d,q,\delta,\nu)$ such that
\[
[u]_{t,1+\delta/2; \,Q_{1/2}}\le N \left( [f]_{t,\delta/2;\,Q_1}+ \abs{u}_{0; \,Q_1} \right).
\]
\item
Let $u\in \cH^{1}_{2}(Q_1)$ be a bounded weak solution of the equation
\[
\cP u= \dv \vec f \quad \text{in}\,\,Q_1.
\]
If $a=[a^{ij}]$ are independent of $x'$ and $\vec f \in C^{\delta}_{x'}(Q_1)$, then $u\in C_{x'}^{1+\delta}(Q_{1/2})$ and there is a constant $N=N(d,q,\delta,\nu)$ such that
\[
[u]_{x',1+\delta; \,Q_{1/2}}\le N \left( [\vec f]_{x',\delta; \,Q_1}+ \abs{u}_{0; \,Q_1} \right).
\]
If $a=[a^{ij}]$ are independent of $t$ and $\vec f\in C^{\delta/2}_{t}(Q_1)$,
then $u\in C_{t}^{(1+\delta)/2}(Q_{1/2})$ and there is a constant $N=N(d,q,\delta,\nu)$ such that
\[
[u]_{t,(1+\delta)/2; \,Q_{1/2}}\le N \left( [\vec f]_{t,\delta/2; \,Q_1}+ \abs{u}_{0; \,Q_1} \right).
\]
\end{enumerate}
\end{theorem}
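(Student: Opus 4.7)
The plan is to mirror the elliptic argument for Theorem~\ref{thm1}, adapting the Safonov--Trudinger decomposition and partial mollification to the parabolic setting. In each of (i), (ii), (iii), the two sub-cases (coefficients independent of $x'$ vs.\ independent of $t$) run in parallel: in the first, partial mollification is taken in $x'$ and best approximation is by elements of $\tilde\bP_2$; in the second, partial mollification is taken in $t$ and best approximation is by polynomials in $t$ of degree at most one, matching the target regularity $C^{1+\delta/2}_t$ under the parabolic scaling $t\sim r^2$.

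For part (i) (case $a$ independent of $x'$), assume first $u\in C^{2+\delta}_{x'}(\bR^{d+1}_0)$. Mollify to produce continuous $a^{ij}_n$, still independent of $x'$ and satisfying \eqref{parabolic}, with $a^{ij}_n\to a^{ij}$ a.e.; then $P_n u=f_n:=f+(a^{ij}_n-a^{ij})D_{ij}u$, and since the mollified coefficients commute with partial mollification in $x'$, also $P_n\tu^{\kappa r}=\tilde f_n^{\kappa r}$. Fix $z_0\in\bR^{d+1}_0$ and let $w\in W^{1,2}_{d+1;\,loc}(Q_{\kappa r}(z_0))\cap C(\overline{Q_{\kappa r}(z_0)})$ be the unique solution of $P_n w=0$ in $Q_{\kappa r}(z_0)$ with $w=u-\tu^{\kappa r}$ on $\partial_p Q_{\kappa r}(z_0)$, obtained from classical parabolic $W^{1,2}_{d+1}$ theory. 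The parabolic Aleksandrov--Bakelman--Tso estimate combined with Lemma~\ref{lem12.25}(ii) yields
\[
\sup_{Q_{\kappa r}(z_0)}\abs{w}\le N(\kappa r)^{2+\delta}[u]_{x',2+\delta;\,Q_{2\kappa r}(z_0)}.
\]
Finite-difference quotients in $x'$ (legal because $a^{ij}_n$ does not depend on $x'$) together with the parabolic Krylov--Safonov H\"older estimate and bootstrapping give
\[
\abs{D^j_{x'}w}_{0;\,Q_{\kappa r/2}(z_0)}\le (\kappa r)^{-j}N(j,d,q,\nu)\,\abs{w}_{0;\,Q_{\kappa r}(z_0)}\qquad (j\ge 1).
\]
Taking $j=3$ to control $w-\tT^2_{x_0'}w$ on $Q_r(z_0)$, bounding $\tu^{\kappa r}-\tT^2_{x_0'}\tu^{\kappa r}$ via Lemma~\ref{lem12.25}(i), and bounding $v:=u-\tu^{\kappa r}-w$ by the parabolic ABP applied to $P_n v=f_n-\tilde f_n^{\kappa r}$, I obtain after sending $n\to\infty$ (dominated convergence absorbs the $(a^{ij}_n-a^{ij})D_{ij}u$ term) the Campanato-type bound
\[
r^{-2-\delta}\inf_{p\in\tilde\bP_2}\abs{u-p}_{0;\,Q_r(z_0)}\le N\kappa^{\delta-1}[u]_{x',2+\delta;\,Q_{2\kappa r}(z_0)}+N\kappa^{2+\delta}[f]_{x',\delta;\,Q_{2\kappa r}(z_0)}.
\]
An appropriate parabolic version of \cite[Theorem 3.3.1]{Kr96} converts this into a global estimate on $[u]_{x',2+\delta;\,\bR^{d+1}_0}$; choosing $\kappa$ so large that $N\kappa^{\delta-1}\le 1/2$ lets the first term be absorbed. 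The a priori assumption $u\in C^{2+\delta}_{x'}$ is removed by running the argument on $\tu^{1/n}$ (which solves $P\tu^{1/n}=\tilde f^{1/n}$ since $a$ is $x'$-independent) and passing to the limit using local uniform convergence.

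The case of coefficients independent of $t$ proceeds identically, now with partial mollification in $t$ alone: the $t$-analogue of Lemma~\ref{lem12.25} bounds $\abs{u-\tu^{\kappa r,t}}$ at rate $(\kappa r)^{2(1+\delta/2)}=(\kappa r)^{2+\delta}$ and first-order $t$-derivatives of $\tu^{\kappa r,t}$ at rate $(\kappa r)^{2(\delta/2)-2}$, which is exactly the parabolic matching needed, so the same chain of inequalities goes through with polynomials of degree $\le 1$ in $t$ in place of $\tilde\bP_2$. Parts (ii) and (iii) follow the elliptic blueprint: for (ii), work with nested cylinders $Q^{(n)}=Q_{r_n}$, $r_n=3/4-2^{-n-1}$, combine the interior bound on scales $r\le 2^{-n-3}/\kappa$ with the trivial bound $r^{-2-\delta}\abs{u}_{0;Q_r(z_0)}\le(2^{n+3}\kappa)^{2+\delta}\abs{u}_{0;\,Q_1}$ on larger scales, then multiply by $10^{-n}$ and sum to absorb; for (iii), repeat on divergence form, replacing ABP by the parabolic energy inequality and Krylov--Safonov by De Giorgi--Nash--Moser, exactly as in \cite[Theorem 2.14]{DK11}.

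The main obstacle I anticipate is the formulation of the correct parabolic analogue of \cite[Theorem 3.3.1]{Kr96}, especially in the $t$-direction: the anisotropy forces the partial Taylor polynomial in $t$ to be of degree only one while the decay rate on parabolic cylinders of spatial radius $r$ (time-width $r^2$) must be $r^{2+\delta}$, which has to be balanced carefully against the $C^{1+\delta/2}_t$ seminorm. Relatedly, the Lemma~\ref{lem12.25}-type bounds for partial mollification in $t$ must be stated with these anisotropic exponents. Once these two ingredients are in place, the remainder is a transcription of the proof of Theorem~\ref{thm1}.
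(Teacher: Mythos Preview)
Your proposal is correct and follows essentially the same route as the paper's proof. The obstacle you anticipate regarding the one-sided $t$-mollification is resolved in the paper by the explicit construction $\tv^\epsilon(t,x):=\int_{0}^\infty\bigl(2v(t-\epsilon^2 s,x)-v(t-2\epsilon^2 s,x)\bigr)\eta(s-1)\,ds$, whose built-in first-order cancellation yields $\abs{v(z_0)-\tv^\epsilon(z_0)}\le N\epsilon^{2+\delta}[v]_{t,1+\delta/2;\,Q_\epsilon(z_0)}$; with this in hand, the rest is exactly the transcription you outline (note that for the $t$-case one uses $\abs{w-\tT^1_{t_0}w}_{0;\,Q_r}\le Nr^4\abs{D_t^2 w}_{0;\,Q_r}$ and $\abs{D_t^2 w}_{0;\,Q_{\kappa r/2}}\le N(\kappa r)^{-4}\abs{w}_{0;\,Q_{\kappa r}}$, so it is the \emph{second} $t$-derivative that enters).
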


We note that Theorem~\ref{thm2} (i) is an improvement of \cite[Theorem~2.13]{DK11}, where continuity of the coefficients was assumed.
By using an interpolation inequality in Lemma \ref{lem6.1}, Theorem~\ref{thm2} (i) also provides an improvement of \cite[Theorem~2.15]{DK11}.
For the sake of record, we state it as a corollary.
It should be also noted that the same can be said to other results in this section regarding estimates for $[u]_{z'}$.

\begin{corollary}				\label{cor4.5}
Let $u\in W^{1,2}_{d+1; \,loc}(\bR_0^{d+1})$ be a bounded strong solution of the equation
\[
P u=f \quad \text{in}\,\,\bR_0^{d+1}.
\]
If $a=[a^{ij}]$ are independent of $z'$ and $f\in C^{\delta/2,\delta}_{z'}(\bR_0^{d+1})$, then $u\in C_{z'}^{1+\delta/2,2+\delta}(\bR_0^{d+1})$ and there is a constant $N=N(d,q,\delta,\nu)$ such that
\[
[u]_{z',1+\delta/2,2+\delta; \,\bR^{d+1}_0}\le N[f]_{z',\delta/2,\delta; \,\bR^{d+1}_0}.
\]
\end{corollary}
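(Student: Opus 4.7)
The plan is to apply Theorem~\ref{thm2}~(i) twice, once in each of its two variants, and then invoke the interpolation inequality of Lemma~\ref{lem6.1} from the Appendix to glue the two separate partial H\"older estimates into the full mixed parabolic seminorm.

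The hypothesis that $a=[a^{ij}]$ is independent of $z'=(t,x')$ is stronger than independence from $x'$ alone, and is also stronger than independence from $t$ alone, so both halves of Theorem~\ref{thm2}~(i) are applicable to the same solution $u$. Since $f\in C^{\delta/2,\delta}_{z'}(\bR_0^{d+1})$ implies $f\in C^\delta_{x'}(\bR_0^{d+1})$, the first half yields
\[
[u]_{x',2+\delta;\,\bR^{d+1}_0}\le N[f]_{x',\delta;\,\bR^{d+1}_0}\le N[f]_{z',\delta/2,\delta;\,\bR^{d+1}_0},
\]
and since $f\in C^{\delta/2}_{t}(\bR_0^{d+1})$ as well, the second half yields
\[
[u]_{t,1+\delta/2;\,\bR^{d+1}_0}\le N[f]_{t,\delta/2;\,\bR^{d+1}_0}\le N[f]_{z',\delta/2,\delta;\,\bR^{d+1}_0}.
\]

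To finish the proof I would invoke Lemma~\ref{lem6.1} to interpolate between these two axis-aligned partial H\"older estimates and thereby control the mixed cross quantities---such as $[D^2_{x'}u]_{t,\delta/2;\,\bR^{d+1}_0}$ and $[u_t]_{x',\delta;\,\bR^{d+1}_0}$---that comprise the remainder of the seminorm $[u]_{z',1+\delta/2,2+\delta;\,\bR^{d+1}_0}$. The main difficulty lies here: Theorem~\ref{thm2}~(i) delivers only the two one-directional seminorms, and the parabolic scaling, which pairs time regularity of order $\delta/2$ with spatial regularity of order $\delta$, must be exploited by the appendix lemma in order to convert this separated information into the combined mixed seminorm without picking up any additional dependence on $u$ beyond what is already bounded by the pair of estimates above.
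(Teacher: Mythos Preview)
Your proposal is correct and follows exactly the route the paper intends: apply both variants of Theorem~\ref{thm2}~(i) and then combine via Lemma~\ref{lem6.1}. Your worry about picking up an extra lower-order term is unfounded, because the global version \eqref{eq9.18} of Lemma~\ref{lem6.1} (obtained from \eqref{eq12.17} by sending $R\to\infty$) reads precisely $[u]_{z',1+\delta/2,2+\delta;\,\bR^{d+1}_0}\le N\bigl([u]_{x',2+\delta;\,\bR^{d+1}_0}+[u]_{t,1+\delta/2;\,\bR^{d+1}_0}\bigr)$, with no residual $|u|_0$ term.
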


Next we consider parabolic equations in non-divergence form \eqref{eq0.3} and in divergence form \eqref{eq0.4} with coefficients depending on all the variables.

\begin{theorem}
                                    \label{thm5m}
Let $\delta\in (0,1]$ and $p\in (d+2,\infty)$ be such that $\delta-(d+2)/p>0$. Then, there exists a constant $\delta_0=\delta_0(d,\nu)>0$ such that the following assertions hold with any $\hd\in (0,\delta_0)$ satisfying $\hd\le \delta-(d+2)/p$.

\begin{enumerate}[(i)]
\item
Let $u\in W^{1,2}_{p}(Q_1)$ be a strong solution of the equation
\[
P u=f \quad \text{in }\,Q_1.
\]
If $a=[a^{ij}]$ are $\delta$-H\"older continuous in $x'$ and $f \in C^{\hd}_{x'}(Q_1)$, then $D^2_{x'}u\in C^{\hd/2,\hd}(Q_{1/2})$ and there is a constant $N=N(d, q, \nu, \delta,p)$ such that
\begin{equation}
                                            \label{eq4.11pm}
[D^2_{x'}u]_{\hd/2,\hd; \,Q_{1/2}}\le N \left( [f]_{x',\hd; \,Q_1}+ \left(1+ [a]_{x', \delta; \,Q_1}\right) \norm{D^2 u}_{L_p(Q_1)} \right).
\end{equation}
If $a=[a^{ij}]$ are $\delta/2$-H\"older continuous in $t$ and $f\in C^{\hd/2}_{t}(Q_1)$, then $u_t\in C^{\hd/2,\hd}(Q_{1/2})$
and there is a constant $N=N(d, \nu, \delta,p)$ such that
\begin{equation}
                                            \label{eq4.11pmz}
[u_t]_{\hd/2,\hd; \,Q_{1/2}}\le N \left( [f]_{t,\hd/2; \,Q_1}+ \norm{f}_{L_1(Q_1)} + \left(1+ [a]_{t,\delta/2; \,Q_1}\right) \norm{D^2 u}_{L_p(Q_1)} \right).
\end{equation}

\item
Let $u\in \cH^1_p(Q_1)$ be a weak solution of the equation
\[
\cP u=\dv \vec f \quad \text{in }\,Q_1.
\]
If $a=[a^{ij}]$ are $\delta$-H\"older continuous in $x'$ and $\vec f \in C^{\hd}_{x'}(Q_1)$, then $D_{x'}u\in C^{\hd/2, \hd}(Q_{1/2})$ and there is a constant $N=N(d, q, \nu, \delta, p)$ such that
\begin{equation}
                                            \label{eq4.12pm}
[D_{x'}u]_{\hd/2,\hd;\,Q_{1/2}}\le N \left( [\vec f]_{x',\hd;\,Q_1}+\left(1+ [a]_{x', \delta;\,Q_1}\right)\norm{Du}_{L_p(Q_1)} \right).
\end{equation}
If $a=[a^{ij}]$ are $\delta/2$-H\"older continuous in $t$ and $\vec f\in C^{\hd/2}_{t}(Q_1)$, then $u\in C^{(1+\hd)/2}_t(Q_{1/2})$ and there is a constant $N=N(d, \nu, \delta, p)$ such that
\begin{equation}
                                            \label{eq4.12pmz}
[u]_{t,(1+\hd)/2;\,Q_{1/2}}\le N \left( [\vec f]_{t,\hd/2;\,Q_1}+\norm{\vec f}_{L_{1}(Q_{1})}+\left(1+ [a]_{t,\delta/2;\,Q_1}\right)\norm{Du}_{L_p(Q_1)} \right).
\end{equation}
For \eqref{eq4.12pmz}, the condition $\hd\in (0,\delta_0)$ is not needed.
\end{enumerate}
\end{theorem}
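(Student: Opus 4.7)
The plan is to carry the Campanato-type iteration from the proofs of Theorems~\ref{thm4m} and \ref{thm4mn} into the parabolic setting, replacing balls by parabolic cylinders $Q_r$ and replacing their key elliptic building blocks by: the parabolic Krylov--Safonov estimate for \eqref{eq0.3}, the parabolic De Giorgi--Nash--Moser estimate for \eqref{eq0.4}, a parabolic analog of F.\,Lin's $W^{1,2}_\epsilon$ bound, and a parabolic $L_\epsilon$ version of Campanato's characterization of H\"older continuous functions. For each of the four claims, I would fix $z_0=(t_0,x_0)\in Q_{1/2}$ and $0<r<R/4$, and split $u=v+w$ on $Q_R(z_0)$, where $w$ solves an initial--boundary value problem absorbing the oscillations of both $a$ and the data around their frozen values, and $v$ solves the corresponding equation with frozen coefficients and frozen data. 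The target is a decay estimate of mean oscillation of the relevant derivative of $u$ of the form $(r/R)^{(d+2)+\epsilon\hd}$ plus $R$-powers of the H\"older data, from which Lemma~\ref{lem:giaq} and Campanato's theorem produce the desired H\"older estimate.

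\textbf{The $x'$-frozen cases.} When $a$ is H\"older in $x'$, I would freeze coefficients at $x_0'$. Since $a(t,x_0',x'')$ is independent of $x'$, differentiating the frozen equation twice in $x'$ (for (i)) or once in $x'$ (for (ii)) preserves it, so $D_{x'}^2 v$ solves a homogeneous non-divergence parabolic equation and $D_{x'}v$ solves a homogeneous divergence parabolic equation, exactly analogous to \eqref{eq12.31m} and \eqref{eq12.31md}. Parabolic Krylov--Safonov (respectively, parabolic De Giorgi--Nash--Moser) applied to the shifted solution $D_{x'}^2 v - c$ (respectively, $D_{x'}v-(D_{x'}v)_{Q_R(z_0)}$) furnishes the small-multiple term, while the $w$-contribution is handled by a parabolic Lin-type $W^{1,2}_\epsilon$ estimate for $w$ in case (i) or by the standard parabolic $W^1_2$ energy estimate in case (ii), together with H\"older's inequality applied to the oscillations of $f$ and $a$ as in \eqref{eq11.41m} and \eqref{eq11.41md}. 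A mollification argument in $x'$ like the one justifying \eqref{eq8.54} legitimizes all the differentiations rigorously. This gives \eqref{eq4.11pm} and \eqref{eq4.12pm}.

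\textbf{The $t$-frozen cases and main obstacle.} When $a$ is H\"older in $t$, I freeze $t=t_0$ instead. Now $a(t_0,x)$ is independent of $t$, and differentiating the frozen equation in $t$ kills the frozen right-hand side $f(t_0,x)$ (respectively $\dv \vec f(t_0,x)$) because it is $t$-independent, so $v_t$ solves the homogeneous parabolic equation and the same Krylov--Safonov / De Giorgi--Nash--Moser machinery applies, this time to $v_t$ rather than to $D_{x'}^k v$. The $\norm{f}_{L_1(Q_1)}$ term appearing in \eqref{eq4.11pmz} arises because the $L_\epsilon$ oscillation of $u_t$ must be transferred to that of $v_t$ through $w_t$, which we only control by means of the equation for $w$; an $L_1$ bound on the frozen source is the cheapest way to close this transfer. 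For \eqref{eq4.12pmz}, rather than differentiating $u$ in $t$ strongly I would extract the $(1+\hd)/2$ exponent from the standard parabolic energy estimate combined with the H\"older-in-$t$ regularity of $\vec f$ via the interpolation identity underlying Lemma~\ref{lem6.1}; this parabolic interpolation, carried through the iteration scheme while tracking the extra $\norm{\vec f}_{L_1(Q_1)}$ term, is the main technical obstacle. It also explains why the upper bound $\hd<\delta_0$, required elsewhere to invoke Krylov--Safonov or De Giorgi--Nash--Moser H\"older regularity, can be dropped in this last assertion.
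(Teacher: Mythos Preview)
Your approach for the first three estimates \eqref{eq4.11pm}, \eqref{eq4.11pmz}, and \eqref{eq4.12pm} is essentially the paper's: a parabolic Campanato iteration with coefficients frozen at $x_0'$ (or $t_0$), a parabolic Lin-type $W^{1,2}_\epsilon$ bound for the correction $w$ in the non-divergence case (Lemma~\ref{lem4p}), Krylov--Safonov or De Giorgi--Nash--Moser for the homogeneous piece, then Lemma~\ref{lem:giaq} and the $L_\epsilon$ Campanato lemma. One minor correction: in \eqref{eq4.11pmz} the term $\norm{f}_{L_1(Q_1)}$ does not enter through the $w_t$ transfer but when bounding $\sup_{z_0,r}\phi(z_0,r)$ by $\norm{u_t}_{L_1(Q_1)}^\epsilon$ and then writing $\norm{u_t}_{L_1}\le N\norm{D^2u}_{L_1}+\norm{f}_{L_1}$ from the equation itself.

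The genuine gap is in \eqref{eq4.12pmz}. Your plan to ``extract the $(1+\hd)/2$ exponent from the standard parabolic energy estimate combined with the H\"older-in-$t$ regularity of $\vec f$ via the interpolation identity underlying Lemma~\ref{lem6.1}'' is not a concrete argument: Lemma~\ref{lem6.1} interpolates between partial H\"older semi-norms that are already known, but neither $[u]_{x',1+\hd}$ nor $[u]_{t,(1+\hd)/2}$ is available from an energy estimate alone, and in divergence form $u_t$ is only an element of $\bH^{-1}_p$, so one cannot run the Campanato scheme on $u_t$ or $v_t$ as functions. The paper therefore abandons the Campanato iteration for this estimate and reverts to the Safonov-type mollification method of Theorems~\ref{thm1} and~\ref{thm2}: take a partial mollification $\tu^{\kappa r}$ in $t$, solve $\cP_0 w=0$ (coefficients frozen at $t_0$) in $Q_{\kappa r}(z_0)$ with boundary data $u-\tu^{\kappa r}$ on $\partial_p Q_{\kappa r}(z_0)$, and apply the \emph{weak maximum principle} to bound $\abs{w}_0$; since $a(t_0,x)$ is $t$-independent this yields $\abs{D_t w}_{0;\,Q_{\kappa r/2}}\le N(\kappa r)^{-2}\abs{w}_{0;\,Q_{\kappa r}}$ and hence a good bound on $\abs{w-\tT^0_{t_0}w}_{0;\,Q_r}$. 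The remaining piece $v=u-\tu^{\kappa r}-w$ solves an inhomogeneous equation with zero parabolic boundary data and is controlled in $L_\infty$ by De Giorgi--Nash--Moser. The estimate then follows from the nested-cylinder iteration as in \eqref{eq2.48}--\eqref{eq2.54}, and a parabolic Poincar\'e inequality together with Sobolev embedding produces the $\norm{\vec f}_{L_1(Q_1)}$ term via a bound on $\abs{u}_{0;\,Q_1}$. This also makes clear why $\hd<\delta_0$ is unnecessary here: no H\"older exponent from Krylov--Safonov or De Giorgi--Nash--Moser enters the argument; only $L_\infty$ bounds do.
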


\begin{theorem}
                                    \label{thm5}
Let $\delta\in (0,1]$ and $p\in (d+2,\infty)$ be such that $\hd:=\delta-(d+2)/p>0$.
Assume that $a=[a^{ij}]$ are uniformly continuous in $(t,x^1,\ldots,x^{d-1})$ and merely measurable in $x^d$.
Let $\omega_{a}$ denote a modulus of continuity of $a=[a^{ij}]$ with respect to $(t,x^1,\ldots, x^{d-1})$.

\begin{enumerate}[(i)]
\item
Let $u\in W^{1,2}_{p}(Q_1)$ be a strong solution of the equation
\[
P u=f \quad \text{in }\,Q_1.
\]
If $a=[a^{ij}]$ are $\delta$-H\"older continuous in $x'$ and $f \in C^{\hd}_{x'}(Q_1)$,
then we have $D_{x'}u\in C^{(1+\hd)/2,1+\hd}(Q_{1/2})$ and there is a constant $N$ depending on $d$, $p$, $\delta$, $\nu$, and $\omega_a$, such that
\begin{equation}
                                            \label{eq4.11p}
[D_{x'}u]_{(1+\hd)/2,1+\hd;\,Q_{1/2}}
\le N \left([f]_{x',\hd;\,Q_1}+  \left(1+ [a]_{x', \delta;\,Q_1}\right)\norm{D^2 u}_{L_p(Q_1)} \right).
\end{equation}
If $a=[a^{ij}]$ are $\delta/2$-H\"older continuous in $t$ and $f\in C^{\hd/2}_{t}(Q_1)$, then $u_t\in C^{\hd/2,\hd}(Q_{1/2})$  and there is a constant $N$ depending on $d$, $p$, $\delta$, $\nu$, and $\omega_a$, such that
\begin{equation}
                                            \label{eq4.11pz}
[u_t]_{\hd/2,\hd;\,Q_{1/2}}\le N \left([f]_{t,\hd/2;\,Q_1}+ \norm{f}_{L_1(Q_1)}+ \left(1+ [a]_{t,\delta/2;\,Q_1}\right)\norm{D^2 u}_{L_p(Q_1)} \right).
\end{equation}

\item
Let $u\in \cH^1_p(Q_1)$ be a weak solution of the equation
\[
\cP u=\dv \vec f \quad \text{in }\,Q_1.
\]
If $a=[a^{ij}]$ are $\delta$-H\"older continuous in $x'$ and $\vec f \in  C^{\hd}_{x'}(Q_1)$, then $D_{x'}u\in C^{\hd/2,\hd}(Q_{1/2})$ and there is a constant $N$ depending on $d$, $p$, $\delta$, $\nu$, and $\omega_a$, such that
\[
[D_{x'} u]_{\hd/2,\hd;\,Q_{1/2}}\le N \left([\vec f]_{x',\hd;\,Q_1}+\left(1+ [a]_{x',\delta;\,Q_1}\right) \norm{Du}_{L_p(Q_1)} \right).
\]
\end{enumerate}
\end{theorem}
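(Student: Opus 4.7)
The plan is to repeat the Campanato-type scheme of Theorem~\ref{thm4} on parabolic cylinders $Q_\rho(z_0)$, invoking parabolic analogues of Lemmas~\ref{lem1}--\ref{lem3}. These parabolic $W^{1,2}_p$ and $\cH^1_p$ estimates for equations whose coefficients are uniformly continuous in $(t, x^1, \ldots, x^{d-1})$ and merely measurable in $x^d$ follow by the same localization--bootstrap procedure as in the elliptic case, built on top of the parabolic one-direction measurable interior $W^{1,2}_p$ theory (the parabolic counterparts of \cite{KK07, DK09, Dong12b}).

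For part (i) in the $x'$-H\"older case, I would fix $z_0 = (t_0, x_0) \in Q_{1/2}$ and $0 < r < R/8$, freeze the first $q$ coordinates by setting $\tilde a^{ij}(t, x) := a^{ij}(t, x_0', x'')$, and use a smooth cut-off to define a modification $\hat a^{ij}$ that agrees with $\tilde a^{ij}$ on $Q_{2R/3}(z_0)$ and is continuous up to $\partial_p Q_R(z_0)$. Solving the Dirichlet problem
\[
w_t - \hat a^{ij} D_{ij} w = f(t,x) - f(t, x_0', x'') + (\tilde a^{ij} - a^{ij}) D_{ij} u, \quad w|_{\partial_p Q_R(z_0)} = 0,
\]
the parabolic $W^{1,2}_p$ bound (whose norm is scale-invariant under $R < 1$ since the modulus of continuity of $\hat a$ only improves under the affine rescaling $Q_R(z_0) \to Q_1$) yields
\[
\norm{D^2 w}_{L_p(Q_R(z_0))} \le N R^{\hd + (d+2)/p} [f]_{x', \hd; Q_1} + N R^\delta [a]_{x', \delta; Q_1} \norm{D^2 u}_{L_p(Q_R(z_0))}.
\]
Then $v := u - w$ satisfies $v_t - \tilde a^{ij}(t, x_0', x'') D_{ij} v = f(t, x_0', x'')$ on $Q_{R/2}(z_0)$, with coefficients and right-hand side both independent of $x'$. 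Differentiating once in $x'$, the function $\hat v := D_{x'} v$ solves a homogeneous equation with these same coefficients, measurable in $x^d$ and uniformly continuous in $(t, x^1, \ldots, x^{d-1})$. I would then apply the parabolic interior $W^{1,2}_{\hat p}$ estimate for large $\hat p$, combined with the parabolic Sobolev embedding, to obtain a $C^{\gamma/2, \gamma}$ bound on $D\hat v$ for some $\gamma \in (\hd, 1)$. This produces an oscillation-decay inequality for $DD_{x'} u$ on parabolic cylinders; combining it with the bound on $D^2 w$ and applying Lemma~\ref{lem:giaq} yields \eqref{eq4.11p} through the parabolic Campanato characterization of $C^{(1+\hd)/2, 1+\hd}$.

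For the $t$-H\"older case in (i), I would instead freeze in time: set $\tilde a^{ij}(t, x) := a^{ij}(t_0, x)$ and run the same Dirichlet-problem construction, obtaining $v = u - w$ with $v_t - a^{ij}(t_0, x) D_{ij} v = f(t_0, x)$ on $Q_{R/2}(z_0)$, whose coefficients and right-hand side are now $t$-independent. Differentiating in $t$, the function $v_t$ solves a homogeneous parabolic equation with coefficients measurable in $x^d$, so a parabolic $C^{\gamma/2, \gamma}$ estimate for $v_t$ (obtained from the $W^{1,2}_{\hat p}$ theory and Sobolev embedding) closes the oscillation-decay argument for $u_t$ and, via Lemma~\ref{lem:giaq} and parabolic Campanato's theorem, gives \eqref{eq4.11pz}. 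The extra $\norm{f}_{L_1(Q_1)}$ term arises when controlling the mean of $v_t$, which is reconstructed by integrating the original equation $u_t = a^{ij} D_{ij} u + f$ in $t$. Part (ii) is essentially the non-divergence $x'$-H\"older argument run in $\cH^1_p$: freezing $a^{ij}$ in $x'$, using the parabolic divergence-form analogue of Lemma~\ref{lem3} to bound $Dw$, and then applying a parabolic De Giorgi--Nash--Moser $C^{\gamma/2, \gamma}$ estimate on $\hat v = D_{x'} v$ to produce the oscillation decay.

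The main obstacle is not the iteration itself but the establishment of the parabolic one-direction-measurable $W^{1,2}_p$ and $\cH^1_p$ solvability and regularity lemmas with norms scale-invariant under shrinking of the modulus of continuity of the $(t, x^1, \ldots, x^{d-1})$-directions; these are what allow the cut-off coefficient $\hat a^{ij}$ to yield optimal powers of $R$ in the estimate on $w$. A secondary subtlety is the parabolic Campanato step: passing from the $L^p$-oscillation decay of $DD_{x'} u$ to the full space-time norm of $D_{x'} u$ in $C^{(1+\hd)/2, 1+\hd}$ requires the parabolic analogue of Lemma~\ref{lemma6.14} together with an interpolation between spatial regularity and the equation to recover the $(1+\hd)/2$ time-H\"older exponent.
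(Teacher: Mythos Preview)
Your framework for \eqref{eq4.11pz} and assertion (ii) is essentially what the paper does: freeze in $t$ (resp.\ in $x'$), solve the Dirichlet problem for $w$, and run Campanato on $u_t$ (resp.\ $D_{x'}u$) using the parabolic analogues of Lemmas~\ref{lem1}--\ref{lem3}. Those parts are fine.

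The gap is in your derivation of \eqref{eq4.11p}. You write that the $C^{\gamma/2,\gamma}$ bound on $D\hat v$ ``produces an oscillation-decay inequality for $DD_{x'}u$ on parabolic cylinders''. This is exactly the step that fails, and the paper says so explicitly. In the elliptic proof of Theorem~\ref{thm4}~(i), the passage from $[D\tilde v]_{\gamma}\lesssim R^{-\gamma-d/p}\norm{\tilde v}_{L_p}$ to an inequality in terms of $\norm{D\hat v-(D\hat v)_{B_{R/4}}}_{L_p}$ uses the Poincar\'e inequality on $\tilde v$, which has zero mean. In the parabolic setting $\tilde v=\hat v-(\hat v)_{Q_{R/4}(z_0)}-(x-x_0)\cdot(D\hat v)_{Q_{R/4}(z_0)}$ depends on $t$, and there is no Poincar\'e inequality bounding $\norm{\tilde v}_{L_p(Q_{R/4})}$ by $R\norm{D_x\tilde v}_{L_p(Q_{R/4})}$ alone; any parabolic Poincar\'e requires a term involving $\tilde v_t$, which you cannot absorb. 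Consequently the iteration in the quantity $\int_{Q_r}\abs{DD_{x'}u-(DD_{x'}u)_{Q_r}}^p$ does not close.

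The paper's remedy is to change the iteration quantity: instead of the oscillation of $DD_{x'}u$, it iterates on
\[
\int_{Q_r(z_0)}\Abs{D_{x'}u-(D_{x'}u)_{Q_r(z_0)}-(x-x_0)\cdot(DD_{x'}u)_{Q_r(z_0)}}^p\,dx\,dt,
\]
i.e.\ the $L_p$ norm of the first-order Taylor remainder of $D_{x'}u$. For this quantity the parabolic Poincar\'e (\cite[Lemma~4.2.1]{Kr08}) applied to $V=\tilde v-(\tilde v)_{Q_r}-(x-x_0)\cdot(D\tilde v)_{Q_r}$ gives the missing factor of $r$, the $C^{\gamma/2,\gamma}$ bound on $D\tilde v$ from Lemma~\ref{lem2p} supplies the decay, and the right-hand side is again $\norm{\tilde v}_{L_p(Q_{R/4})}$, which is the same Taylor-remainder quantity at scale $R/4$. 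Lemma~\ref{lem:giaq} then yields decay at rate $r^{d+2+p(1+\hd)}$, and Campanato's theorem for this higher-order modulus gives $D_{x'}u\in C^{(1+\hd)/2,1+\hd}$ directly. So the subtlety you flagged as ``secondary'' (passing to the time exponent) is in fact the primary obstruction, and it is resolved not by interpolation after the fact but by building the correct Campanato functional into the iteration from the start.
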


\begin{remark}
In Theorem \ref{thm5}, we may assume that $a^{ij}$ are uniformly continuous in $x$ instead of $(t,x^1,\ldots,x^{d-1})$.
All that is needed is $W^{1,2}_p$ solvability of the equation, and as such this condition can be relaxed to the vanishing mean oscillation (VMO) condition or partially VMO condition; see, for instance, \cite{KK07} and \cite{Dong12b}.
\end{remark}

\mysection{The proofs: Parabolic estimates}		\label{sec:pp}

\subsection{Proof of Theorem~\ref{thm2}}				\label{sec5.1}
The proof is similar to that of Theorem~\ref{thm1} and only minor adjustments are needed.
We begin with proving assertion (i).
We consider the both cases (i.e., the cases when $a$ are independent of either $x'$ or $t$) simultaneously.

For a function $v$ defined on $\bR_0^{d+1}$ and $\epsilon>0$, we define a partial mollification of $v$ with respect to $x'$ as
\[
\tv^\epsilon(t,x',x''):=\int_{\bR^q}v(t,x'-\epsilon y',x'')\zeta(y')\,dy'
\]
and a partial mollification with respect to $t$ as
\[
\tv^\epsilon(t,x):=\int_{0}^\infty\big(2v(t-\epsilon^2 s,x)-v(t-2\epsilon^2 s,x)\big) \eta(s-1)\,ds,
\]
where $\eta$ and $\zeta$ are as defined in Section~\ref{sec3.1}.
The above definition enables us to obtain an analogue of Lemma~\ref{lem12.25}.
In particular, we have
\[
\abs{v(z_0) -\tv^\epsilon (z_0)} \le  N \epsilon^{2+\delta}[v]_{t, 1+\delta/2;\, Q_\epsilon(z_0)}.
\]
Notations regarding partial Taylor polynomials such as $\tT^k_{x_0'} v$ and $\tT^k_{t_0} v$ are defined in an obvious way.
Similar to the proof of Theorem~\ref{thm1}, we may assume $u \in C^{2+\delta}_{x'}(\bR^{d+1}_0)$ (resp. $u \in C^{1+\delta/2}_{t}(\bR^{d+1}_0)\,$) and let $a_n=[a^{ij}_n]$ be a sequence of coefficients that are continuous, independent of $x'$ (resp. independent of $t$), satisfy \eqref{parabolic}, and $a_n \to a$ a.e. as $n\to \infty$.
Let $P_n$ be the corresponding operator with $a_n$ in place of $a$.
Then
\[
P_n u=f_n,\quad \text{where}\quad f_n=f-(a^{ij}_n-a^{ij})D_{ij}u.
\]
Let $\kappa>2$ be a number to be chosen later.
Then, we have for any $r>0$,
\[
P_n \tu^{\kappa r}=\tf_n^{\kappa r},
\]
where $\tu^\epsilon$ is a partial mollification with respect to $x'$ (resp. with respect to $t$).
Let $Q_{r}=Q_{r}(z_0)$, where $z_0$ is a point in $\bR^{d+1}$, and let $w\,(=w_n)\in W^2_{d+1;\,loc}(Q_{\kappa r})\cap C^0(\overline Q_{\kappa r})$ be a unique solution of the problem (see \cite[Theorem 7.17]{Lieberman})
\[
\left\{
  \begin{aligned}
    P_n w = 0 \quad & \hbox{in $\;Q_{\kappa r}$,} \\
    w=u-\tu^{\kappa r} \quad & \hbox{on $\;\partial_p Q_{\kappa r}$.}
  \end{aligned}
\right.
\]
By the ABP maximum principle and an analogue of Lemma \ref{lem12.25} (ii), we obtain
\begin{equation}
                                                        \label{eq1.27p}
\sup_{Q_{\kappa r}}\, \abs{w}= \sup_{\partial_p Q_{\kappa r}} \,\abs{w}\le N(\kappa r)^{2+\delta}[u]_{x', 2+\delta; \,\bR^{d+1}_0} \quad\bigl(\text{resp. }\le N(\kappa r)^{2+\delta}[u]_{t,1+\delta/2; \,\bR^{d+1}_0}\bigr).
\end{equation}
By the Krylov--Safonov theorem, $w\in C^{\delta_0/2,\delta_0}_{loc}(Q_{\kappa r})$ for some $\delta_0=\delta_0(d,\nu)\in (0,1)$.
Since $a_n^{ij}$ are independent of $x'$ (resp. independent of $t$), for any integer $j \ge 0$, there is a constant $N=N(d,q, \nu, j)$ such that we have (cf. \eqref{eq16.41})
\begin{equation}
				\label{eq2.76a}
\abs{D^j_{x'} w}_{0; \,Q_{\kappa r/2}}\le N\,(\kappa r)^{-j}  \abs{w}_{0;\,Q_{\kappa r}}\;\; \bigl(\;\text{resp. }\,  \abs{D^j_t w}_{0;\,Q_{\kappa r/2}}\le N\,(\kappa r)^{-2j}  \abs{w}_{0;\,Q_{\kappa r}}\,\bigr)
\end{equation}
Notice that Taylor's formula yields (see \cite[Theorem 8.6.1]{Kr96})
\begin{equation}
				\label{eq2.60q}
\abs{w-\tT^2_{x_0'}w}_{0;\,Q_r}  \le Nr^3 \abs{D^3_{x'} w}_{0;\,Q_r}\;\;
\bigl(\;\text{resp. }\,\abs{w-\tT^1_{t_0}w}_{0; \,Q_r}  \le  N r^4 \abs{D^2_t w}_{0;\,Q_r}\;\bigr).
\end{equation}
Then we obtain from \eqref{eq2.60q}, \eqref{eq2.76a}, and \eqref{eq1.27p}
\begin{align*}
\abs{w-\tT^2_{x_0'}w}_{0;\,Q_r} & \le N \kappa^{-3} \abs{w}_{0;\,Q_{\kappa r}} \le N\kappa^{\delta-1} r^{2+\delta} [u]_{x',2+\delta; \,\bR^{d+1}_0}. \\
\bigl(\;\text{resp. }\; \abs{w-\tT^1_{t_0}w}_{0;\,Q_r} &\le N \kappa^{-4} \abs{w}_{0;\,Q_{\kappa r}}\le N \kappa^{\delta-2} r^{2+\delta} [u]_{t,1+\delta/2; \,\bR^{d+1}_0} \;\bigr).
\end{align*}
On the other hand, $v:=u-\tu^{\kappa r}-w$ satisfies
\[
\left\{
  \begin{aligned}
    P_n v = f_n-\tf_n^{\kappa r} \quad & \hbox{in $Q_{\kappa r}$,} \\
    v=0 \quad & \hbox{on $\partial_p Q_{\kappa r}$.}
  \end{aligned}
\right.
\]
Therefore, we have (similar to the derivation of \eqref{eq2.14})
\[
\abs{u-\tu^{\kappa r}-w}_{0;\,Q_{\kappa r}} \le N(\kappa r)^{2+\delta} [f]_{x',\delta;\,\bR^{d+1}_0}+ N E\;\;
\bigl(\;\text{resp. }\; \le N(\kappa r)^{2+\delta} [f]_{t,\delta/2; \,\bR^{d+1}_0}+ N E\;\bigr),
\]
where we set $E=\norm{(a^{ij}_n-a^{ij})D_{ij}u}_{L_{d+1}(Q_{2\kappa r})}$, which tends to zero as $n \to \infty$ by dominated convergence theorem.
Also, similar to \eqref{eq2.17}, we get
\begin{align*}
\abs{\tu^{\kappa r}-\tT^2_{x_0'}\tu^{\kappa r}}_{0; \,Q_r} &\le N \kappa^{\delta-1} r^{2+\delta} [u]_{x',2+\delta;\bR^{d+1}_0}.\\
\bigl(\;\text{resp. }\;\abs{\tu^{\kappa r}-\tT^1_{t_0}\tu^{\kappa r}}_{0; \,Q_r} &\le  N \kappa^{\delta-2} r^{2+\delta} [u]_{t, 1+\delta/2;\bR^{d+1}_0}.\;\bigr)
\end{align*}
Take $p=\tT^2_{x_0'}w+\tT^2_{x_0'} \tu^{\kappa r}$
(resp. $p=\tT^1_{t_0}w+\tT^1_{t_0} \tu^{\kappa r}$).
Then similar to \eqref{eq3.09d}, we have
\begin{align*}
\abs{u-p}_{0; \,Q_r(z_0)} &\le N\kappa^{\delta-1} r^{2+\delta}[u]_{x',2+\delta; \,\bR^{d+1}_0}+N(\kappa r)^{2+\delta} [f]_{x',\delta; \,\bR^{d+1}_0}+N E.\\
\bigl(\;\text{resp. }\;\abs{u-p}_{0; \,Q_r(z_0)} &\le N \kappa^{\delta-2} r^{2+\delta}[u]_{t,1+\delta/2;\,\bR^{d+1}_0}+N(\kappa r)^{2+\delta} [f]_{t,\delta/2; \,\bR^{d+1}_0}+NE.\;\bigr)
\end{align*}
Letting $n\to \infty$, this implies
\begin{align*}
r^{-2-\delta}\,\inf_{p\in\tilde\bP_{2}} \abs{u-p}_{0; \,Q_r(z_0)} &\le N\kappa^{\delta-1} [u]_{x',2+\delta; \,\bR^{d+1}_0}+N \kappa^{2+\delta} [f]_{x',\delta; \,\bR^{d+1}_0}\\
\bigl(\text{resp.}\quad r^{-2-\delta}\,\inf_{p\in\tilde\bP_{1}} \abs{u-p}_{0; \,Q_r(z_0)}&\le N\kappa^{\delta-2} [u]_{t,1+\delta/2; \,\bR^{d+1}_0}+N \kappa^{2+\delta} [f]_{t,\delta/2; \,\bR^{d+1}_0}\bigr)
\end{align*}
for any $z_0\in \bR^{d+1}$ and $r>0$.
We take the supremum of the above with respect to $z_0\in \bR^{d+1}$ and $r>0$, and then apply \cite[Theorem 3.3.1]{Kr96} to get
\begin{align*}
[u]_{x',2+\delta; \,\bR^{d+1}_0} &\le N\kappa^{\delta-1} [u]_{x',2+\delta; \,\bR^{d+1}_0}+N\kappa^{2+\delta} [f]_{x',\delta; \,\bR^{d+1}_0}.\\
\bigl(\;\text{resp.}\quad [u]_{t,1+\delta/2; \,\bR^{d+1}_0} &\le N\kappa^{\delta-2} [u]_{t,1+\delta/2; \,\bR^{d+1}_0}+N\kappa^{2+\delta} [f]_{t,\delta/2; \,\bR^{d+1}_0}.\;\bigr)
\end{align*}
To finish the proof of assertion (i), it suffices to choose a large $\kappa$ such that $N\kappa^{\delta-1}<1/2$.
This completes the proof of assertion (i).

Assertion (ii) then follows by combining the proof of assertion (i) with that of Theorem \ref{thm1}~(ii) (with cylinders in place of balls).
Finally, assertion (iii) is obtained similarly by a minor modification of the proof above (cf. \cite[Theorem 2.16]{DK11}).
We leave the details to the interested reader.
The theorem is proved.
\hfill\qedsymbol

\subsection{Proof of Theorem~\ref{thm5m}}
The following lemma is a parabolic version of Lemma \ref{lem4}, the proof of which can be found in \cite[Corollary 4.2]{Kr10} and \cite[Lemma~5.5]{DoKrLi}.
\begin{lemma}
                                                  \label{lem4p}
Let $w\in   W^{1,2}_{d+1}(Q_{r})$ be a function such that $w=0$ on $\partial_p Q_r$. Then there are constants $\epsilon \in (0,1]$ and $N$,
depending only on $d$ and $\nu$, such that we have
\[
\fint_{Q_{r}} \abs{D^2 w}^\epsilon \,dx \,dt   \leq N \left(\fint_{Q_{r}} \abs{P w}^{d+1} \,dx \,dt \right)^{\epsilon/(d+1)}.
\]
\end{lemma}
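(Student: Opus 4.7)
The strategy is to adapt F. Lin's argument for the elliptic $W^2_\varepsilon$ estimate (Lemma \ref{lem4}) to the parabolic setting, as done in the cited works of Krylov and Dong--Krylov--Li. First I would reduce to the case $r=1$ by the standard parabolic scaling $\tilde w(t,x) := r^{-2}w(r^2 t, rx)$, which maps $Q_1$ to $Q_r$; both $|D^2\tilde w|$ and $|P\tilde w|$ scale by the factor absorbed into the volume normalizations, so the stated inequality is scale-invariant and it suffices to prove it on $Q_1$.

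The essential analytic input is the parabolic Aleksandrov--Bakelman--Pucci (ABP) maximum principle: for $w\in W^{1,2}_{d+1}(Q_1)$ with $w=0$ on $\partial_p Q_1$,
\[
\sup_{Q_1}\abs{w}\le N(d,\nu)\,\norm{Pw}_{L_{d+1}(Q_1)}.
\]
More importantly, a refined version of this inequality, localized to the \emph{parabolic contact set} of $w$ with its (one-sided-in-time) concave envelope, yields a Monge--Amp\`ere-type control: the Lebesgue measure of the contact set together with the product $|w_t|\det(-D^2 w)$ restricted there are bounded by $\norm{(Pw)^+}_{L_{d+1}}^{d+1}$. This is precisely where the exponent $d+1$ (instead of $d$) enters, accounting for the extra time variable.

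The main step, and the main technical obstacle, is to combine this localized ABP bound with a parabolic Calder\'on--Zygmund-type stopping-time decomposition of $Q_1$ into parabolic subcylinders (cf. the Krylov--Safonov/Caffarelli--Escauriaza machinery). Applied to rescaled tangential pieces of $w$ at each level of the decomposition, it produces, iteratively, a weak-type distribution estimate: there exists $\eta=\eta(d,\nu)>0$ such that for all $\lambda>0$,
\[
\Abs{\set{(t,x)\in Q_1:\abs{D^2 w(t,x)}>\lambda}}\le N\lambda^{-\eta}\,\norm{Pw}_{L_{d+1}(Q_1)}^{\eta}.
\]
The asymmetric role of the time derivative and the one-sided nature of the parabolic envelope make this considerably more delicate than in the elliptic case and force the use of parabolic (rather than Euclidean) cylinders throughout.

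Finally, I would integrate the distribution estimate: for any $\epsilon\in(0,\eta)$,
\[
\int_{Q_1}\abs{D^2 w}^\epsilon\,dx\,dt
= \epsilon\int_0^\infty \lambda^{\epsilon-1}\Abs{\set{\abs{D^2 w}>\lambda}}\,d\lambda
\le N\,\norm{Pw}_{L_{d+1}(Q_1)}^{\epsilon},
\]
where the integral is split at $\lambda_0=\norm{Pw}_{L_{d+1}(Q_1)}$ to handle the small-$\lambda$ range by the trivial bound and the large-$\lambda$ range by the weak-type inequality. Dividing through by $|Q_1|$ and undoing the initial rescaling recovers the averaged estimate with the claimed exponent $\epsilon/(d+1)$ on the right-hand side. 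The heavy lifting is entirely in the distribution function estimate; once that is in hand, the remaining passage to the $L_\epsilon$ bound and the scaling are routine.
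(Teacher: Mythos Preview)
Your outline is correct and is precisely the approach of the references the paper cites; the paper itself does not give a proof of this lemma but simply defers to \cite[Corollary~4.2]{Kr10} and \cite[Lemma~5.5]{DoKrLi}, both of which carry out exactly the parabolic adaptation of Lin's argument that you describe (parabolic ABP on the contact set, Krylov--Safonov-type cube decomposition yielding a weak-$L_\eta$ bound for $D^2w$, then integration for $\epsilon<\eta$). One minor remark on your scaling reduction: since the coefficients $a^{ij}$ depend on $(t,x)$, the rescaled operator uses the rescaled coefficients $\tilde a^{ij}(t,x)=a^{ij}(r^2t,rx)$, which still satisfy \eqref{parabolic} with the same $\nu$, so the constants are unaffected; you should make this explicit.
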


\subsubsection{Proof of assertion (i)}
Similar to the proof of Theorem~\ref{thm4m}, we may assume that $D^2_{x'}u\in C^{\hd/2,\hd}(Q_{1/2})$ (resp. $u_t\in C^{\hd/2,\hd}(Q_{1/2})$).
Moreover, we can find a sequence of continuous coefficients $a_n=[a^{ij}_n]$, which are $\delta$-H\"older continuous in $x'$ (resp. $\delta/2$-H\"older continuous in $t$)  with $[a_n]_{x',\delta}\le [a]_{x',\delta}$ (resp. $[a_n]_{t,\delta/2}\le [a]_{t,\delta/2}$), satisfy \eqref{parabolic}, and $a_n\to a$ a.e. as $n\to \infty$.
Let $P_n$ be the corresponding operator with $a_n$ in place of $a$.
Then we have
\[
P_n u=f_n,\quad \text{where}\quad f_n=f-(a^{ij}_n-a^{ij})D_{ij}u.
\]
We take a point $z_0\in Q_{1/2}$ and $r, R\in (0,1/4)$ such that $0<r<R/4$.
By the classical $W^{1,2}_{d+1}$ solvability for parabolic equations with continuous coefficients, there is a unique solution $w\in W^{1,2}_{d+1}(Q_{R}(z_0))$ of the equation
\begin{gather*}
w_t-a_n^{ij}(t, x_0',x'')D_{ij}w=f_n-f(t, x_0',x'')-\left(a_n^{ij}(t, x_0',x'')-a_n^{ij}\right)D_{ij}u \\
\bigl(\;\text{resp.} \quad w_t-a_n^{ij}(t_0, x)D_{ij}w=f_n-f(t_0, x)-\left(a_n^{ij}(t_0, x)-a_n^{ij}\right)D_{ij}u \; \bigr)
\end{gather*}
in $Q_{R}(z_0)$ with zero Dirichlet boundary value on $\partial_p Q_{R}(z_0)$.
Thanks to Lemma \ref{lem4p}, we have (similar to the derivation of \eqref{eq11.41m})
\begin{multline*}
\int_{Q_R(z_0)} \Abs{D^2 w}^\epsilon \le N R^{d+2-\epsilon(d+2)/(d+1)} \norm{f_n-f}_{L_{d+1}(Q_R(z_0))}^\epsilon\\
+NR^{d+2+\epsilon\hd} [f]^\epsilon_{x',\hd; \,Q_1}+N R^{d+2+\epsilon(\delta-(d+2)/p)} [a]_{x', \delta; \,Q_1}^\epsilon \norm{D^2 u}_{L_p(Q_1)}^\epsilon
\end{multline*}
and, respectively,
\begin{multline*}
\int_{Q_R(z_0)} \Abs{w_t}^\epsilon \le N R^{d+2-\epsilon(d+2)/(d+1)} \norm{f_n-f}_{L_{d+1}(Q_R(z_0))}^\epsilon\\
+NR^{d+2+\epsilon\hd} [f]^\epsilon_{t,\hd/2; \,Q_1}+N R^{d+2+\epsilon(\delta-(d+2)/p)} [a]_{t, \delta/2; \,Q_1}^\epsilon \norm{D^2 u}_{L_p(Q_1)}^\epsilon,
\end{multline*}
where $N=N(d, \nu)$.
It is easily seen that $v:=u-w\in  W^{1,2}_{d+1}(Q_{R}(z_0))$ satisfies
\begin{gather}
v_t-a^{ij}_n(t,x_0',x'')D_{ij}v=f(t,x_0',x'')\quad \text{in}\quad Q_{R}(z_0).\nonumber\\
					\label{eq11.47mp}
\left(\;\text{resp.} \quad v_t-a_n^{ij}(t_0, x) D_{ij}v=f(t_0, x)\quad \text{in}\quad Q_{R}(z_0).\;\right)
\end{gather}
Without loss of generality, we may assume that $v$ is smooth with respect to $x'$ (resp. with respect to $t$).
By differentiating \eqref{eq11.47mp} with respect to $x'$ twice (resp. with respect to $t$ once), we see that $\hat v:=D_{x'}^2 v$ (resp. $\hat v:= v_t$) satisfies
\[
\hat v_t-a_n^{ij}(t, x_0',x'')D_{ij}\hat v=0 \quad
\bigl(\;\text{resp. } \; \hat v_t-a_n^{ij}(t_0, x) D_{ij}\hat v=0\;\bigr) \quad \text{in}\quad Q_{R/2}(z_0).
\]
Clearly, for any constant $c \in \bR$, the same equation is satisfied by $\tilde v:=\hat v-c$ in place of $\hat v$.
By applying the Krylov--Safonov estimate, we get
\begin{multline*}
\int_{Q_r(z_0)} \Abs{\hat v-(\hat v)_{Q_r(z_0)}}^\epsilon
=\int_{Q_r(z_0)} \Abs{\tilde v-(\tilde v)_{Q_r(z_0)}}^\epsilon
\le Nr^{d+2+\epsilon\delta_0}[\tilde v]^\epsilon_{\delta_0/2, \delta_0; \,Q_{R/4}(z_0)}\\
\le N\left(\frac{r}{R}\right)^{d+2+\epsilon\delta_0}\int_{Q_{R/2}(z_0)} \Abs{\tilde v}^\epsilon
=N\left(\frac{r}{R}\right)^{d+2+\epsilon\delta_0}\int_{Q_{R/2}(z_0)} \Abs{\hat v-c}^\epsilon,
\end{multline*}
for some $\delta_0=\delta_0(d,\nu)>0$.
We set ($D^2_{x'}u=D_{ij}u$ for $i,j=1,\ldots, q$)
\[
\phi(z_0,r):=\inf_{c \in \bR} \int_{Q(z_0,r)} \abs{D^2_{x'}u - c}^\epsilon \quad
\bigl(\;\text{resp. } \; \phi(z_0,r):=\inf_{c \in \bR} \int_{Q(z_0,r)} \abs{u_t - c}^\epsilon\;\bigr).
\]
Then, we get (similar to the derivation of \eqref{eq12.43m1})
\begin{multline}		                                        \label{eq12.43m1p}
\phi(z_0,r) \le N\left(\frac{r}{R}\right)^{d+2+\epsilon\delta_0} \phi(x_0, R)\\
+NR^{d+2+\epsilon\hd} [f]^\epsilon_{x',\hd; \,Q_1}+NR^{d+2+\epsilon(\delta-(d+2)/p)} [a]_{x', \delta; \,Q_1}^\epsilon \norm{D^2 u}_{L_p(Q_1)}^\epsilon
\end{multline}
and, respectively,
\begin{multline}
                                        \label{eq12.43m1pt}
\phi(z_0,r) \le N\left(\frac{r}{R}\right)^{d+2+\epsilon\delta_0} \phi(z_0,R) \\
+NR^{d+2+\epsilon\hd} [f]^\epsilon_{t,\hd/2; \,Q_1}+NR^{d+2+\epsilon(\delta-(d+2)/p)} [a]_{t, \delta/2; \,Q_1}^\epsilon \norm{D^2 u}_{L_p(Q_1)}^\epsilon.
\end{multline}
By Lemma~\ref{lem:giaq}, we get from \eqref{eq12.43m1p} and \eqref{eq12.43m1pt}, respectively, that for any $z_0 \in Q_{1/2}$ and any $r \in (0,1/16)$, there is a constant $c_{z_0,r}$ such that (cf. \eqref{eq2.31m})
\[
\int_{Q_r(z_0)} \Abs{D_{x'}^2 u-c_{z_0, r}}^\epsilon \le Nr^{d+2+\epsilon\hd} \left( [f]^\epsilon_{x',\hd; \,Q_1}
+\left(1+ [a]_{x', \delta;\,Q_1}^\epsilon \right) \norm{D^2 u}^\epsilon_{L_p(Q_1)}\right)
\]
and, respectively,
\[
\int_{Q_r(z_0)} \Abs{u_t-c_{z_0,r}}^\epsilon  \le Nr^{d+2+\epsilon\hd} \left( [f]^\epsilon_{t,\hd/2; \,Q_1} + \norm{f}_{L_1(Q_1)}^\epsilon +\left(1+ [a]_{t,\delta/2; \,Q_1}^\epsilon\right) \norm{D^2 u}^\epsilon_{L_p(Q_1)} \right).
\]
Here, we used
\[
\sup\set{ \phi(z_0,r) : z_0 \in Q_{1/2},\; 0<r<1/16} \le N \norm{D^2 u}_{L_1(Q_1)}^\epsilon \quad \bigl(\;\text{resp. }  \le  N \norm{u_t}_{L_1(Q_1)}^\epsilon \;\bigr)
\]
and
\[
\norm{u_t}_{L_1(Q_1)} \le N \norm{D^2 u}_{L_1(Q_1)} + \norm{f}_{L_1(Q_1)}.
\]
Therefore, we obtain \eqref{eq4.11pm} and \eqref{eq4.11pmz} from the above inequalities combined with Lemma~\ref{lemma6.14}.
This completes the proof of assertion (i).
\hfill\qedsymbol

\subsubsection{Proof of assertion (ii)}
We first consider the case when $a$ is $\delta$-H\"older continuous in $x'$ and $\vec f \in C^{\hd}_{x'}(Q_1)$.
Take a point $z_0\in Q_{1/2}$ and denote
\[
\cP_0 u = u_t - D_i\left(a^{ij}(t,x_0',x'')D_{j}u \right).
\]
Then we have
\[
\cP_0 u = \dv \vec f -D_i \left((a^{ij}(t,x_0',x'')-a^{ij})D_{j}u\right).
\]
Take $r, R\in (0,1/4)$ such that $0<r<R/4$.
Let $w\in \cH^{1}_{2}(Q_{R}(z_0))$ be the weak solution of the equation
\[
\cP_0 w =\dv \left(\vec f-\vec f(t,x_0',x'')\right) -D_i \left((a^{ij}(t,x_0',x'')-a^{ij})D_{j}u\right)
\]
in $Q_{R}(z_0)$ with the zero Dirichlet boundary condition on $\partial_p Q_R(z_0)$.
By using the energy inequality, we have, similar to \eqref{eq11.41md}, that
\begin{align*}
\int_{Q_R(z_0)}  \abs{Dw}^2 \,dx \,dt &\le N\int_{Q_R(z_0)} \Abs{\vec f-\vec f(t,x_0',x'') - \left(a^{ij}(t,x_0',x'')-a^{ij}\right) D_{j}u}^2 \,dx\,dt\\
&\le N \left(R^{d+2+2\hd} [\vec f]^2_{x',\hd; \,Q_1}+R^{d+2+2(\delta-d/p)} [a]_{x', \delta; \,Q_1}^2 \norm{Du}_{L_{p}(Q_1)}^2\right),
\end{align*}
where $N=N(d, \nu)$.
It is easily seen that $v:=u-w\in  \cH^1_2(Q_{R}(x_0))$ satisfies
\begin{equation}
                            \label{eq11.47mdp}
\cP_0 v =\dv \vec f(t,x_0',x'')\quad \text{in}\quad Q_{R}(z_0).
\end{equation}
Without loss of generality, we may assume that $v$ is smooth with respect to $x'$.
By differentiating \eqref{eq11.47mdp} with respect to $x'$, we see that $\hat v:=D_{x'} v$  satisfies
\[
\cP_0 \hat{v}=0\quad \text{in}\quad Q_{R/2}(z_0).
\]
Clearly, the above equation is still satisfied by $\tilde v:=\hat v-(\hat v)_{Q_{R}(z_0)}$ in place of $\hat v$.
Therefore, by applying the De Giorgi--Nash--Moser estimate, we get
\begin{multline*}
\int_{Q_r(z_0)}  \Abs{\hat v-(\hat v)_{Q_r(z_0)}}^2
=\int_{Q_r(z_0)}\Abs{\tilde v-(\tilde v)_{Q_r(z_0)}}^2
\le Nr^{d+2+2\delta_0} [\tilde v]^2_{\delta_0/2, \delta_0; \,Q_{R/4}(z_0)}\\
\le N\left(\frac{r}{R}\right)^{d+2+2\delta_0}\int_{Q_{R/2}(z_0)} \Abs{\tilde v}^2
= N\left(\frac{r}{R}\right)^{d+2+2\delta_0}\int_{Q_{R/2}(z_0)} \Abs{\hat v-(\hat v)_{Q_{R}(z_0)}}^2,
\end{multline*}
where  $\delta_0=\delta_0(n, \nu)>0$ is the H\"older exponent appearing in the De Giorgi--Nash--Moser estimate.
Then, we obtain (similar to the derivation of \eqref{eq12.43md})
\begin{multline}
					\label{eq12.43mdp}
\int_{Q_r(z_0)} \Abs{D_{x'}u-(D_{x'}u)_{Q_r(z_0)}}^2 \le N\left(\frac{r}{R}\right)^{d+2+2\delta_0}\int_{Q_{R}(z_0)} \Abs{D_{x'} u-(D_{x'} u)_{Q_{R}(z_0)}}^2\\
+NR^{d+2+2\hd} \left([\vec f]^2_{x',\hd; \,Q_1} + [a]_{x', \delta; \,Q_1}^2 \norm{Du}^2_{L_p(Q_1)}\right).
\end{multline}
By Lemma~\ref{lem:giaq}, we infer from \eqref{eq12.43mdp} that, for all $0<r<1/16$, we have (cf. \eqref{eq2.31md})
\begin{equation}
					\label{eq2.31mdp}
\int_{Q_r(z_0)} \Abs{D_{x'}u-(D_{x'}u)_{Q_r(z_0)}}^2
\le Nr^{d+2+2\hd}\left([\vec f]^2_{x',\hd; \,Q_1} + \left(1+ [a]_{x', \delta; \,Q_1}^2\right) \norm{Du}^2_{L_p(Q_1)}\right).
\end{equation}
Then we get \eqref{eq4.12pm} from \eqref{eq2.31mdp} by Campanato's theorem.

Next, we consider the case when $a$ is $\delta/2$-H\"older continuous in $t$ and $\vec f \in C^{\hd}_{t}(Q_1)$.
We use the idea in the proofs of Theorems \ref{thm1} and \ref{thm2}.
Let us momentarily assume that $u \in C^{(1+\hd)/2}_{t}$.
Let $\tu^\epsilon$ be a partial mollification with respect to $t$ as defined in Section~\ref{sec5.1}.

For $n=1,2,\ldots$, denote $r_n=3/4-2^{-n-1}$ and $Q^{(n)}=Q_{r_n}$. Now we fix a point $z_0\in Q^{(n)}$.
Let $\kappa>2$ be a number to be fixed later.
For any $r\le 2^{-n-3}/\kappa$, we have $Q_{2 \kappa r}(z_0)\subset Q^{(n+1)}$.
Denote
\[
\cP_0 u = u_t - D_i\left(a^{ij}(t_0,x)D_{j}u \right).
\]
Let $w\in \cH^1_{2}(Q_{\kappa r}(z_0))$ be a weak solution of the problem
\[
\left\{
  \begin{aligned}
    \cP_0 w = 0 \quad & \hbox{in $\;Q_{\kappa r}(z_0)$,} \\
    w=u-\tu^{\kappa r} \quad & \hbox{on $\;\partial_p Q_{\kappa r}(z_0)$.}
  \end{aligned}
\right.
\]
By the weak maximum principle, similar to \eqref{eq1.27p} we obtain
\begin{equation}
                                                        \label{eq1.27pz}
\sup_{Q_{\kappa r}(z_0)}\, \abs{w}= \sup_{\partial_p Q_{\kappa r}(z_0)} \,\abs{w}\le N(\kappa r)^{1+\hd}[u]_{t,(1+\hd)/2; \,Q^{(n+1)}}.
\end{equation}
Then similar to \eqref{eq2.76a} and \eqref{eq2.60q}, we have
\begin{equation}
				\label{eq2.76az}
\abs{D_t w}_{0; \,Q_{\kappa r/2}(z_0)}\le N\,(\kappa r)^{-2}  \abs{w}_{0; \,Q_{\kappa r}(z_0)},
\quad
\abs{w-\tT^0_{t_0}w}_{0; \,Q_r(z_0)}  \le  N r^2 \abs{D_t w}_{0; \,Q_r(z_0)}.
\end{equation}
We obtain from \eqref{eq2.76az} and \eqref{eq1.27pz} that
\begin{align*}
\abs{w-\tT^0_{t_0}w}_{0; \,Q_r(z_0)} &\le N \kappa^{-2} \abs{w}_{0; \,Q_{\kappa r}(z_0)}\le N \kappa^{\hd-1} r^{1+\hd} [u]_{t,(1+\hd)/2; \,Q^{(n+1)}}.
\end{align*}
On the other hand, $v:=u-\tu^{\kappa r}-w$ satisfies
\[
\left\{
  \begin{aligned}
    \cP_0 v =\dv(\vec g-\tilde{\vec g}^{\kappa r}) \quad & \hbox{in $\;Q_{\kappa r}(z_0)$,} \\
    v=0 \quad & \hbox{on $\;\partial_p Q_{\kappa r}(z_0)$,}
  \end{aligned}
\right.
\]
where
\[
\vec g=(g^1,\ldots,g^d)\quad\text{and}\quad g^i=f^i+\left(a^{ij}-a^{ij}(t_0,x)\right)D_j u.
\]
Therefore, by the De Giorgi--Nash--Moser estimate, we have
\begin{align*}
\abs{u-\tu^{\kappa r}-w}_{0; \,Q_{\kappa r}(z_0)}
&\le N(\kappa r)^{1+\hd} [\vec f]_{t,\hd/2; \,Q_1}+ N (\kappa r)^{1-(d+2)/p} \Norm{\left(a^{ij}-a^{ij}(t_0,x)\right)D_{j}u}_{L_{p}(Q_{2\kappa r}(z_0))}\\
&\le N(\kappa r)^{1+\hd} [\vec f]_{t,\hd/2; \,Q_1}+ N (\kappa r)^{1-(d+2)/p + \delta} [a]_{t, \delta/2; \,Q_1} \norm{D u}_{L_{p}(Q_1)}\\
&\le N(\kappa r)^{1+\hd} \big([\vec f]_{t,\hd/2; \,Q_1}+ [a]_{t, \delta/2; \,Q_1} \norm{Du}_{L_{p}(Q_{1})}\big).
\end{align*}
Also, similar to \eqref{eq2.17}, we get
\[
\abs{\tu^{\kappa r}-\tT^0_{t_0}\tu^{\kappa r}}_{0; \,Q_r(z_0)} \le  N \kappa^{\hd-1} r^{1+\hd} [u]_{t, (1+\hd)/2; \,Q^{(n+1)}}.
\]
Taking $p=\tT^0_{t_0}w+\tT^0_{t_0} \tu^{\kappa r}$, we have
\begin{align*}
r^{-1-\hd}\abs{u-p}_{0; \,Q_r(z_0)} &\le N \kappa^{\hd-1} [u]_{t,(1+\hd)/2; \,Q^{(n+1)}}+N\kappa^{1+\hd}\left( [\vec f]_{t,\hd/2; \,Q_1}+[a]_{t, \delta/2;\, Q_1}\norm{Du}_{L_{p}(Q_{1})}\right).
\end{align*}
On the other hand, for any $r\in ( 2^{-n-3}/\kappa,1/4)$, we have
\[
r^{-1-\hd}\,\inf_{p\in \tilde\bP_{0}} \,\abs{u-p}_{0; \,Q_r(z_0)}\le r^{-1-\hd} \abs{u}_{0; \,Q_r(z_0)}\le (2^{n+3}\kappa)^{1+\hd} \abs{u}_{0; \,Q_1},
\]
where $\tilde\bP_0$ denotes the set of zeroth-order partial polynomials in $t$.
Combining the two inequality above, we get similar to \eqref{eq2.48} that
\begin{multline*}
[u]_{t,(1+\hd)/2; \,Q^{(n)}} \le N\kappa^{\hd-1} [u]_{t,(1+\hd)/2; \,Q^{(n+1)}}\\
+N\kappa^{1+\hd} \left( [\vec f]_{t,\hd/2; \,Q_1}+[a]_{t, \delta/2; \,Q_1} \norm{Du}_{L_{p}(Q_{1})}\right)+N(2^{n+3} \kappa)^{1+\hd}\abs{u}_{0; \,Q_1}.
\end{multline*}
By choosing $\kappa$ sufficiently large and following the proof of \eqref{eq2.54}, we obtain
\[
[u]_{t,(1+\hd)/2; \,Q_{1/2}} \le N\big( [\vec f]_{t,\hd/2;\,Q_1}+[a]_{t, \delta/2; \,Q_1}\norm{Du}_{L_{p}(Q_{1})}+\abs{u}_{0;\,Q_1}\big).
\]
To estimate the last term on the right-hand side above, we note that by subtracting a constant we may assume that $\int_{Q_1}u=0$.
It then follows from a variant of the parabolic Poincar\'e inequality (cf. \cite[Lemma 4.2.1]{Kr08}) that
\[
\norm{u}_{L_{p}(Q_{1})}\le N \norm{\vec f}_{L_{1}(Q_{1})}+N\norm{Du}_{L_{p}(Q_{1})}.
\]
Using the equation, we further get
\[
\norm{u}_{\cH^1_{p}(Q_{1})}\le N \norm{\vec f}_{L_{1}(Q_{1})}+N\norm{Du}_{L_{p}(Q_{1})},
\]
which together with the parabolic Sobolev embedding theorem yields
\[
\abs{u}_{0;\,Q_1}\le N \left( \norm{\vec f}_{L_{1}(Q_{1})}+\norm{Du}_{L_{p}(Q_{1})}\right).
\]

To remove the smoothness condition that $u \in C^{(1+\hd)/2}_{t}$, we apply the mollification argument (with respect to $t$) as in the proof of Theorem 2.6 (i).
\hfill\qedsymbol

\subsection{Proof of Theorem~\ref{thm5}}
For the proof of the theorem, we need the following three lemmas, which are parabolic counterparts of Lemmas \ref{lem1} -- \ref{lem3}.
\begin{lemma}
                            \label{lem1p}
Let $p\in (1,\infty)$ be a constant.
Assume that $a=[a^{ij}]$ are continuous with respect to $(t,x^1,\ldots,x^{d-1})$ in $\bar Q_{2/3}$ with a modulus of continuity $\omega_a$ and continuous in $\bar Q_{1}\setminus Q_{2/3}$ with a modulus of continuity $\tilde \omega_a$.
Then for any $f\in L_p(Q_1)$, there is a unique strong solution $u\in W^{1,2}_p(Q_1)$ to the equation $u_t-a^{ij}D_{ij}u=f$ in $Q_1$ with the Dirichlet boundary condition $u=0$ on $\partial_p Q_1$.
Moreover, we have
\[
\norm{u}_{W^{1,2}_p(Q_1)}\le N \norm{f}_{L_p(Q_1)},
\]
where $N$ depends only on $d$, $p$, $\nu$, $\omega_a$, and $\tilde \omega_a$.
\end{lemma}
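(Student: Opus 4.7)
The plan is to mirror the argument for Lemma~\ref{lem1}, now in the parabolic setting, following the template of \cite[Ch.~11]{Kr08}. Two ingredients are needed: (a) an a priori interior $W^{1,2}_p$ estimate for parabolic equations whose coefficients are uniformly continuous in $(t,x^1,\ldots,x^{d-1})$ and merely measurable in $x^d$, i.e.\ the parabolic counterpart of the elliptic results in \cite{KK07,Dong12b}; and (b) the classical boundary $W^{1,2}_p$ Calder\'on--Zygmund estimate for parabolic equations with coefficients continuous in $(t,x)$ up to $\partial_p Q_1$.

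First I would establish the a priori bound $\norm{u}_{W^{1,2}_p(Q_1)}\le N\norm{f}_{L_p(Q_1)}$. Fix a cutoff $\zeta\in C^\infty_c(Q_{5/6})$ with $\zeta\equiv 1$ on $\bar Q_{2/3}$ and write $u=\zeta u+(1-\zeta)u$. A direct computation shows that $\zeta u$ satisfies, on $Q_1$ with zero parabolic boundary data,
\[
(\zeta u)_t-a^{ij}D_{ij}(\zeta u)=\zeta f+(\zeta_t-a^{ij}D_{ij}\zeta)u-2a^{ij}D_i\zeta\,D_j u.
\]
Since $\zeta u$ is supported in $Q_{5/6}$, where the coefficient regularity required by (a) holds, input (a) combined with a standard covering argument yields
\[
\norm{\zeta u}_{W^{1,2}_p(Q_1)}\le N\bigl(\norm{f}_{L_p(Q_1)}+\norm{u}_{W^1_p(Q_1)}\bigr).
\]
The piece $(1-\zeta)u$ is supported in $\bar Q_1\setminus Q_{2/3}$, where the coefficients are continuous in all variables with modulus $\tilde\omega_a$; input (b) handles this piece with a bound of the same form. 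Adding the two contributions, then using the standard interpolation $\norm{u}_{W^1_p(Q_1)}\le\varepsilon\norm{u}_{W^{1,2}_p(Q_1)}+N_\varepsilon\norm{u}_{L_p(Q_1)}$ and the parabolic maximum principle (or a basic energy estimate) to bound $\norm{u}_{L_p(Q_1)}$ by $\norm{f}_{L_p(Q_1)}$, one absorbs the lower-order terms and arrives at the desired a priori estimate.

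Existence and uniqueness then follow by the method of continuity along $a_\lambda^{ij}:=(1-\lambda)\delta^{ij}+\lambda a^{ij}$, $\lambda\in[0,1]$. Each $a_\lambda$ satisfies the same hypotheses with uniform moduli, so the a priori bound is uniform in $\lambda$; at $\lambda=0$ the Dirichlet problem for $\partial_t-\Delta$ on $Q_1$ is classically solvable in $W^{1,2}_p$; hence the continuity method propagates solvability to $\lambda=1$. Uniqueness is immediate from the a priori bound applied to the difference of two solutions with the same data.

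The main obstacle is input (a): one needs a parabolic interior $W^{1,2}_p$ theory for operators with coefficients merely measurable in one spatial direction and uniformly continuous in the remaining variables. Such results have been developed in the parabolic literature parallel to the elliptic work of \cite{KK07,Dong12b} and are among the tools the authors rely on elsewhere in this paper; granted this input, the rest is a routine localization plus continuity-method argument, identical in spirit to the proof of Lemma~\ref{lem1}.
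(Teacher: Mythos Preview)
Your proposal is correct and follows essentially the same approach as the paper: the paper's proof simply says the lemma is a consequence of the a priori interior $W^{1,2}_p$ estimates from \cite{KK07b, Dong12b} (the parabolic analogues of \cite{KK07, Dong12b} you allude to) combined with the classical boundary $W^{1,2}_p$ estimate for continuous coefficients, assembled via the machinery in \cite[Ch.~11]{Kr08}. One small caution: bounding $\norm{u}_{L_p(Q_1)}$ by $\norm{f}_{L_p(Q_1)}$ via a maximum principle only works directly when $p\ge d+1$, so for general $p$ you should instead follow the standard device in \cite[Ch.~11]{Kr08} (e.g., introduce a large zeroth-order term $-\lambda u$ to gain coercivity, or use Agmon's trick) rather than a direct maximum-principle bound.
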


\begin{proof}
Similar to Lemma \ref{lem1}, the lemma is a consequence of the a priori interior $W^{1,2}_p$ estimates proved in \cite{KK07b, Dong12b} for parabolic equations with coefficients measurable in one spatial direction together with the classical boundary $W^{1,2}_p$ estimates for parabolic equations with continuous coefficients.
\end{proof}

The following lemma is the parabolic analogy of Lemma \ref{lem2}, which is also a consequence of the main results in \cite{Dong12b}.

\begin{lemma}
                                \label{lem2p}
Assume that $a=[a^{ij}]$ are continuous with respect to $(t,x^1,\ldots,x^{d-1})$ with a modulus of continuity $\omega_a$.
\begin{enumerate}[(i)]
\item
Let $p\in (1,\infty)$.
Assume that  $u\in W^{1,2}_p(Q_1)$ and satisfies $u_t-a^{ij}D_{ij}u=f$ in $Q_1$, where $f\in L_p(Q_1)$.
Then we have
\[
\norm{u}_{W^{1,2}_p(Q_{1/2})}\le N \left(\norm{f}_{L_p(Q_1)}+\norm{u}_{L_p(Q_1)}\right),
\]
where $N$ depends only on $d$,  $\nu$, $p$, and $\omega_a$.

\item
If in addition $f\in L_{\hp}(Q_1)$ for some $\hp \in (p,\infty)$, then we have $u\in W^{1,2}_{\hp}(Q_{1/2})$ and
\[
\norm{u}_{W^{1,2}_{\hp} (Q_{1/2})}\le N \left( \norm{f}_{L_{\hp}(Q_1)}+\norm{u}_{L_p(Q_1)} \right),
\]
where $N$ depends only on $d$, $\nu$, $p$, and $\omega_a$.
In particular, if $\hp>(d+2)/2$, it holds that
\[
[u]_{\gamma/2,\gamma;\, Q_{1/2}}\le N \left( \norm{f}_{L_{\hp}(Q_1)}+\norm{u}_{L_p(Q_1)} \right),
\]
where $\gamma=2-(d+2)/\hp$. If $\hp>d+2$, it holds that
\[
[Du]_{\gamma/2,\gamma; \,Q_{1/2}}\le N \left( \norm{f}_{L_{\hp}(Q_1)}+\norm{u}_{L_p(Q_1)} \right),
\]
where $\gamma=1-(d+2)/\hp$.
\end{enumerate}
\end{lemma}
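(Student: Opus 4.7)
The plan is to follow the proof of Lemma \ref{lem2} line by line, substituting parabolic tools for the elliptic ones.

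For part (i), my plan is to invoke the interior $W^{1,2}_p$ estimate for parabolic equations with coefficients measurable in one spatial direction and continuous in the remaining variables, established in \cite{KK07b} and \cite{Dong12b}, and then convert an a priori bound into a local one via a standard cutoff argument. For $1/2 \le s < t \le 1$, take $\eta$ to be a smooth cutoff equal to $1$ on $Q_s$ and vanishing off $Q_t$, with $|\partial_t \eta| \lesssim (t-s)^{-2}$, $|D\eta| \lesssim (t-s)^{-1}$, and $|D^2 \eta| \lesssim (t-s)^{-2}$. Applying the known estimate to $v:=\eta u$, which solves
\[
v_t - a^{ij}D_{ij}v = \eta f + (\partial_t \eta)\, u - 2 a^{ij}D_i\eta\, D_j u - u\, a^{ij}D_{ij}\eta \quad \text{in }Q_t,
\]
with zero lateral boundary value, produces
\[
\norm{u}_{W^{1,2}_p(Q_s)} \le N(t-s)^{-2}\norm{u}_{L_p(Q_t)} + N(t-s)^{-1}\norm{Du}_{L_p(Q_t)} + N\norm{f}_{L_p(Q_t)}.
\]
Interpolating $\|Du\|_{L_p}\le \epsilon \|D^2 u\|_{L_p} + C_\epsilon \|u\|_{L_p}$ and invoking a Giaquinta-type iteration lemma on the family of concentric cylinders $\{Q_s\}$ absorbs the $W^{1,2}_p$-term from the right-hand side and yields the claimed bound on $Q_{1/2}$.

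For part (ii), my plan is to bootstrap using the parabolic Sobolev embedding. If $p<(d+2)/2$, then $W^{1,2}_p(Q_r)\hookrightarrow L_{p^\ast}(Q_r)$ with $p^\ast=(d+2)p/(d+2-p)>p$; if $p\ge(d+2)/2$, the embedding already puts $u$ in $L_\infty$ locally. Applying part (i) on a slightly smaller cylinder with exponent $p_1:=\min(p^\ast,\hat p)$ (which is permissible since $f\in L_{\hat p}\subset L_{p_1}$) upgrades $u$ to $W^{1,2}_{p_1}$ on that cylinder. Since $p^\ast/p$ is a constant factor strictly greater than $1$, finitely many iterations on a decreasing sequence $Q_1\supset Q_{r_1}\supset\cdots\supset Q_{1/2}$ reach exponent $\hat p$. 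The H\"older estimates are then immediate from the parabolic Morrey-Sobolev embedding: $W^{1,2}_{\hat p}\hookrightarrow C^{\gamma/2,\gamma}$ with $\gamma=2-(d+2)/\hat p$ when $\hat p>(d+2)/2$, and $W^{1,2}_{\hat p}\hookrightarrow C^{(1+\gamma)/2,1+\gamma}$ (so that $Du\in C^{\gamma/2,\gamma}$) with $\gamma=1-(d+2)/\hat p$ when $\hat p>d+2$.

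The main technical obstacle will be the cutoff step in part (i): because the extra term $2 a^{ij}D_i\eta\, D_j u$ produced by the Leibniz rule comes with a $(t-s)^{-1}$ factor and has coefficients depending on $a^{ij}$, one cannot simply move it to the right-hand side at a single scale. This is the only place where the proof differs in spirit from the elliptic case, and it is handled through the interpolation-plus-iteration on concentric cylinders indicated above. Once part (i) is in hand, parts about bootstrapping and H\"older regularity are entirely routine.
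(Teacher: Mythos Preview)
Your proposal is correct and follows essentially the same approach as the paper, which simply states that the lemma is ``the parabolic analogy of Lemma~\ref{lem2}'' and ``a consequence of the main results in \cite{Dong12b}'' (the proof of Lemma~\ref{lem2} in turn says only that (i) follows from \cite{Dong12b} ``by a standard localization argument'' and (ii) from (i) ``the Sobolev embedding theorem, and a bootstrap argument''). Your write-up supplies exactly those details---the cutoff-plus-interpolation-plus-iteration localization for (i) and the Sobolev bootstrap for (ii)---so there is nothing to add.
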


The next lemma is the parabolic analogy of Lemma \ref{lem3}, which follows from the main results in \cite{DK09}.

\begin{lemma}
                                \label{lem3p}
Assume that $a=[a^{ij}]$ are continuous with respect to $(t,x^1,\ldots,x^{d-1})$ with a modulus of continuity $\omega_a$.
\begin{enumerate}[(i)]
\item
Let $p\in (1,\infty)$.
Assume that $u\in \cH^1_p(Q_1)$ and satisfies
$u_t-D_i(a^{ij}D_{j}u)=\dv \vec f$ in $Q_1$, where $\vec f= (f^1,\ldots,f^d) \in L_{p}(Q_1)$.
Then we have
\[
\norm{u}_{\cH^1_p(Q_{1/2})}\le N \left( \norm{\vec f}_{L_p(Q_1)}+\norm{u}_{L_p(Q_1)} \right),
\]
where $N$ depends only on $d$, $\nu$, $p$, and $\omega_a$.

\item
If in addition $\vec f\in L_{\hp}(Q_1)$ for some ${\hp}\in (p,\infty)$, then we have $u\in \cH^1_{\hp}(Q_{1/2})$ and
\[
\norm{u}_{\cH^1_{\hp}(Q_{1/2})}\le N \left( \norm{\vec f}_{L_{\hp}(Q_1)}+\norm{u}_{L_p(Q_1)}\right),
\]
where $N$ depends only on $d$, $\nu$, $p$, and $\omega_a$.
In particular, if ${\hp}>d+2$, it holds that
\[
[u]_{\gamma/2,\gamma;\, Q_{1/2}}\le N \left( \norm{\vec f}_{L_{\hp}(Q_1)}+\norm{u}_{L_p(Q_1)}\right),
\]
where $\gamma=1-(d+2)/{\hp}$.
\end{enumerate}
\end{lemma}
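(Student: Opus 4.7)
The plan is to follow the proof of Lemma~\ref{lem3} verbatim, substituting the parabolic interior $\cH^{1}_{p}$ estimate of \cite[Theorem~2.2]{DK09} (for divergence form parabolic equations whose leading coefficients are merely measurable in one spatial direction and continuous in the others) for its elliptic counterpart. The only genuinely new ingredient is careful bookkeeping of the $\bH^{-1}_{p}$ norm under multiplication by a cutoff.

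For assertion (i), I would proceed by a standard cutoff-and-iteration argument. Given $1/2\le t<s\le 1$, let $\eta$ be a smooth cutoff supported in $Q_{s}$, identically one on $Q_{t}$, with $\abs{\eta_{t}}\le N(s-t)^{-2}$ and $\abs{D\eta}\le N(s-t)^{-1}$. A direct computation shows that $v:=\eta u\in\cH^{1}_{p}(\bR^{d+1}_{0})$ satisfies
\begin{equation*}
v_{t}-D_{i}(a^{ij}D_{j}v)=\dv\vec{F}+G\quad\text{in }\bR^{d+1}_{0},
\end{equation*}
with $\vec{F}=\eta\vec{f}-(a^{ij}u\,D_{j}\eta)_{i}$ and $G=\eta_{t}u-\vec{f}\cdot D\eta-a^{ij}D_{j}u\,D_{i}\eta$. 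Applying \cite[Theorem~2.2]{DK09} to $v$ (after rescaling if necessary so that the coefficients satisfy the hypothesis globally) delivers
\begin{equation*}
\norm{u}_{\cH^{1}_{p}(Q_{t})}\le\norm{v}_{\cH^{1}_{p}}\le N\bigl(\norm{\vec{F}}_{L_{p}}+\norm{G}_{\bH^{-1}_{p}}\bigr).
\end{equation*}
The bound on $\norm{\vec{F}}_{L_{p}}$ is immediate, while $\norm{G}_{\bH^{-1}_{p}}$ is estimated by exhibiting a decomposition $G=\dv\vec{h}+g$ with $g=\eta_{t}u$ and $\vec{h}$ chosen so that the $L_{p}$ terms $\vec{f}\cdot D\eta$ and $a^{ij}D_{j}u\,D_{i}\eta$ (which a priori are only in $L_{p}$) are absorbed into $\vec{h}$; since any $L_{p}$ function lies trivially in $\bH^{-1}_{p}$ with comparable norm, this is routine. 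A standard interpolation to split the intermediate $\norm{Du}_{L_{p}(Q_{s})}$ between $\norm{u}_{L_{p}(Q_{s})}$ and $\norm{Du}_{L_{p}(Q_{t})}$ then produces
\begin{equation*}
\norm{u}_{\cH^{1}_{p}(Q_{t})}\le\tfrac{1}{2}\norm{u}_{\cH^{1}_{p}(Q_{s})}+\frac{N}{(s-t)^{2}}\bigl(\norm{\vec{f}}_{L_{p}(Q_{1})}+\norm{u}_{L_{p}(Q_{1})}\bigr),
\end{equation*}
and \cite[Lemma~3.1, p.~161]{Giaq83} closes the iteration and yields the estimate on $Q_{1/2}$.

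For assertion (ii), I would bootstrap, exactly as in the proof of Lemma~\ref{lem3}(ii). Starting from $u\in\cH^{1}_{p}(Q_{1})$, the parabolic Sobolev embedding $\cH^{1}_{q}(Q_{r})\hookrightarrow L_{q^{*}}(Q_{r})$ with $1/q^{*}=1/q-1/(d+2)$ (or $\hookrightarrow L_{\infty}$ once $q>d+2$) upgrades the integrability of $u$. Reapplying part (i) on slightly shrunk cylinders with this improved integrability of $u$ and the hypothesis $\vec{f}\in L_{\hp}$ gives $u\in\cH^{1}_{q_{1}}$ for some $q_{1}>q$. Finitely many such steps place $u$ in $\cH^{1}_{\hp}(Q_{1/2})$. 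When $\hp>d+2$, the parabolic Sobolev--Morrey embedding $\cH^{1}_{\hp}(Q_{1/2})\hookrightarrow C^{\gamma/2,\gamma}(Q_{1/2})$ with $\gamma=1-(d+2)/\hp$ then produces the claimed H\"older estimate.

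The main obstacle will be the $\bH^{-1}_{p}$ bookkeeping in the first step, since multiplication by a cutoff does not commute cleanly with the negative-order Sobolev norm and one has to decompose the right-hand side of the cutoff equation into a genuine divergence part and an $L_{p}$ part in such a way that neither part reintroduces $u_{t}$. Once this is handled, the rest of the proof is formally identical to the elliptic argument of Lemma~\ref{lem3}.
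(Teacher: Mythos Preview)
Your proposal is correct and follows the same route the paper intends: the paper's own proof is simply the one-line remark that Lemma~\ref{lem3p} ``follows from the main results in \cite{DK09}'' in analogy with Lemma~\ref{lem3}, and you are spelling out exactly that---the cutoff/localization step invoking \cite[Theorem~2.2]{DK09} for (i), then parabolic Sobolev embedding plus bootstrap for (ii). One small caveat: the ``interpolation'' you describe to dispose of the term $N(s-t)^{-1}\norm{Du}_{L_p(Q_s)}$ is not quite an interpolation; the clean way to absorb it is to apply the $\lambda$-version of the estimate in \cite{DK09} to $v=\eta u$ with $\lambda\sim (s-t)^{-2}$ (moving $\lambda v$ to the right-hand side as an additional $L_p$ term), after which Giaquinta's iteration closes exactly as you state.
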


Now we are ready to prove the theorem.
Estimate \eqref{eq4.11pz} is proved by modifying the proof of \eqref{eq4.11pmz} using Lemmas \ref{lem1p} and \ref{lem2p} instead of Lemma \ref{lem4p} and the Krylov--Safonov estimate.
Similarly, assertion (ii) can be proved by modifying the proof of \eqref{eq4.12pm} using Lemma \ref{lem3p} instead of the De Giorgi--Nash--Moser estimate.

The proof of \eqref{eq4.11p} is slightly more involved.
We are not able to directly estimate $DD_{x'} u-(DD_{x'} u)_{Q_r(z_0)}$ as in the proof of Theorem \ref{thm4} (i) due to lack of a parabolic analogue of the Poincar\'e inequality, which is needed to derive a counterpart of \eqref{eq12.31}.
Our idea is to instead estimate ($D_{x'}u=D_i u$ for $i=1,\ldots, q$)
\[
D_{x'} u-(D_{x'} u)_{Q_r(z_0)}-(x-x_0)\cdot (DD_{x'} u)_{Q_r(z_0)}.
\]
We take a point $z_0\in Q_{1/2}$ and $r,R\in (0,1/4)$ such that $0<r<R/8$.
Let $\hat\zeta \in C^\infty(\bar Q_1)$ be a smooth cut-off function such that $0\le \hat\zeta\le 1$ in $Q_1$, $\hat\zeta=1$ in $Q_{1/2}$, and $\hat\zeta=0$ in $Q_{1}\setminus Q_{2/3}$.
Define
\[
\hat a^{ij}(t,x)=\hat\zeta ((t-t_0)/R^2,(x-x_0)/R )\,a^{ij}(t,x_0',x'') +\left(1-\hat\zeta((t-t_0)/R^2,(x-x_0)/R)\right)\delta_{ij}.
\]
Observe that $\hat a^{ij}$ are continuous with respect to $(t,x^1,\ldots,x^{d-1})$ in $\bar Q_{2R/3}(z_0)$ and continuous in $\bar Q_{R}(z_0)\setminus Q_{2R/3}(z_0)$.
By Lemma~\ref{lem1p}, there is a unique solution $w\in W^{1,2}_p(Q_{R}(z_0))$ of the equation
\[
w_t-\hat a^{ij}D_{ij}w=f(t,x)-f(t,x_0',x'')- (a^{ij}(t,x_0',x'')-a^{ij}(t,x))D_{ij}u
\]
in $Q_{R}(z_0)$ with zero Dirichlet boundary value on $\partial_pQ_{R}(z_0)$.
As in the proof of Theorem \ref{thm4} (i), we have
\begin{align}
\norm{w_t}_{L_p(Q_R(z_0))}&+\norm{D^2 w}_{L_p(Q_R(z_0))}\nonumber\\
&\le N \norm{f(t,x)-f(t,x_0',x'')
-(a^{ij}(t,x_0',x'')-a^{ij}(t,x))D_{ij}u}_{L_p(Q_R(z_0))}\nonumber\\
                            \label{eq11.41p}
&\le NR^{\hd+(d+2)/p}[f]_{x',\hd; \,Q_1}
+NR^{\delta}  [a]_{x', \delta; \,Q_1} \norm{D^2 u}_{L_p(Q_R(z_0))}
\end{align}
with a constant $N=N(d, p, \nu, \omega_a)$ that is independent of $R \in (0, 1/4)$.
By \eqref{eq11.41p} and \cite[Lemma 4.2.2]{Kr08}, we obtain
\begin{align}
\norm{Dw-(Dw)_{Q_R(z_0)}&-(x-x_0)\cdot(D^2 w)_{Q_R(z_0)}}_{L_p(Q_R(z_0))}\nonumber\\
&\le \norm{Dw-(Dw)_{Q_R(z_0)}}_{L_p(Q_R(z_0))}+\norm{(x-x_0)\cdot(D^2 w)_{Q_R(z_0)}}_{L_p(Q_R(z_0))}\nonumber\\
&\le NR\norm{w_t}_{L_p(Q_R(z_0))}+NR\norm{D^2 w}_{L_p(Q_R(z_0))}\nonumber\\
                            \label{eq3.43}
&\le NR^{1+\hd+(d+2)/p}[f]_{x',\hd; \,Q_1}
+NR^{1+\delta}  [a]_{x', \delta; \,Q_1} \norm{D^2 u}_{L_p(Q_R(z_0))}
\end{align}
It is easily seen that $v:=u-w\in  W^{1,2}_p(Q_{R}(z_0))$ satisfies
\begin{equation}
                            \label{eq11.47p}
v_t-a^{ij}(t,x_0',x'')D_{ij}v=f(t,x_0',x'')\quad \text{in}\quad Q_{R/2}(z_0).
\end{equation}
Note that both $a^{ij}(t,x_0',x'')$ and $f(t,x_0',x'')$ are independent of $x'$.
By mollification with respect to $x'$, without loss of generality, we may assume that
$\hat v:=D_{x'}v\in W^{1,2}_{p}(Q_{R/4}(z_0))$.
By differentiating \eqref{eq11.47p} with respect to $x'$, we see that $\hat v$ satisfies
\[
\hat v_t-a^{ij}(t,x_0',x'')D_{ij}\hat v=0\quad \text{in}\quad Q_{R/4}(z_0).
\]
Clearly, the equation above still holds with
\[
\tilde v:=\hat v-(\hat v)_{Q_{R/4}(z_0)}-(x^i-x_0^i)(D_i \hat v)_{Q_{R/4}(z_0)}
\]
in place of $\hat v$.
Also, note that
\[
V:=\hat v-(\hat v)_{Q_r(z_0)}-(x^i-x_0^i)(D_i \hat v)_{Q_{r}(z_0)}
=\tilde v-(\tilde v)_{Q_r(z_0)}-(x^i-x_0^i)(D_i \tilde v)_{Q_{r}(z_0)}
\]
satisfies the same equation and we have
\[
(V)_{Q_r(z_0)}=0,\quad DV= D\tilde v - (D \tilde v)_{Q_{r}(z_0)}.
\]
Therefore, by \cite[Lemma 4.2.1]{Kr08} applied to $V$, we have
\[
I:=\int_{Q_r(z_0)}   \Abs{\hat v-(\hat v)_{Q_r(z_0)}-(x^i-x_0^i)(D_i \hat v)_{Q_{r}(z_0)}}^p \,dxdt \le
 Nr^{p}\int_{Q_{r}(z_0)} \abs{D \tilde v-(D \tilde v)_{Q_{r}(z_0)}}^p \,dxdt.
\]
Take any $\gamma\in (\hd,1)$.
By applying Lemma \ref{lem2p} (ii) with a scaling (the modulus of continuity only improves!) to $\tilde v$, we derive from the above inequality that
\begin{align}
                                \label{eq12.31p}
I & \le Nr^{d+2+p(1+\gamma)} [D \tilde v]_{\gamma/2, \gamma; \,Q_{R/8}(z_0)}^p
\le N\left(\frac{r}{R}\right)^{d+2+p(1+\gamma)}\int_{Q_{R/4}(z_0)} \Abs{\tilde v}^p \,dxdt\nonumber\\
&= N\left(\frac{r}{R}\right)^{d+2+p(1+\gamma)}\int_{Q_{R/4}(z_0)} \Abs{\hat v-(\hat v)_{Q_{R/4}(z_0)}-(x^i-x_0^i)(D_i \hat v)_{Q_{R/4}(z_0)}}^p \,dxdt.
\end{align}
By \eqref{eq3.43}, \eqref{eq12.31p}, and the triangle inequality, we reach
\begin{align}
                                        \label{eq12.43p}
\int_{Q_r(z_0)} &\Abs{D_{x'}u-(D_{x'}u)_{Q_r(z_0)}-(x^i-x_0^i)(D_i D_{x'} u)_{Q_{r}(z_0)}}^p \nonumber\\
&\le N\left(\frac{r}{R}\right)^{d+2+p(1+\gamma)}\int_{Q_{R/4}(z_0)}\Abs{D_{x'} u-(D_{x'} u)_{Q_{R/4}(z_0)}-(x^i-x_0^i)(D_i D_{x'}u)_{Q_{R/4}(z_0)}}^p \nonumber\\
&\qquad+NR^{d+2+p(1+\hd)}\left( [f]^p_{x',\hd; \,Q_1} + [a]_{x', \delta; \,Q_1}^p   \norm{D^2u}^p_{L_p(Q_1)}\right).
\end{align}
By Lemma~\ref{lem:giaq}, we infer from \eqref{eq12.43p} that, for all $0<r<1/32$ we have
\begin{align*}
\int_{Q_r(z_0)}& \Abs{D_{x'}u-(D_{x'}u)_{Q_r(z_0)}-(x^i-x_0^i)(D_i D_{x'} u)_{Q_{r}(z_0)}}^p \nonumber\\
&\le Nr^{d+2+p(1+\hd)} \int_{Q_{1/16}(z_0)}\Abs{D_{x'}u-(D_{x'}u)_{Q_r(z_0)}-(x^i-x_0^i)(D_i D_{x'} u)_{Q_{1/16}(z_0)}}^p\\
&\qquad + Nr^{d+2+p(1+\hd)}\left([f]^p_{x',\hd; \,Q_1} + [a]_{x', \delta; \,Q_1}^p \norm{D^2u}^p_{L_p(Q_1)}\right)\nonumber\\
&\le Nr^{d+2+p(1+\hd)}\left([f]^p_{x',\hd; \,Q_1}
+ \left(1+[a]_{x', \delta; \,Q_1}^p\right) \norm{D^2u}^p_{L_p(Q_1)}\right).
\end{align*}
Therefore, we obtain \eqref{eq4.11p} by Campanato's theorem.
The theorem is proved.
\hfill\qedsymbol

\mysection{Appendix}

The proof of Corollary \ref{cor4.5} uses the following special type of interpolation inequalities for parabolic H\"older semi-norms.
For the sake of completeness and future references, we give a sketched proof.
Recall that $\bR^{d+1}_0:=(-\infty, 0) \times \bR^d$.
\begin{lemma}
                        \label{lem6.1}
Let $R\in (0,\infty)$, $\delta\in (0,1)$, and $u\in L_\infty(Q_R)$. Then we have
\begin{gather}
                        \label{eq12.16}
[u]_{z',(1+\delta)/2,1+\delta; \,Q_{R/2}} \le N\left([u]_{x',1+\delta; \,Q_R}+
[u]_{t,(1+\delta)/2; \,Q_R}\right),\\
                        \label{eq12.17}
[u]_{z',1+\delta/2,2+\delta; \,Q_{R/2}} \le N\left([u]_{x',2+\delta; \,Q_R}+
[u]_{t,1+\delta/2; \,Q_R}+R^{-2-\delta} \abs{u}_{0;\,Q_R}\right),
\end{gather}
where $N=N(d,\delta)$.
If in addition we assume $u\in L_\infty(\bR^{d+1}_0)$, then
\begin{equation}
                    \label{eq9.18}
[u]_{z',1+\delta/2,2+\delta; \,\bR^{d+1}_0}\le N\left([u]_{x',2+\delta; \,\bR^{d+1}_0}+
[u]_{t,1+\delta/2; \,\bR^{d+1}_0}\right).
\end{equation}
\end{lemma}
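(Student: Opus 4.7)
The plan is to prove the three estimates in order of increasing technicality: \eqref{eq12.16} by a direct difference quotient argument, \eqref{eq12.17} by a Campanato-type characterisation combined with a bootstrap, and \eqref{eq9.18} as a limiting case of \eqref{eq12.17}.

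For \eqref{eq12.16}, the only mixed semi-norm on the left that is not immediately dominated by the right-hand side is $[D_{x'}u]_{t,\delta/2;\,Q_{R/2}}$. Fixing $(t,x),(s,x)\in Q_{R/2}$ and an index $1\le i\le q$, I would set $h:=\sqrt{|t-s|}\le R/2$ and use the three-point splitting
\[
D_iu(t,x)-D_iu(s,x)=[A(t)-A(s)]+h^{-1}\bigl\{[u(t,x+he_i)-u(s,x+he_i)]-[u(t,x)-u(s,x)]\bigr\},
\]
where $A(\sigma):=D_iu(\sigma,x)-h^{-1}[u(\sigma,x+he_i)-u(\sigma,x)]$. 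The mean value theorem gives $|A(\sigma)|\le[u]_{x',1+\delta;\,Q_R}h^{\delta}$, and the bracketed contribution is bounded by $2h^{-1}[u]_{t,(1+\delta)/2;\,Q_R}|t-s|^{(1+\delta)/2}=2[u]_{t,(1+\delta)/2;\,Q_R}h^{\delta}$. Dividing by $|t-s|^{\delta/2}=h^{\delta}$ then yields \eqref{eq12.16}.

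For \eqref{eq12.17} the genuinely new quantities to estimate are the cross semi-norms $M_1:=[u_t]_{x',\delta;\,Q_{R/2}}$ and $M_2:=[D_{x'}^2u]_{t,\delta/2;\,Q_{R/2}}$. My strategy is to establish, for each $(t_0,x_0',x_0'')\in Q_{R/2}$ and each sufficiently small parabolic radius, the Campanato-type pointwise bound
\[
|u(t,x',x_0'')-P_{t_0,x_0'}(t,x')|\le N\rho^{2+\delta},\qquad \rho:=\sqrt{|t-t_0|}+|x'-x_0'|,
\]
where $P_{t_0,x_0'}$ is the parabolic Taylor polynomial assembled from $u,u_t,D_{x'}u,D_{x'}^2u$ evaluated at $(t_0,x_0',x_0'')$ and $N$ is proportional to the right-hand side of \eqref{eq12.17}. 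Inserting $u(t,x_0',x_0'')$ and Taylor-expanding separately in $t$ and $x'$ splits $u-P$ into four pieces: two pure Taylor errors $\Pi_1,\Pi_2$ bounded directly by $\rho^{2+\delta}[u]_{t,1+\delta/2;\,Q_R}$ and $N[u]_{x',2+\delta;\,Q_R}\rho^{2+\delta}$, respectively, and two cross terms
\[
\Pi_3=-[D_{x'}u(t,x_0',x_0'')-D_{x'}u(t_0,x_0',x_0'')](x'-x_0'), \quad \Pi_4=-\tfrac12(x'-x_0')^\top[D_{x'}^2u(t,x_0',x_0'')-D_{x'}^2u(t_0,x_0',x_0'')](x'-x_0').
\]
I would bound $\Pi_3$ using \eqref{eq12.16} applied to $u$ after first controlling $|D_{x'}u|_{0;\,Q_R}$, $|D_{x'}^2u|_{0;\,Q_R}$, $|u_t|_{0;\,Q_R}$ by standard parabolic interpolations in terms of $[u]_{x',2+\delta;\,Q_R}$, $[u]_{t,1+\delta/2;\,Q_R}$, and $R^{-2-\delta}|u|_{0;\,Q_R}$, and bound $\Pi_4$ via $M_2$ itself. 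The hard part is the circularity this creates: the cross terms reintroduce $M_1$ and $M_2$ on the right. The resolution is a Campanato iteration across nested parabolic boxes, which produces a small prefactor on $M_1+M_2$ and permits absorption on the left; the lower-order term $R^{-2-\delta}|u|_{0;\,Q_R}$ in \eqref{eq12.17} is precisely the imprint of this iteration.

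Finally, \eqref{eq9.18} will follow from \eqref{eq12.17} by centering a parabolic cylinder $Q_R(t_0,x_0)\subset\bR_0^{d+1}$ at an arbitrary point and letting $R\to\infty$: the hypothesis $u\in L_\infty(\bR_0^{d+1})$ forces $R^{-2-\delta}|u|_{0;\,Q_R(t_0,x_0)}\to0$, while the two H\"older semi-norms on the right remain bounded by their global counterparts, and the arbitrariness of $(t_0,x_0)$ yields the full semi-norm on $\bR_0^{d+1}$.
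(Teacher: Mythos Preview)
Your argument for \eqref{eq12.16} is correct and is essentially the paper's own proof: both reduce to bounding $[D_{x'}u]_{t,\delta/2}$ by replacing $D_{x'}u$ with a finite difference of step $h\approx\sqrt{|t-s|}$ and splitting the error exactly as you describe. Likewise, your derivation of \eqref{eq9.18} from \eqref{eq12.17} by sending $R\to\infty$ matches the paper.

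The gap is in your plan for \eqref{eq12.17}. The circularity you flag is real, but the resolution you sketch does not actually produce a small prefactor. In your Campanato bound $|u-P|\le N\rho^{2+\delta}$ the constant $N$ carries $M_1,M_2$ with coefficient of order one: for instance $|\Pi_4|\le \tfrac12 M_2|t-t_0|^{\delta/2}|x'-x_0'|^2\le \tfrac12 M_2\rho^{2+\delta}$, and for $\Pi_3$ you need $[D_{x'}u]_{t,(1+\delta)/2}$ (not $[D_{x'}u]_{t,\delta/2}$, which is all \eqref{eq12.16} supplies), and this quantity is again one of the unknowns. Passing through nested cylinders does not help, because the Campanato characterisation converts the pointwise bound into $M_1+M_2\le N\cdot(\text{good terms}+M_1+M_2)$ with the same $N$ regardless of scale; there is no parameter analogous to the $\kappa$ in the paper's proof of Theorem~\ref{thm1} that one can tune to make the cross coefficient small.

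The paper avoids this circularity altogether by working with \emph{higher-order} finite differences. Writing $\Delta_{\ell,r}$ for the spatial first difference and $\delta_{t,r^2}$ for the temporal one, the paper shows directly (by commuting the two and using Taylor's formula) that
\[
|\delta_{t,r^2}^2 D_{x'}^2 u|\le Nr^{\delta}\bigl([u]_{x',2+\delta}+[u]_{t,1+\delta/2}\bigr),
\qquad
|\Delta_{\ell,r}^3 u_t|\le Nr^{\delta}\bigl([u]_{x',2+\delta}+[u]_{t,1+\delta/2}\bigr),
\]
with only the good semi-norms on the right. The passage from these second/third-order difference bounds back to first-order H\"older control uses the elementary identities $2\delta_{t,r^2}=\delta_{t,2r^2}-\delta_{t,2r^2}^2$ and $3\Delta_{\ell,r}=\Delta_{\ell,3r}-\Delta_{\ell,r}^3$ (after a shift), which one iterates dyadically/triadically; the iteration introduces exactly the lower-order term $|u|_{0;Q_R}$ appearing in \eqref{eq12.17}. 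This finite-difference route sidesteps the absorption issue entirely and is the missing idea in your proposal.
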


\begin{proof}
We use the method of finite-difference approximations.
By scaling and mollifications, without loss of generality, we may assume that $R=1$ and $u$ is infinitely differentiable in $Q_1$ with bounded derivatives.
For any unit vector $\ell\in \bR^d$ and $r>0$, we define
\[
\Delta_{\ell,r}u(t,x)=u(t,x+r\ell)-u(t,x).
\]
Also, for any $r>0$, we define
\[
\delta_{t,r}u(t,x)=u(t,x)-u(t-r,x).
\]

To show \eqref{eq12.16}, it then suffices to prove
\begin{equation}
                            \label{eq12.30}
[D_{x'}u]_{t,\delta/2;\,Q_1}\le N\left([u]_{x',1+\delta;\,Q_1}+[u]_{t,(1+\delta)/2; \,Q_1}\right).
\end{equation}
For any $(t_1,x),(t_2,x)\in Q_{1/2}$ such that $-1<t_2<t_1<0$, we denote
\[
r=\sqrt{t_1-t_2}/2\in (0,1/2).
\]
Using Taylor's formula, for any unit vector $\ell=(\ell',\ell'')\in \bR^d$ such that $\ell''=0$, we have
\[
\abs{D_{\ell}u(t_j,x)-\Delta_{\ell,r}u(t_j,x)/r}\le r^\delta[D_{x'}u]_{x',\delta; \,Q_1}
\]
for $j=1,2$, and
\begin{multline*}
\abs{\Delta_{\ell,r}u(t_1,x)/r -\Delta_{\ell,r}u(t_2,x)/r}\\
=r^{-1}\abs{(u(t_1,x+r\ell)-u(t_{2},x+r\ell))-(u(t_1,x)-u(t_{2},x))}
\le 2^{2+\delta}r^\delta[u]_{t,(1+\delta)/2; \,Q_1}.
\end{multline*}
Combining the two inequalities above, we get
\[
\abs{D_{\ell}u(t_1,x)-D_{\ell}u(t_2,x)}\le 2r^\delta[D_{x'}u]_{x',\delta;\,Q_1}+2^{2+\delta}r^\delta[u]_{t,(1+\delta)/2;\,Q_1},
\]
which yields \eqref{eq12.30}.

For \eqref{eq12.17}, it suffices to show
\begin{multline}
                    \label{eq1.37}
[D_{x'}^2u]_{t,\delta/2;\,Q_{1/2}}+[u_t]_{x',\delta;\,Q_{1/2}}+[D_{x'}u]_{t,(1+\delta)/2;\,Q_{1/2}} \\
\le N\left([u]_{x',2+\delta;\,Q_1}+[u]_{t,1+\delta/2;\,Q_1}+\abs{u}_{0;\,Q_1}\right).
\end{multline}
Let $(t,x)\in Q_{1/2}$, $r\in (0,1/8)$, and $\ell=(\ell',\ell'')\in \bR^d$ be a unit vector such that $\ell''=0$.
By using Taylor's formula, we have
\begin{multline*}
\abs{\delta_{t,r^2}^2 D_{\ell}^2u(t,x)-r^{-2}\delta_{t,r^2}^2\Delta_{\ell,r}^2u(t,x)}
=\abs{\delta_{t,r^2}^2(D_{\ell}^2u(t,x)-r^{-2}\Delta_{\ell,r}^2u(t,x))}\\
\le 4\abs{D_{\ell}^2u-r^{-2}\Delta_{\ell,r}^2u}_{0;\,Q_{1/2}} \le 4r^\delta[D_{x'}^2u]_{x',\delta;\,Q_1}
\end{multline*}
and
\[
r^{-2}\abs{\delta_{t,r^2}^2\Delta_{\ell,r}^2u(t,x)} =r^{-2}\abs{\Delta_{\ell,r}^2\delta_{t,r^2}^2u(t,x)}
\le 4r^{-2}\abs{\delta_{t,r^2}^2 u}_{0;\,Q_{1/2}}
\le 4r^\delta[u_t]_{t,\delta/2;\,Q_1}.
\]
Combining the two inequalities above, we get
\begin{equation}
                                \label{eq9.47}
\abs{\delta_{t,r^2}^2D_{\ell}^2u(t,x)}\le 4r^\delta\left( [D_{x'}^2u]_{x',\delta;\,Q_1}+[u_t]_{t,\delta/2;\,Q_1} \right).
\end{equation}
Using the simple identity
\[
2\delta_{t,r^2}=\delta_{t,2r^2}-\delta^2_{t,2r^2},
\]
we deduce from \eqref{eq9.47} by an iteration that
\begin{align}
                                \label{eq9.49}
[D_{x'}^2u]_{t,\delta/2;\,Q_{1/2}}&\le N\left( [D_{x'}^2u]_{x',\delta;\,Q_1}+[u_t]_{t,\delta/2;\,Q_1}+\abs{D_{x'}^2 u}_{0;\,Q_{3/4}} \right)\nonumber\\
&\le N\left([D_{x'}^2u]_{x',\delta;\,Q_1}+[u_t]_{t,\delta/2;\,Q_1}
+\abs{u}_{0;\,Q_{1}}\right),
\end{align}
where in the last inequality we used the classical interpolation inequality with respect to $x'$.
Similarly, we have
\[
\abs{\Delta_{\ell,r}^3u_t(t,x)-r^{-2}\Delta_{\ell,r}^3 \delta_{t,r^2}u(t,x)}
\le 8r^\delta[u_t]_{t,\delta/2;\,Q_1}
\]
and
\begin{align*}
r^{-2}\abs{\Delta_{\ell,r}^3\delta_{t,r^2}u(t,x)}=r^{-2}\abs{\delta_{t,r^2}\Delta_{\ell,r}^3u(t,x)}
\le 2r^\delta[D_{x'}^2u]_{x',\delta;\,Q_1},
\end{align*}
which yields
\begin{equation}
                        \label{eq10.04}
\abs{\Delta_{\ell,r}^3u_t(t,x)}
\le 8r^\delta[u_t]_{t,\delta/2;\,Q_1}+2r^\delta[D_{x'}^2u]_{x',\delta;\,Q_1}.
\end{equation}
Using the identity
\[
3\Delta_{\ell,r}u(t,x)=\Delta_{\ell,3r}u(t,x-\ell r)-\Delta^3_{\ell,r}u(t,x-\ell r),
\]
we deduce from \eqref{eq10.04} by an iteration that
\begin{align}
                                \label{eq10.12}
[u_t]_{x,\delta;\,Q_{1/2}}&\le N\left([D_{x'}^2u]_{x',\delta;\,Q_1}+[u_t]_{t,\delta/2;\,Q_1}
+\abs{u_t}_{0;\,Q_{3/4}}\right)\nonumber\\
&\le N\left([D_{x'}^2u]_{x',\delta;\,Q_1}+[u_t]_{t,\delta/2;\,Q_1}
+\abs{u}_{0;\,Q_{3/4}}\right).
\end{align}
where in the last inequality we used the classical interpolation inequality with respect to $t$.
Moreover, combining
\[
\abs{\delta_{t,r^2}^2 D_{\ell}u(t,x)
-(2r)^{-1}\delta^2_{t,r^2}\Delta_{\ell,2r}u(t,x-r\ell)}\le Nr^{1+\delta}[D_{x'}^2u]_{x',\delta;\,Q_1}
\]
and
\[
r^{-1}\abs{\delta^2_{t,r^2}\Delta_{\ell,2r}u(t,x-r\ell)}
=r^{-1}\abs{\Delta_{\ell,2r}\delta^2_{t,r^2} u(t,x-r\ell)}\le Nr^{1+\delta}[u_t]_{t,\delta/2;\,Q_1},
\]
we get
\begin{equation}
                                \label{eq9.47b}
\abs{\delta_{t,r^2}^2 D_{\ell}u(t,x)}\le Nr^{1+\delta}\left([D_{x'}^2u]_{x',\delta;\,Q_1}+[u_t]_{t,\delta/2;\,Q_1}\right).
\end{equation}
Similar to \eqref{eq9.49} and \eqref{eq10.12}, we deduce from \eqref{eq9.47b} that
\begin{align}
                               \label{eq11.20}
[D_{x'}u]_{t,(1+\delta)/2;\,Q_{1/2}}&\le N\left([D_{x'}^2u]_{x',\delta;\,Q_1}+[u_t]_{t,\delta/2;\,Q_1}+
\abs{D_{x'}u}_{0;\,Q_{3/4}}\right)\nonumber\\
&\le N\left([D_{x'}^2u]_{x',\delta;\,Q_1}+[u_t]_{t,\delta/2;\,Q_1}+
\abs{u}_{0;\,Q_{1}}\right).
\end{align}
From \eqref{eq9.49}, \eqref{eq10.12}, and \eqref{eq11.20}, we reach \eqref{eq1.37}, and thus \eqref{eq12.17}.
Finally, \eqref{eq9.18} follows from \eqref{eq12.17} by sending $R\to \infty$.
\end{proof}

\begin{remark}
From the proof it is easily seen that \eqref{eq12.16} still holds when $\delta=1$.
In general, estimate \eqref{eq12.17} is not true without the lower-order term $R^{-2-\delta} \abs{u}_{0;\,Q_1}$ on the right-hand side.
For instance, if $u=\abs{x'}^2 t$, then we have $[u]_{x',2+\delta;\,Q_R}=[u]_{t,1+\delta/2;\,Q_R}=0$, but $[u]_{z',1+\delta/2,2+\delta;\,Q_{R/2}}>0$.
\end{remark}

As we are not able to find an explicit reference in the literature, we also include the following version of Campanato's theorem for $p\in (0,1)$\footnote{We would like to thank Prof. Mikhail Safonov for the simplified proof of this lemma.}.
We only present the elliptic case, since the parabolic case is similar.

\begin{lemma}
                    \label{lemma6.14}
Let $p\in (0,1)$, $R_0\in (0,1/2)$, $\delta\in (0,1]$, and $u\in L_{p;\, loc}(B_1)$ be a function. Suppose that for any $x_0\in B_{1/2}$ and $r\in (0,R_0)$, we can find a constant $c_{x_0,r}$ such that
\begin{equation}
                                \label{eq3.11}
\fint_{B_r(x_0)}\,\Abs{u-c_{x_0,r}}^p\,dx\le M^pr^{p\delta}
\end{equation}
for some constant $M>0$ independent of $x_0$ and $r$. Then there exists a function $v\in C^\delta(B_{1/2})$ such that $u=v$ in $B_{1/2}$ a.e. and
\[
[v]_{\delta;B_{1/2}}\le N(\delta,p,R_0)\,M.
\]
\end{lemma}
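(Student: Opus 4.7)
The plan is to mimic the classical Campanato argument, replacing the usual Minkowski / triangle inequality (which fails for $p<1$) by the elementary subadditivity
\[
|a+b|^p \le |a|^p + |b|^p \quad (0<p<1).
\]
All constants below may depend on $d,\delta,p,R_0$ but not on $M$. Throughout I write $c(x_0,r):=c_{x_0,r}$.

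First I will establish a Cauchy-type bound on the centers. Given $x_0\in B_{1/2}$ and $0<r\le r'\le 2r<R_0$, I will integrate $|c(x_0,r)-c(x_0,r')|^p$ over $B_r(x_0)$ and apply the subadditivity inequality to the splitting through $u(x)$:
\[
|c(x_0,r)-c(x_0,r')|^p \le \fint_{B_r(x_0)}|u-c(x_0,r)|^p\,dx + \frac{|B_{r'}|}{|B_r|}\fint_{B_{r'}(x_0)}|u-c(x_0,r')|^p\,dx.
\]
By \eqref{eq3.11} the right side is bounded by $(1+2^{d+p\delta})M^p(r')^{p\delta}$, so
\[
|c(x_0,r)-c(x_0,r')|\le N M (r')^{\delta}.
\]
Applied to the dyadic sequence $r_k=2^{-k}R_0$ this shows $\{c(x_0,r_k)\}$ is Cauchy; define $v(x_0)$ as its limit. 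Summing the geometric series gives $|c(x_0,r_k)-v(x_0)|\le NM r_k^{\delta}$, and combining with the comparison for $r\in[r_{k+1},r_k]$ yields the crucial pointwise estimate
\begin{equation}		\label{eq:A1}
|v(x_0)-c(x_0,r)|\le N M r^{\delta}\quad\text{for every } x_0\in B_{1/2},\ r\in(0,R_0).
\end{equation}

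Next I will establish the H\"older estimate on $v$. For $x_1,x_2\in B_{1/2}$ with $\rho:=|x_1-x_2|<R_0/4$, set $r=2\rho$; since $B_\rho(x_1)\subset B_{r}(x_1)\cap B_{r}(x_2)$, the same trick gives
\[
|c(x_1,r)-c(x_2,r)|^p \le \frac{|B_r|}{|B_\rho|}\Bigl(\fint_{B_r(x_1)}|u-c(x_1,r)|^p + \fint_{B_r(x_2)}|u-c(x_2,r)|^p\Bigr)\le NM^p\rho^{p\delta}.
\]
Combined with \eqref{eq:A1} at both points and the triangle inequality, this produces $|v(x_1)-v(x_2)|\le NM\rho^{\delta}$ whenever $\rho<R_0/4$. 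For $\rho\ge R_0/4$ I will chain along the segment joining $x_1$ to $x_2$ through $O(1/R_0)$ intermediate points of mutual distance $<R_0/4$; since $\rho\ge R_0/4$, the resulting bound is still $N(d,\delta,p,R_0)M\rho^{\delta}$.

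Finally I will show $u=v$ a.e. in $B_{1/2}$. From \eqref{eq3.11}, \eqref{eq:A1}, and the subadditivity inequality,
\[
\fint_{B_r(x_0)}|u(x)-v(x_0)|^p\,dx\le \fint_{B_r(x_0)}|u-c(x_0,r)|^p\,dx+|c(x_0,r)-v(x_0)|^p\le NM^pr^{p\delta}\to 0,
\]
for every $x_0\in B_{1/2}$. Since $v$ is continuous (hence locally bounded) and $u\in L_{p;loc}(B_1)$, the function $|u-v|^p$ lies in $L^1_{loc}(B_{1/2})$; the Lebesgue differentiation theorem then gives $\fint_{B_r(x_0)}|u-v|^p\to|u(x_0)-v(x_0)|^p$ for a.e.\ $x_0$. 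Using $|u-v|^p\le|u-v(x_0)|^p+|v(x_0)-v|^p$ and the continuity of $v$ to pass the limit inside, we obtain $u(x_0)=v(x_0)$ a.e.

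The main technical obstacle I expect is in the first step: without Minkowski's inequality, I need to extract the full dyadic Cauchy estimate from \eqref{eq3.11} using only the comparison of two balls of comparable volume, and then patch the estimate from dyadic radii to all $r\in(0,R_0)$; once \eqref{eq:A1} is in hand, the H\"older continuity and the identification $u=v$ a.e.\ follow by essentially the same pattern applied to pairs of points rather than pairs of radii.
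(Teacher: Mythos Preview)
Your proof is correct and follows essentially the same Campanato-type argument as the paper: use the subadditivity $|a+b|^p\le |a|^p+|b|^p$ in place of the triangle inequality to compare the constants $c_{x_0,r}$ at comparable scales, define $v(x_0)$ as the limit, and then repeat the trick for pairs of points. Your identification step $u=v$ a.e.\ via Lebesgue differentiation applied directly to $|u-v|^p\in L^1_{loc}$ is in fact a little cleaner than the paper's route through rational approximation of $u(x_0)$, and your explicit chaining for $|x_1-x_2|\ge R_0/4$ fills in a step the paper leaves implicit.
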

\begin{proof}
For any $x_0\in B_{1/2}$, $r\in (0,R_0)$, and $r'\in [r/2,r)$, by the triangle inequality,
\[
\abs{c_{x_0,r}-c_{x_0,r'}}^p\le \bigl( \abs{u(x)-c_{x_0,r'}} + \abs{u(x)-c_{x_0,r}}\bigr)^p \le \abs{u(x)-c_{x_0,r'}}^p+\abs{u(x)-c_{x_0,r}}^p
\]
holds for any $x\in B_{r'}(x_0)$.
Taking the average with respect to $x\in B_{r'}(x_0)$, we get
\begin{equation}
                            \label{eq3.02}
\abs{c_{x_0,r}-c_{x_0,r'}}\le NMr^\delta.
\end{equation}
Therefore, for fixed $x_0\in B_{1/2}$, $c_{x_0,r}$ converges as $r\to 0$. We denote the limit to be $v(x_0)$.
Then again by \eqref{eq3.02} 
\[
\abs{c_{x_0,r}-v(x_0)}\le NM r^\delta.
\]
This and \eqref{eq3.11} imply that
\begin{equation}
                                \label{eq3.06}
\fint_{B_r(x_0)} \abs{u-v(x_0)}^p\,dx\le N M^p r^{p\delta}.
\end{equation}
Now for any $x_0,y_0\in B_{1/2}$ such that $\tau:=\abs{x_0-y_0}<R_0$, we have
\[
\abs{v(x_0)-v(y_0)}^p\le \bigl( \abs{u(x)-v(x_0)}+\abs{u(x)-v(y_0)}\bigr)^p
\le \abs{u(x)-v(x_0)}^p+\abs{u(x)-v(y_0)}^p.
\]
Taking the average with respect to $x\in B_\tau(x_0)\cap B_\tau(y_0)$, we easily get from \eqref{eq3.06} that
\[
\abs{v(x_0)-v(y_0)}\le NM\tau^\delta,
\]
which implies that $v\in C^\delta(B_{1/2})$ and $[v]_{\delta;\,B_{1/2}}\le NM$.

On the other hand, because $u\in L_{p;\, loc}(B_1)$, for any fixed constant $c$ and a.e. $x_0\in B_1$, by the Lebesgue differentiation theorem we have
\[
\fint_{B_r(x_0)} \abs{u-c}^p\,dx \to \abs{u(x_0)-c}^p\quad \text{as}\quad r\to 0.
\]
Therefore, for a.e. $x_0\in B_1$ and any $c\in \bQ$, we have
\[
\fint_{B_r(x_0)} \abs{u-c}^p\,dx \to \abs{u(x_0)-c}^p\quad \text{as}\quad r\to 0.
\]
Next, for any such $x_0$, we take a sequence of rational number $\set{c_k}$ which converges to $u(x_0)$.
Since
\[
\fint_{B_r(x_0)} \abs{u-u(x_0)}^p\,dx \le \fint_{B_r(x_0)} \abs{u-c_k}^p\,dx+\abs{c_k-u(x_0)}^p,
\]
letting $k$ sufficiently large and then $r$ sufficiently small, we see that
\[
\fint_{B_r(x_0)} \abs{u-u(x_0)}^p \,dx \to 0\quad \text{as}\quad r\to 0.
\]
Therefore, $v=u$ a.e. in $B_{1/2}$ .
The lemma is proved.
\end{proof}

\begin{remark}
It is worth noting that when $p\in (0,1)$, in general the following inequalities, which may seem very plausible, actually {\em do not} hold:
\begin{gather*}
\int_{B_r} \abs{u(x)-(u)_{B_r}}^p\,dx\le N\int_{B_r} \abs{u(x)}^p\,dx,\\
\int_{B_r} \abs{u(x)-(u)_{B_r}}^p\,dx\le N\int_{B_{2r}} \abs{u(x)-(u)_{B_{2r}}}^p\,dx.
\end{gather*}
In fact, one can easily construct a function such that $\int_{B_{2r}} \abs{u(x)}^p\,dx\le 1$, $(u)_{B_{2r}}=0$, but $(u)_{B_r}$ is as large as we want.
\end{remark}

\begin{acknowledgment}
The authors would like to thank the referees for reading of the manuscript and many useful comments.
\end{acknowledgment}


\end{document}